\newcommand{\confrac}[2]{%
  \frac{\displaystyle{%
    \strut\hfill{#1}\hfill\;\vrule}}%
      {\displaystyle{%
       \strut\vrule\;\hfill{#2}\hfill}}}%
    \newcommand\contFrac{\@ifstar{\@contFracStar}{\@contFracNoStar}}
   \def\singleContFrac#1#2{%
        \begin{array}{@{}c@{}}%
            \multicolumn{1}{c|}{#1}%
            \\%
            \hline%
           \multicolumn{1}{|c}{#2}%
        \end{array}%
   }
    \def\@contFracNoStar#1{%
        \mathchoice{
            \@contFracNoStarDisplay@#1//\@nil%
        }{
            \@contFracNoStarInline@#1//\@nil%
        }{
            \@contFracNoStarInline@#1//\@nil%
        }{
            \@contFracNoStarInline@#1//\@nil%
        }%
    }
    \def\@contFracNoStarDisplay@#1//#2\@nil{%
        \@ifmtarg{#2}{%
            #1%
        }{%
            #1+\cfrac{1}{\@contFracNoStarDisplay@#2\@nil}%
        }%
    }
        \def\@contFracNoStarInline@#1//#2\@nil{%
            \@ifmtarg{#2}{%
                #1%
            }{%
                #1 \@@contFracNoStarInline@@#2\@nil%
            }%
        }
        \def\@@contFracNoStarInline@@#1//#2\@nil{%
            \@ifmtarg{#2}{%
                + \singleContFrac{1}{#1}%
            }{%
                + \singleContFrac{1}{#1} \@@contFracNoStarInline@@#2\@nil%
            }%
        }
    \def\@contFracStar#1{%
        \mathchoice{
            \@contFracStarDisplay@#1////\@nil%
        }{
            \@contFracStarInline@#1//\@nil%
        }{
            \@contFracStarInline@#1//\@nil%
        }{
            \@contFracStarInline@#1//\@nil%
        }%
    }
    \def\@contFracStarDisplay@#1//#2//#3\@nil{%
        \@ifmtarg{#2}{%
            #1%
        }{%
            #1 + \cfrac{#2}{\@contFracStarDisplay@#3\@nil}%
        }%
    }
        \def\@contFracStarInline@#1//#2\@nil{%
            \@ifmtarg{#2}{%
                #1%
            }{%
                #1 \@@contFracStarInline@@#2\@nil%
            }%
        }
        \def\@@contFracStarInline@@#1//#2//#3\@nil{%
            \@ifmtarg{#3}{%
                + \singleContFrac{#1}{#2}%
            }{%
                + \singleContFrac{#1}{#2} \@@contFracStarInline@@#3\@nil%
            }%
        }
       \numberwithin{equation}{section}
\theoremstyle{plain}
\newtheorem{thm}{Theorem}[section]
\newtheorem{lem}[thm]{Lemma}
\newtheorem{cor}[thm]{Corollary}
\newtheorem{pro}[thm]{Proposition}
\theoremstyle{definition}
\newtheorem{rem}[thm]{Remark}
\newtheorem*{prf*}{Proof}
\newtheorem*{pf*}{}
\newtheorem*{lem*}{LemmaA}
\newtheorem*{lm*}{LemmaB}
\title[Density combinatorics theorems in continued fractions]
{Density combinatorics theorems
 in\\ fractal dimension theory of continued fractions }
\author{Yuto Nakajima and Hiroki Takahasi}
\address{Faculty of Science and Engineering, Doshisha University, Kyoto, 610-0394, JAPAN}
\email{yunakaji@mail.doshisha.ac.jp}
\address{Keio Institute of Pure and Applied Sciences (KiPAS),  Department of Mathematics, Keio University, Yokohama, 223-8522, JAPAN}  \email{hiroki@math.keio.ac.jp}
\subjclass[2020]{11A55, 11K50, 28A80}
\thanks{{\it Keywords}: 
arithmetic progression; continued fraction; Hausdorff dimension}
\begin{document}

\begin{abstract} 
We build a bridge from 
density combinatorics 
to fractal dimension theory of continued fractions.
We establish a fractal transference principle that  transfers common properties of subsets of $\mathbb N$ with positive upper 
density to properties of subsets of irrationals in $(0,1)$
for which the set $\{a_n(x)\colon n\in\mathbb N\}$ of partial quotients  induces an injection $n\in\mathbb N\mapsto a_n(x)\in\mathbb N$.
Let $(*)$ be a certain property that holds for any subset of $\mathbb N$ with positive upper 
density.  The principle asserts that for any $S\subset\mathbb N$ with positive upper 
density, there exists a set $E_S\subset(0,1)$ of Hausdorff dimension $1/2$ such that the set   $\bigcup_{n\in\mathbb N}\bigcap_{x\in E_S}\{a_n(x)\}\cap S$ has the same upper density as that of $S$, and thus inherits property $(*)$.
Examples of $(*)$
include the existence of arithmetic progressions of arbitrary lengths and the existence of arbitrary polynomial progressions, known as Szemer\'edi's and Bergelson-Leibman's theorems respectively.
In the same spirit,
we establish a relativized version of the principle applicable to
 the primes, to the primes of the form $y^2+z^2+1$ with $y$, $z\in\mathbb N$, to the sets given by the Piatetski-Shapiro sequences.
\end{abstract}

\maketitle

\section{Introduction}\label{intro}

Problems on the existence of arithmetic progressions  lie at the interface of a number of fields, including 
combinatorics, number theory, ergodic theory and dynamical systems. 
 Van der Waerden’s theorem
 \cite{Wae27}
 states that if the set $\mathbb N$ of positive integers is partitioned into finitely many sets, then one of these sets contains 
 an $\ell$-term arithmetic progression for every $\ell\geq3$.
A significant strengthening of Van der Waerden's theorem was obtained by
  Szemer\'edi \cite{S75} 
  who proved 
that any subset of $\mathbb N$ with positive upper density contains an $\ell$-term arithmetic progression for every $\ell\geq3$.
For a set $S\subset\mathbb N$, define its
 {\it upper density} 
 by \[\overline{d}(S)=\limsup_{N\to \infty}\frac{\#({S\cap[1,N]})}{N},\]
and its
 {\it upper Banach density} 
by
\[d_{\rm B}(S)=\limsup_{N\to \infty}\frac{\sup_{M\in \mathbb N}\#({S\cap[M,M+N]})}{N}.\]
Note that $d_{\rm B}(S)\geq \overline{d}(S)$.
The positive upper density assumption in Szemer\'edi's theorem can actually be weakened to the positivity of upper Banach density.
Furstenberg \cite{Fur77,Fur81} established the multiple recurrence theorem and the correspondence principle, linking the field of density combinatorics to recurrence in ergodic theory.
Using this principle he obtained a new proof of Szemer\'edi’s theorem. This 
development laid a foundation for further extensions of
Szemeredi's theorem to several different directions, see \cite{BL,FK78,GT08,SY19,SP19,TZ}
for example.
The aim of this paper is to provide counterparts of these 
 density combinatorics theorems
in the context of fractal dimension theory of continued fractions.

Each irrational number $x$ in the interval $(0,1)$
   has the unique infinite
 {\it regular continued fraction expansion} 
\begin{equation}\label{RCF}x=\confrac{1 }{a_{1}(x)} + \confrac{1 }{a_{2}(x)}+ \confrac{1 }{a_{3}(x)}  +\cdots,\end{equation}
where each positive integer $a_n(x)$, $n\in\mathbb N$ is called a partial quotient or simply 
a {\it digit} of $x$. 
We analyze the structure of the set of digits 
 as an infinite subset of $\mathbb N$. For this purpose, overlapping digits do not count and we may restrict to the set of points whose digits are all different:
\[E=\{x\in (0,1)\setminus\mathbb Q\colon a_{m}(x)\neq a_{n }(x)  \ \text{for all } m,n\in\mathbb N\text{ with }  m\neq n\}.\]
An investigation of this set was suggested by Erd\H{o}s in the early stage of fractal dimension theory of continued fractions.
Ramharter \cite{R85} proved that 
$E$ is of Hausdorff dimension $1/2$.  Ramharter's result is a refinement of Good's 
landmark theorem \cite[Theorem~1]{G} 
which states that the set $\{x\in(0,1)\setminus\mathbb Q\colon a_n(x)\to\infty\text{ as }n\to\infty\}$ is of Hausdorff dimension $1/2$.

\subsection{Statements of results and their consequences}\label{state-sec}

We now state our two main results (Theorems~\ref{cor-FS} and \ref{cor-FS-R}), along with consequential theorems (Theorems~\ref{cor-N}, \ref{cor-prime}, \ref{cor-P1}, \ref{cor-PS} and \ref{lower-cor}) which are manifestations of the density combinatorics theorems 
\cite{BL,GT08,SY19,SP19,TZ}
in the regular continued fraction.
\subsubsection{First main result}
Let
$\dim_{\rm H}$ denote the Hausdorff dimension on $[0,1]$.

\begin{thm}[fractal transference principle]\label{cor-FS}
Let $S\subset\mathbb N$. 
\begin{itemize}
\item[(a)] If $\overline{d}(S)>0$, then there exists $E_S\subset E$ 
such that \begin{equation}\label{equation1}\dim_{\rm H}E_S=\dim_{\rm H}E=\frac{1}{2}\ \text{ and }\ \overline{d}\left(\bigcup_{n\in\mathbb N}\bigcap_{x\in E_S}\{a_n(x)\}\cap S\right)=\overline{d}(S).\end{equation} 
In particular, 
\begin{equation}\label{equation-0}\dim_{\rm H}\left\{x\in E\colon \overline{d}(\{a_n(x)\colon n\in\mathbb N\}\cap S)=\overline{d}(S)\right\}=\dim_{\rm H}E.\end{equation}
\item[(b)] If 
$d_{\rm B}(S)>0$, then
there exists $F_{S}\subset E$ 
such that \begin{equation}\label{equation1'}\dim_{\rm H}F_{S}=\dim_{\rm H}E\
\text{ and }\
d_{\rm B}\left(\bigcup_{n\in\mathbb N}\bigcap_{x\in F_S}\{a_n(x)\}\cap S\right)=d_{\rm B}(S).\end{equation}
In particular, 
\begin{equation}\label{equation-1}\dim_{\rm H}\left\{x\in E\colon d_{\rm B}(\{a_n(x)\colon n\in\mathbb N\}\cap S)=d_{\rm B}(S)\right\}=\dim_{\rm H}E.\end{equation}
\end{itemize}\end{thm}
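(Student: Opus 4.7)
The plan is to construct $E_S$ and $F_S$ by a single Cantor-like construction: we pin down the partial quotients at all but a sparse subsequence of indices so that the pinned values bijectively enumerate $S$, while at the remaining sparse positions the partial quotients range over long intervals of large integers disjoint from $S$. Since $S$ itself will then equal $\bigcup_n\bigcap_{x\in F_S}\{a_n(x)\}$, both density identities in \eqref{equation1} and \eqref{equation1'} drop out immediately; the ``in particular'' statements \eqref{equation-0} and \eqref{equation-1} follow at once because every $x\in E_S=F_S$ satisfies $\{a_n(x)\colon n\in\mathbb N\}\cap S=S$, placing $E_S$ inside the set on the left-hand side. Only the Hausdorff dimension estimate requires real work.

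Write $S=\{s_1<s_2<\cdots\}$; it is infinite because $d_{\rm B}(S)>0$ (a fortiori in case (a)). Choose inductively a strictly increasing sequence $(n_k)_{k\in\mathbb N}\subset\mathbb N$ and pairwise disjoint intervals $I_k\subset\mathbb N$ with $|I_k|=M_k$ and $\min I_k=L_k$, both $L_k,M_k\to\infty$, the $I_k$'s jointly disjoint from $S$ and sitting far above all previously chosen data. Let $\sigma\colon\mathbb N\setminus\{n_k\}_k\to\mathbb N$ be the order-preserving bijection and set
\[F_S:=E_S:=\bigl\{x\in(0,1)\setminus\mathbb Q\colon a_n(x)=s_{\sigma(n)}\text{ for }n\notin\{n_k\}_k,\ a_{n_k}(x)\in I_k\text{ for every }k\bigr\}.\]
Disjointness forces $F_S\subset E$; for $n\notin\{n_k\}_k$ we have $\bigcap_{x\in F_S}\{a_n(x)\}=\{s_{\sigma(n)}\}$, while for $n=n_k$ the intersection is empty once $M_k\geq 2$. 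Hence $\bigcup_n\bigcap_{x\in F_S}\{a_n(x)\}\cap S=\{s_{\sigma(n)}\colon n\notin\{n_k\}_k\}=S$, delivering both density equalities.

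The upper bound $\dim_{\rm H}F_S\leq 1/2$ is automatic from $F_S\subset E$ and Ramharter's theorem. For the lower bound, the standard continued-fraction denominator estimates give $q_{n_K}\asymp\prod_{n\leq n_K}a_n$ (with uniform constants, since all partial quotients are large) and a level-$n_K$ cylinder of diameter $\asymp (P_K Q_K)^{-2}$, where $P_K:=\prod_{n\leq n_K,\,n\notin\{n_j\}_j}s_{\sigma(n)}$ and $Q_K:=\prod_{k\leq K}a_{n_k}\in\bigl[\prod_{k\leq K}L_k,\prod_{k\leq K}(L_k+M_k)\bigr]$. There are $\prod_{k\leq K}M_k$ such cylinders, so assigning them uniform mass $1/\prod_{k\leq K}M_k$ and applying the mass distribution principle yields
\[\dim_{\rm H}F_S\geq\liminf_{K\to\infty}\frac{\log\prod_{k\leq K}M_k}{2\log P_K+2\log Q_K}.\]
Taking $M_k\asymp L_k$ and letting $L_k$ grow fast enough that $\log P_K=o\bigl(\log\prod_{k\leq K}M_k\bigr)$ forces this lim-inf to equal $1/2$, completing the proof.

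The main obstacle is precisely this dimension lower bound: because elements of $S$ can grow arbitrarily fast, the ``fixed-digit'' contribution $\log P_K$ is not \emph{a priori} controlled, and so at each inductive step one must pick $n_{k+1}$, $L_{k+1}$, and $M_{k+1}$ large enough to dwarf the accumulated fixed-digit logarithm. One must also check that the mass-distribution estimate holds uniformly across all small radii, not merely at the cylinder scales, by a standard interpolation between successive generations. These verifications are routine once the parameters are arranged, but the inductive bookkeeping — keeping all the $I_k$ disjoint from $S$ and from one another while respecting the growth conditions — is the main technical point.
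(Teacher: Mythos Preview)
There is a genuine gap, and it lies precisely in the step you flag as routine bookkeeping: keeping the $I_k$ disjoint from $S$ while letting $M_k\to\infty$ is in general impossible. If $S=2\mathbb N$ (so $\overline d(S)=\tfrac12$), then $\mathbb N\setminus S$ is the set of odd numbers and contains no interval of length two; if $S=\mathbb N$ (so $\overline d(S)=1$), there is nothing at all left for the free digits and your $F_S$ collapses to a single point. Relaxing ``interval'' to an arbitrary $M_k$-element subset of $[L_k,CL_k]\setminus S$ rescues the first example but not, say, $S=\mathbb N\setminus\{2^k:k\ge1\}$: here $\mathbb N\setminus S=\{2^k\}$ meets any window $[L,CL]$ in only $O(1)$ points, so the branching available inside $\mathbb N\setminus S$ is far too small relative to the cylinder scales to produce dimension $\tfrac12$. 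In short, the hypothesis $\overline d(S)>0$ tells you nothing about the richness of $\mathbb N\setminus S$, and your dimension argument needs that complement to be rich.

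The paper sidesteps this by decoupling the branching digits from $\mathbb N\setminus S$ entirely. It first extracts (Proposition~\ref{seed-Prop}) a subset $S_*\subset\mathbb N$ of \emph{zero} upper density that is nevertheless rich enough for the seed set $R_{t,L}(S_*)$ to have dimension $\tfrac12$; then it inserts, at sparse positions, only elements of $S\setminus S_*$ rather than all of $S$, and shows the resulting elimination map is almost Lipschitz (Proposition~\ref{holder-ex-lem1}), so no dimension is lost. Because $\overline d(S_*)=0$ one still has $\overline d(S\setminus S_*)=\overline d(S)$, which restores the density identity, while the branching digits live in $S_*$ and are entirely unconstrained by $S$. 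Your scheme can be repaired along the same lines --- take the free digits from such an $S_*$ and pin only $S\setminus S_*$ at the non-free positions --- but then the density equality is no longer automatic and requires the extra observation $\overline d(S\setminus S_*)=\overline d(S)$.
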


The second equation in \eqref{equation1} asserts that the set of elements of $S$ that appear in the sequence of partial quotients of any $x\in E_S$ in the same position is of upper density $\overline{d}(S)$. As a `pointwise' version of this, \eqref{equation-0} asserts that the condition
$\overline{d}(\{a_n(x)\colon n\in\mathbb N\}\cap S)=\overline{d}(S)$ does not cause a dimension drop in $E$. 

Equation \eqref{equation-0} allows us to 
 immediately transfer statements on subsets of $\mathbb N$ with positive upper density to statements on continued fractions of points in sets of Hausdorff dimension $1/2$. For example, 
 from \eqref{equation-0}
and Szemer\'edi's theorem it follows that if  $S\subset\mathbb N$ and $\overline{d}(S)>0$, then for any $x\in(0,1)\setminus\mathbb Q$ in a set of Hausdorff dimension $1/2$ and for every integer $\ell\geq3$, there exist $k(x)\in\mathbb N$, $m(x)\in\mathbb N$,
$n_1(x),n_2(x),\ldots,n_\ell(x)\in\mathbb N$
such that  
\[k(x)+m(x),k(x)+2m(x),\ldots,k(x)+\ell m(x)\in S,\] 
 and \[a_{n_1(x)}(x)=k(x)+m(x), a_{n_2(x)}(x)=k(x)+2m(x), \ldots,a_{n_\ell(x)}(x)=k(x)+\ell m(x).\] 
Equations \eqref{equation1} 
ensure that all the integers $k(x),m(x),n_1(x),n_2(x),\ldots,n_\ell(x)$ 
 are taken  uniformly on the set $E_S$ of Hausdorff dimension $1/2$.
 Moreover, a close inspection into Section~\ref{insert-sec} involving the construction of $E_S$ shows that  one can choose the indices
$n_1(x),n_2(x),\ldots,n_\ell(x)$ so that 
$n_1(x)<n_2(x)<\cdots<n_\ell(x)$.
Furthermore, from the uniformity in multiple recurrence (see \cite[Theorem~F2]{BHMF00}) and Furstenberg's correspondence principle, the common difference $m(x)$ $(x\in E_S)$ of the $\ell$-term arithmetic progression can be taken to be bounded from above by a universal constant that depends only on $\ell$ and $\overline{d}(S)$.
If $\overline{d}(S)=0$ and $d_{\rm B}(S)>0$, 
then similar statements follow from \eqref{equation1'} and \eqref{equation-1}.
 This uniformity of integers and the monotonicity of indices apply to Theorems~\ref{cor-N}, \ref{cor-prime}, \ref{cor-P1}, \ref{cor-PS} and \ref{lower-cor}. 
 \subsubsection{Polynomial progressions in $\mathbb N$}
We proceed to more examples of uniform transfer via Theorem~\ref{cor-FS}.
Bergelson and Leibman established a multiple recurrence theorem with polynomial times \cite[Theorem~${\rm A}_0$]{BL}, and as its corollary derived a density combinatorics theorem 
\cite[Theorem~${\rm B}_0$]{BL} which in particular asserts that any subset of $\mathbb Z$ with positive upper Banach density contains `arbitrary polynomial progressions'.
 Their theorem is a vast generalization of Szemer\'edi's theorem.
 For an application to the continued fraction,
 we need the following version of Bergelson-Leibman's theorem in $\mathbb N$. For completeness we include a proof in Appendix~\ref{BL-appendix}.
Let $\mathbb Z_0[X]$ denote the set of polynomials with integer coefficients that vanish at $X=0$.

\begin{thm}[cf. {\cite[Theorem~${\rm B}_0$]{BL}}]\label{BL-thm} 
If $S\subset\mathbb N$ and $d_{\rm B}(S)>0$, then for any finite collection $h_1,h_2\ldots,h_\ell$ of elements of $\mathbb Z_0[X]$ 
there exist $k\in\mathbb N$, $m\in\mathbb N$ such that 
\[k+h_1(m),k+h_2(m),\ldots,k+h_\ell(m)\in S.\]
\end{thm}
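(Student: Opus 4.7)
The plan is to reduce the statement to the classical Bergelson-Leibman theorem for subsets of $\mathbb Z$ (\cite[Theorem~${\rm B}_0$]{BL}), which asserts that if $T\subset\mathbb Z$ has positive upper Banach density in $\mathbb Z$ and $g_1,\ldots,g_r\in\mathbb Z_0[X]$, then there exist $k\in\mathbb Z$ and $m\in\mathbb N$ with $k+g_i(m)\in T$ for every $i$. Compared with this $\mathbb Z$-version, the statement to be proved involves two seemingly minor adjustments: the density hypothesis is given for $S\subset\mathbb N$ rather than $\mathbb Z$, and the integer $k$ must be positive rather than merely a signed integer.

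The first point is essentially tautological. If $S\subset\mathbb N$ and $d_{\rm B}(S)>0$, then regarding $S$ as a subset of $\mathbb Z$ it still has positive upper Banach density in $\mathbb Z$, because enlarging the range of the supremum from $M\in\mathbb N$ to $M\in\mathbb Z$ in the definition of $d_{\rm B}$ can only make the quantity larger. So the $\mathbb Z$-version of Bergelson-Leibman applies directly to $S$.

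The second point is handled by the following padding trick, which I expect to be the only genuinely non-routine step. I would apply the $\mathbb Z$-version not to the family $h_1,\ldots,h_\ell$ but to the augmented family $g_0:=0,\ g_1:=h_1,\ldots,g_\ell:=h_\ell$, observing that the zero polynomial belongs to $\mathbb Z_0[X]$. This produces $k\in\mathbb Z$ and $m\in\mathbb N$ such that $k+g_i(m)\in S$ for every $i\in\{0,1,\ldots,\ell\}$. The case $i=0$ specializes to $k\in S\subset\mathbb N$, so $k\in\mathbb N$ automatically, while the cases $i=1,\ldots,\ell$ are exactly the desired polynomial progression. Thus the positivity obstruction on $k$ is disposed of in a single line, and no new ergodic-theoretic machinery needs to be developed beyond the $\mathbb Z$-version taken as input.
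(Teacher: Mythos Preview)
Your proof is correct and takes a genuinely different, more economical route than the paper. The paper (Appendix~A) does not quote Theorem~$B_0$ of \cite{BL} as a black box; instead it re-runs Furstenberg's correspondence principle from scratch: it builds the orbit closure $\Sigma\subset\{0,1\}^{\mathbb Z}$ of the indicator sequence of $S$, extracts a shift-invariant measure $\mu$ with $\mu(\{y_0=1\})=d_{\rm B}(S)$, and then applies the ergodic-theoretic Theorem~$A_0$ (polynomial multiple recurrence) to conclude that $d_{\rm B}\bigl(S\cap(S-h_1(m))\cap\cdots\cap(S-h_\ell(m))\bigr)>0$ for infinitely many $m\in\mathbb N$. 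The positivity of $k$ then arises through exactly the same mechanism as your padding trick---membership of $k$ in $S$ itself forces $k\in\mathbb N$---but implemented at the level of the measure-theoretic intersection $A\cap\sigma^{-h_1(m)}(A)\cap\cdots\cap\sigma^{-h_\ell(m)}(A)$ rather than by explicitly augmenting the polynomial list before invoking a combinatorial black box. What your approach buys is brevity: a two-line reduction in place of a page of correspondence-principle bookkeeping. What the paper's approach buys is a slightly stronger output (positive Banach density of the set of admissible $k$'s, and this for infinitely many $m$) and self-containment relative to the ergodic input Theorem~$A_0$ alone.
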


 Transferring Theorem~\ref{BL-thm} to the continued fraction
via Theorem~\ref{cor-FS} we obtain the following statement. 
\begin{thm}\label{cor-N}
If $S\subset\mathbb N$ and
  $\overline{d}(S)>0$ (resp. 
  $d_{\rm B}(S)>0$), then for any finite collection $h_1,h_2,\ldots,h_\ell$ of elements of $\mathbb Z_0[X]$  there exist 
  $k\in\mathbb N$, $m\in\mathbb N$, $n_1,n_2\ldots,n_\ell\in\mathbb N$ such that  \[k+h_1(m),k+h_2(m),\ldots,k+h_\ell(m)\in S,\]
  and 
  for any $x\in E_S$ (resp. $x\in F_S$), \[a_{n_1}(x)=k+h_1(m),a_{n_2}(x)=k+h_2(m),\ldots,a_{n_\ell}(x)=k+h_\ell(m).\]
\end{thm}

\subsubsection{Second main result}
Theorem~\ref{cor-N} does not apply to subsets of $\mathbb N$ whose upper Banach density is zero. 
To accommodate such sets, notably the set of primes,
 we develop a relativized version of Theorem~\ref{cor-FS}.
Given two infinite sets $A\subset S\subset\mathbb N$, {\it the upper density of $A$ relative to $S$} is defined by
\[\overline{d}(A|S)=\limsup_{N\to\infty}\frac{\#( A\cap[1,N])}{\#(S\cap[1,N])}.\]
We say $S$ has {\it polynomial density} if there exist constants
 $C_1>0$, $C_2>0$ and $\alpha\geq1$, $\beta\geq0$ such that
for all sufficiently large $N>1$,
\[\frac{C_1N^{1/\alpha}}{(\log N)^{\beta }}\leq \#(S\cap[1,N])\leq \frac{C_2N^{1/\alpha}}{(\log N)^{\beta }}.\]
In this case we also say $S$ has {\it polynomial density with exponent} $\alpha$.
Our second main result is 
a relativized version of Theorem~\ref{cor-FS}.

\begin{thm}[relative fractal transference principle]\label{cor-FS-R}
Let $S$ be an infinite subset of $\mathbb N$ that has polynomial density with exponent $\alpha\geq1$.
For any $A\subset S$ with $\overline{d}(A|S)>0$, 
there exists a subset $E_{S,A}$ of $\{x\in E \colon \{a_n(x)\colon n\in\mathbb N\}\subset  S \}$ such that \begin{equation}\label{equation4}\dim_{\rm H}E_{S,A}=\dim_{\rm H}\{x\in E\colon \{a_n(x)\colon n\in\mathbb N\}\subset S\}=\frac{1}{2\alpha}\end{equation} and
\begin{equation}\label{equation5}\overline{d}\left(\bigcup_{n\in\mathbb N}\bigcap_{x\in E_{S,A}}\{a_n(x)\}\cap A|S\right)=\overline{d}(A|S).\end{equation}
In particular, 
\begin{equation}\label{equation6}\dim_{\rm H}\left\{
\begin{tabular}{l}
\!\!\!$x\in E\colon$\\
\!\!\!$\{a_n(x)\colon n\in\mathbb N\}\subset S$,\!\!\!\\
\!\!\!$\overline{d}(\{a_n(x)\colon n\in\mathbb N\}\cap A |S)$\!\!\!\\
\!\!\!$=\overline{d}(A|S)$\!\!\!
\end{tabular}
\right\}=\dim_{\rm H}
\left\{\begin{tabular}{l}
\!\!\!$x\in E\colon$\\
\!\!\!$\{a_n(x)\colon n\in\mathbb N\}\subset S$\!\!\!\end{tabular}
\right\}.\end{equation}
\end{thm}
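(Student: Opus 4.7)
My plan is to first establish the common dimension value $1/(2\alpha)$ in \eqref{equation4} and then construct $E_{S,A}$ with the advertised density property. For the upper bound in \eqref{equation4}, I would cover $\{x\in E\colon\{a_n(x)\colon n\in\mathbb N\}\subset S\}$ by the continued fraction cylinders $[c_1,\ldots,c_n]$ whose entries are pairwise distinct elements of $S$, use the standard estimate $|[c_1,\ldots,c_n]|\asymp q_n^{-2}\leq C\prod_{i=1}^n c_i^{-2}$, and invoke the polynomial density of $S$ to show that the $s$-dimensional Hausdorff series converges for every $s>1/(2\alpha)$. This extends the counting argument of Good and Ramharter. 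For the lower bound, I would build a Cantor set inside the ambient set: pick rapidly increasing scales $N_1<N_2<\cdots$, form the blocks $B_k=S\cap[N_k,N_{k+1})$ of sizes $\#B_k\asymp N_k^{1/\alpha}/(\log N_k)^{\beta}$, and retain those $x$ whose $k$-th digit lies in $B_k$ and is distinct from the preceding ones. Placing the uniform Bernoulli measure and applying the mass distribution principle, together with the polynomial density assumption, then gives dimension at least $1/(2\alpha)$.

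To construct $E_{S,A}$, enumerate $A=\{s_1<s_2<\cdots\}$ in increasing order (possible since $\overline{d}(A|S)>0$ forces $A$ to be infinite), choose a very sparse sequence of positions $n_1<n_2<\cdots$, and define $E_{S,A}$ to be the sub-Cantor set obtained by additionally imposing $a_{n_k}(x)=s_k$ for every $k\in\mathbb N$. Then $\bigcap_{x\in E_{S,A}}\{a_{n_k}(x)\}=\{s_k\}$ for each $k$, while for every $n\notin\{n_k\}_{k\in\mathbb N}$ the construction still permits at least two choices at position $n$, so the corresponding intersection is empty. Consequently,
\[
\bigcup_{n\in\mathbb N}\bigcap_{x\in E_{S,A}}\{a_n(x)\}\cap A=\{s_k\colon k\in\mathbb N\}=A,
\]
which yields \eqref{equation5}. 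The pointwise equation \eqref{equation6} then follows because each $x\in E_{S,A}$ satisfies $A\subset\{a_n(x)\colon n\in\mathbb N\}\subset S$, so $\{a_n(x)\colon n\in\mathbb N\}\cap A=A$ and the upper density relative to $S$ equals $\overline{d}(A|S)$; together with \eqref{equation4} and monotonicity of $\dim_{\rm H}$, this forces the two dimensions in \eqref{equation6} to coincide.

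The principal obstacle is to verify $\dim_{\rm H}E_{S,A}=1/(2\alpha)$. Each forcing $a_{n_k}(x)=s_k$ removes the branching factor $\#B_{n_k}$ at level $n_k$ and alters the contraction rate of the cylinder diameter at that step from $\asymp N_{n_k}^{-2}$ to $\asymp s_k^{-2}$. Choosing $(n_k)$ sparse enough that $\{n_k\}_{k\in\mathbb N}$ has zero asymptotic density, and arranging the scales so that $s_k\notin B_j$ for every $j\neq n_k$ (which preserves injectivity of digits), both modifications contribute a vanishing fraction to the sums entering the local-dimension formula. The mass distribution principle applied to the Bernoulli measure supported on $E_{S,A}$ then yields the same local-dimension lower bound $1/(2\alpha)$ as for the unforced Cantor set. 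This is the direct analogue, in the relative setting, of the sparse-insertion estimate underlying Theorem~\ref{cor-FS}.
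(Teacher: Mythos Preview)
Your proposal has a genuine gap at the step where you claim one can ``arrange the scales so that $s_k\notin B_j$ for every $j\neq n_k$''. Since the blocks $B_j=S\cap[N_j,N_{j+1})$ partition $S\cap[N_1,\infty)$ and $A\subset S$, every $s_k$ lies in exactly one block $B_{m_k}$; your condition forces $n_k=m_k$ and hence requires each block to contain at most one element of $A$. But for the base Cantor set to reach dimension $1/(2\alpha)$ you need $\#B_j\to\infty$ (in the natural choice $N_j\asymp t^j$ one has $\#B_j\asymp t^{j/\alpha}$), and then $\#(A\cap[1,N_{j+1}))\le j$ while $\#(S\cap[1,N_{j+1}))\gg j$, forcing $\overline{d}(A|S)=0$, a contradiction. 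The obvious repair of replacing $B_j$ by $B_j\setminus A$ at the free positions collapses entirely when $A=S$, a case the theorem must cover. In short, one cannot embed \emph{all} of $A$ into a seed construction whose digit blocks exhaust $S$ while staying inside $E$.

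The paper resolves exactly this obstruction with two devices missing from your sketch. First, the seed Cantor set is built not over $S$ but over an explicitly constructed subset $S_*\subset S$ with $\overline{d}(S_*|S)=0$ (the ``extreme seed set'' of Proposition~\ref{seed-Prop}); this makes $A\setminus S_*$ disjoint from all seed digits while retaining the full relative density $\overline{d}((A\setminus S_*)|S)=\overline{d}(A|S)$. Second, one does not force all of $A$ to appear: instead one \emph{inserts} (shifting the subsequent digits, rather than restricting a coordinate) finite batches $W_k\subset(A\setminus S_*)\cap I_k$ at positions $M_k$, with the intervals $I_k$ chosen so that the inserted digits realise the limsup $\overline{d}(A|S)$ along the sequence $\max I_k$. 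That the insertion does not lower dimension is established via an almost-Lipschitz ``elimination map'' (Proposition~\ref{holder-ex-lem1}), which plays the role of your mass-distribution estimate. Your intuition that sparse modifications preserve dimension is correct; the substance of the proof lies in the combinatorics of \emph{which} elements of $A$ to add and \emph{where}, so as to avoid digit collisions while still achieving the target density.
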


Equation  \eqref{equation5} asserts that the set of elements of $A$ that appear in the sequence of partial quotients of any $x\in E_{S,A}$ in the same position is of relative upper density $\overline{d}(A|S)$. As a `pointwise' version of this,
\eqref{equation6} asserts that the condition
$\overline{d}(\{a_n(x)\colon n\in\mathbb N\}\cap A|S)=\overline{d}(A|S)$ does not cause a dimension drop in $E$.

Since $\mathbb N$ has polynomial density with $\alpha=1$, $\beta=0$, Theorem~\ref{cor-FS-R} includes Theorem~\ref{cor-FS}(a) as a special case. 
Equation \eqref{equation6} is a
 `pointwise' version of the first equality in \eqref{equation4}, and
allows us to
immediately transfer density combinatorics statements on subsets of $S$ to statements in fractal dimension theory of continued fractions. 
This transfer can be done uniformly on $E_{S,A}$ by virtue of \eqref{equation5}, 
as developed below.

\subsubsection{Polynomial progressions in primes}
The first application of Theorem~\ref{cor-FS-R} is to the set $\mathbb P$ of prime numbers. 
It is well-known (see e.g., \cite[Corollary~3.4]{MV07}) that the upper Banach density of $\mathbb P$ is zero.
Green and Tao \cite[Theorem~1.2]{GT08} proved that
any subset of $\mathbb P$ with positive upper relative density contains an $\ell$-term arithmetic progression for every $\ell\geq3$, which they call `Szemer\'edi's theorem in prime numbers'. Green-Tao's theorem is a corollary of the wide open conjecture of Erd\H{o}s and Tur\'an \cite{ET36} which states that any subset of $\mathbb N$  with divergent reciprocal sum contains arithmetic progressions of arbitrary lengths. Subsequently, Tao and Ziegler \cite{TZ} extended Green-Tao's theorem to polynomial progressions as follows.
\begin{thm}[{\cite[Theorem~1.3]{TZ}}]\label{TZ-thm} 
If $A\subset\mathbb P$ and $\overline{d}(A|\mathbb P)>0$, then
for any finite collection $h_1,h_2,\ldots,h_\ell$ of elements of $\mathbb Z_0[X]$ 
there exist $k\in\mathbb Z$, $m\in\mathbb N$ such that
\[k+h_1(m),k+h_2(m),\ldots,k+h_\ell(m)\in A.\]
\end{thm}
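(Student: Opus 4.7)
The plan is to follow the Green--Tao transference strategy, extended to polynomial configurations as in the work of Tao--Ziegler. The soft input is the polynomial Szemer\'edi theorem (Theorem~\ref{BL-thm}, in its $\mathbb Z$-version), which gives polynomial progressions inside any set of positive upper Banach density in a bounded, ``model'' environment. The hard part is transferring this dense-model statement to $A\subset\mathbb P$, which has Banach density zero. The mechanism is to find a pseudorandom majorant $\nu\colon[N]\to\mathbb R_{\geq 0}$ (a measure of mean close to $1$) and a function $f\colon[N]\to[0,\infty)$ with $f\leq\nu$ pointwise and $\int f\gtrsim\overline d(A|\mathbb P)$, such that $f$ is supported on (a rescaling of) $A$, and then to show that the polynomial count
\begin{equation*}
\Lambda_{h_1,\dots,h_\ell}(f,\dots,f)=\mathbb E_{k,m}\bigl[f(k+h_1(m))\cdots f(k+h_\ell(m))\bigr]
\end{equation*}
is bounded below by a positive quantity depending only on $\ell$, the $h_i$, and $\overline d(A|\mathbb P)$.

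First I would perform the \emph{$W$-trick}: fix $W=\prod_{p\leq w}p$ with $w=w(N)$ slowly tending to infinity, restrict to a residue class $b\pmod W$ with $(b,W)=1$ on which $A$ retains a positive relative density, and rescale. This removes local obstructions coming from small prime divisors and produces a bounded function $f_{b,W}$ on $[N']$ majorized by a properly normalized truncated von Mangoldt majorant $\nu_{b,W}$ built from a Goldston--Pintz--Y\i{}ld\i{}r\i{}m-type short divisor sum. The next step is to verify that $\nu_{b,W}$ is \emph{pseudorandom relative to the polynomial system $(h_1,\dots,h_\ell)$}, meaning it satisfies polynomial analogues of the Green--Tao linear-forms and correlation conditions along all configurations of the shape $k+h_i(m)$ and their ``dual'' configurations arising from a polynomial Gowers--Cauchy--Schwarz expansion.

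With the pseudorandom majorant in hand, I would deploy a \emph{polynomial generalized von Neumann theorem} to show that if a certain polynomial Gowers-type norm $\|\cdot\|_{U^{d}_{\text{poly}}}$ of $f_{b,W}-\mathbb E f_{b,W}$ is small, then $\Lambda_{h_1,\dots,h_\ell}(f_{b,W},\dots,f_{b,W})$ is close to the main term from the constant function $\mathbb E f_{b,W}$. Together with a \emph{dense model theorem} (in the style of Green--Tao, or the Reingold--Trevisan--Tulsiani--Vadhan refinement), this lets me replace $f_{b,W}$ by a bounded function $\tilde f$ on $[N']$ with the same integral, up to a small error in polynomial Gowers norms. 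Applying the polynomial Szemer\'edi theorem (Theorem~\ref{BL-thm}) to the level set of $\tilde f$ supplies a positive lower bound for $\Lambda_{h_1,\dots,h_\ell}(\tilde f,\dots,\tilde f)$, hence for $\Lambda_{h_1,\dots,h_\ell}(f_{b,W},\dots,f_{b,W})$. Unwinding the $W$-trick and letting $N\to\infty$ yields an actual nontrivial polynomial progression $k+h_1(m),\dots,k+h_\ell(m)$ inside $A$.

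The principal obstacle is the verification of the polynomial linear-forms and correlation conditions for $\nu_{b,W}$. For linear progressions (the Green--Tao case) these reduce to Goldston--Y\i{}ld\i{}r\i{}m-style estimates on sums of divisor functions over linear forms. For polynomial patterns one needs such estimates along systems $\{k+h_i(m)\}$ whose degrees interact in more subtle ways, and the ``polynomial forms'' that must be controlled involve moments that grow with $\deg h_i$ and $\ell$; producing the required asymptotics uniformly in the shift parameters is the technical core of Tao--Ziegler. A secondary obstacle is formulating the correct polynomial Gowers seminorm so that both the generalized von Neumann estimate and the polynomial extension of the Bergelson--Leibman theorem can be fed through the dense model machinery in a compatible way.
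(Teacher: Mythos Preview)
The paper does not prove this statement: Theorem~\ref{TZ-thm} is quoted verbatim from \cite{TZ} as an external input and is used as a black box, exactly as Theorems~\ref{SP-thm} and \ref{thm-SY} are. There is nothing to compare your argument against.

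That said, your outline is a faithful high-level summary of the actual Tao--Ziegler strategy in \cite{TZ}: the $W$-trick, the pseudorandom majorant built from a Goldston--Y\i ld\i r\i m sieve weight, the polynomial generalized von Neumann inequality, the dense model (transference) theorem, and the appeal to the polynomial Szemer\'edi theorem for the bounded model. As a sketch this is correct in spirit, but be aware that each of these ingredients in the polynomial setting---particularly the polynomial forms condition for $\nu$ and the PET-induction/concatenation machinery needed to run the generalized von Neumann argument---occupies dozens of pages in \cite{TZ}; your proposal identifies the obstacles accurately but does not resolve them. For the purposes of the present paper, none of this is needed: one simply invokes \cite[Theorem~1.3]{TZ} and feeds it into Theorem~\ref{cor-FS-R}.
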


From the prime number theorem it follows that $\mathbb P$ has polynomial density with $\alpha=\beta=1$.  
Transferring Theorem~\ref{TZ-thm} to the continued fraction via Theorem~\ref{cor-FS-R}  
we obtain the following statement.

\begin{thm}\label{cor-prime}
If $A\subset\mathbb P$ 
 and $\overline{d}(A|\mathbb P)>0$, then
for any finite collection $h_1,h_2,\ldots,h_\ell$ of elements of $\mathbb Z_0[X]$ there exist 
 $k\in\mathbb Z$, $m\in\mathbb N$, $n_1,n_2,\ldots,n_\ell\in\mathbb N$ such that \[k+h_1(m),k+h_2(m),\ldots,k+h_\ell(m)\in A,\]
 and for any $x\in E_{\mathbb P,A},$ \[a_{n_1}(x)=k+h_1(m),a_{n_2}(x)=k+h_2(m),\ldots,a_{n_\ell}(x)=k+h_\ell(m).\] 

\end{thm}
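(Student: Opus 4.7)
The plan is a direct two-step application of the machinery developed earlier. First I verify the hypotheses of Theorem~\ref{cor-FS-R} for $S=\mathbb{P}$: by the prime number theorem $\#(\mathbb{P}\cap[1,N])\sim N/\log N$, so $\mathbb{P}$ has polynomial density with exponent $\alpha=1$ (and $\beta=1$). Theorem~\ref{cor-FS-R}, applied with this $S$ and the given $A\subset\mathbb{P}$, then produces a set
\[
E_{\mathbb{P},A}\subset\{x\in E\colon\{a_n(x)\colon n\in\mathbb{N}\}\subset\mathbb{P}\}
\]
of Hausdorff dimension $1/2$ such that, writing $B:=\bigcup_{n\in\mathbb{N}}\bigcap_{x\in E_{\mathbb{P},A}}\{a_n(x)\}\cap A$, one has $\overline{d}(B\mid\mathbb{P})=\overline{d}(A\mid\mathbb{P})>0$.

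Next, since $B\subset A\subset\mathbb{P}$, the Tao--Ziegler theorem (Theorem~\ref{TZ-thm}) applies to $B$: for the prescribed polynomials $h_1,\ldots,h_\ell\in\mathbb{Z}_0[X]$, there exist $k\in\mathbb{Z}$ and $m\in\mathbb{N}$ such that $k+h_i(m)\in B$ for every $i\in\{1,\ldots,\ell\}$. Unpacking the definition of $B$, for each $i$ there is an index $n_i\in\mathbb{N}$ with $k+h_i(m)\in\bigcap_{x\in E_{\mathbb{P},A}}\{a_{n_i}(x)\}\cap A$. This is precisely the claimed conclusion: $k+h_i(m)\in A$, and $a_{n_i}(x)=k+h_i(m)$ holds \emph{uniformly} for all $x\in E_{\mathbb{P},A}$.

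Thus the proof reduces to a formal composition of two already-established results. There is no substantive obstacle: the only input not already cited is the polynomial-density estimate for $\mathbb{P}$, which is immediate from the prime number theorem, and all the genuinely difficult content is absorbed into Theorems~\ref{cor-FS-R} and~\ref{TZ-thm}. The main interest of the statement lies not in its proof but in what the combination achieves, namely a single dimension-$1/2$ fractal $E_{\mathbb{P},A}$ on which the arbitrary polynomial progression $k+h_1(m),\ldots,k+h_\ell(m)$ inside $A$ is realized as an $\ell$-tuple of partial quotients simultaneously at the same indices $n_1,\ldots,n_\ell$ for every point of $E_{\mathbb{P},A}$.
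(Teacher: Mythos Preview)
Your proof is correct and follows exactly the approach the paper indicates: the paper simply says ``Transferring Theorem~\ref{TZ-thm} to the continued fraction via Theorem~\ref{cor-FS-R},'' and you have faithfully unpacked that transfer by first invoking Theorem~\ref{cor-FS-R} (using the prime number theorem to verify polynomial density of $\mathbb{P}$ with $\alpha=\beta=1$) and then applying Tao--Ziegler to the set $B=\bigcup_{n}\bigcap_{x}\{a_n(x)\}\cap A$.
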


Taking $A=\mathbb P$ in Theorem~ \ref{cor-prime} yields the following statement.
\begin{cor}
We have
\[\dim_{\rm H}\left\{
\begin{tabular}{l}
\!\!\!$x\in E\colon \{a_n(x)\colon n\in\mathbb N\}$ is contained in $\mathbb P$ and contains \!\!\!\\
\!\!\!\! an $\ell$-term arithmetic progression for every $\ell\geq3$
\!\!\!\!
\end{tabular}
\right\}=\frac{1}{2}.\]
\end{cor}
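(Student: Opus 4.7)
The plan is to obtain this corollary as a direct instantiation of Theorem~\ref{cor-prime} with $A=\mathbb P$, combined with the dimension formula in Theorem~\ref{cor-FS-R}. By the prime number theorem, $\mathbb P$ has polynomial density with exponent $\alpha=1$ (and $\beta=1$), so Theorem~\ref{cor-FS-R} applies. Trivially $\overline{d}(\mathbb P|\mathbb P)=1>0$, so Theorem~\ref{cor-prime} produces a set $E_{\mathbb P,\mathbb P}\subset\{x\in E\colon \{a_n(x)\colon n\in\mathbb N\}\subset \mathbb P\}$ with $\dim_{\rm H}E_{\mathbb P,\mathbb P}=1/(2\alpha)=1/2$.

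For the lower bound, I would verify that every $x\in E_{\mathbb P,\mathbb P}$ already belongs to the set displayed in the corollary. Its digits are contained in $\mathbb P$ by construction. To produce arbitrarily long arithmetic progressions, for each integer $\ell\geq 3$ I would apply Theorem~\ref{cor-prime} with the linear polynomials $h_i(X)=iX$, $i=1,\ldots,\ell$. This yields $k_\ell\in\mathbb Z$, $m_\ell\in\mathbb N$ and indices $n_{1,\ell},\ldots,n_{\ell,\ell}\in\mathbb N$ such that $k_\ell+im_\ell\in\mathbb P$ for $i=1,\ldots,\ell$ and, crucially, $a_{n_{i,\ell}}(x)=k_\ell+im_\ell$ \emph{uniformly} for every $x\in E_{\mathbb P,\mathbb P}$. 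Hence $\{a_n(x)\colon n\in\mathbb N\}$ contains the $\ell$-term arithmetic progression $\{k_\ell+im_\ell\}_{i=1}^{\ell}$, and since $\ell$ was arbitrary, $x$ lies in the set whose dimension we are computing. This gives $\dim_{\rm H}\geq 1/2$.

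For the upper bound, I would simply note that the set in question is contained in $\{x\in E\colon \{a_n(x)\colon n\in\mathbb N\}\subset\mathbb P\}$, whose Hausdorff dimension equals $1/(2\alpha)=1/2$ by equation \eqref{equation4} of Theorem~\ref{cor-FS-R} applied with $S=\mathbb P$. Alternatively, one could use the even coarser containment in $E$ together with Ramharter's theorem $\dim_{\rm H}E=1/2$. Combining with the lower bound yields the equality $1/2$.

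There is essentially no real obstacle in this deduction; the entire content has been front-loaded into Theorem~\ref{cor-prime} and the polynomial-density exponent for $\mathbb P$. The only points that require a brief check are the polynomial-density exponent of $\mathbb P$ (immediate from the prime number theorem) and the simultaneous, $x$-uniform control of the arithmetic progressions, which is precisely the uniformity built into \eqref{equation5} and already harvested by Theorem~\ref{cor-prime}.
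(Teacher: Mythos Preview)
Your proposal is correct and follows exactly the route the paper takes: the corollary is obtained by specializing Theorem~\ref{cor-prime} to $A=\mathbb P$, using that $\mathbb P$ has polynomial density with exponent $\alpha=1$ so that $\dim_{\rm H}E_{\mathbb P,\mathbb P}=1/2$, and the upper bound is immediate from the containment in $\{x\in E\colon \{a_n(x)\colon n\in\mathbb N\}\subset\mathbb P\}$ (or in $E$). Your explicit verification of both bounds just spells out what the paper leaves implicit in the one-line remark preceding the corollary.
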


Studying the set of continued fractions with restrictions involving the set of prime numbers is not new, see \cite{DaSi24,GHWW,MW99,SZ23} for example. 
Mauldin and Urba\'nski \cite{MW99},
Das and Simmons \cite{DaSi24} investigated the set of irrationals in $(0,1)$ all whose regular continued fraction digits are prime numbers. Schindler and Zweim\"uller \cite{SZ23} investigated probabilistic laws governing the appearance of prime regular continued fraction digits for irrationals which are typical in the sense of the Lebesgue measure on $(0,1)$.
Gonz\'alez Robert et al. \cite{GHWW} computed the Lebesgue measure and the Hausdorff dimension of certain sets of irrationals in $(0,1)$ with infinitely many large prime digits.

\subsubsection{Arithmetic progressions in  primes of quadratic form }In this subsection
we give three more examples of application of Theorem~\ref{cor-FS-R}, one of which is still related to prime numbers.
Let $\mathbb P(1)$ denote the set of prime numbers represented by a quadratic form $x^2+y^2+1$, i.e.,
\[\mathbb P(1)=\{p\in\mathbb P\colon  p=x^2+y^2+1 \ \text{\rm for some }
x,y\in\mathbb N\}.\]
Linnik \cite{Lin60} proved that $\mathbb P(1)$ contains infinitely many primes. Motohashi \cite{Mo71} proved an upper bound
$\#(\mathbb P(1)\cap [1,N])\leq CN/(\log N)^{\frac{3}{2}}$, and Iwaniec \cite[Theorem~1]{I72} obtained a lower bound of the same order,
namely
$\mathbb P(1)$ has polynomial density with $\alpha=1$, $\beta=3/2$.
For more information on $\mathbb P(1)$, see \cite{Mat07,Wu98} for example.

Ter\"av\"ainen \cite{Tera18} proved that `almost every' even positive integer $n\not\equiv5,8$ mod $9$ can be written as the sum of two elements of $\mathbb P(1)$. As a by-product of a proof of this, he also proved that any subset of 
the set
$\{p\in\mathbb P\colon  p=x^2+y^2+1 \ \text{\rm for some coprime }
x,y\in\mathbb N\}$ 
with positive relative upper density contains infinitely many non-trivial $3$-term arithmetic progressions.
Sun and Pan \cite{SP19} extended Green-Tao's theorem to  $\mathbb P(1)$. 
\begin{thm}[{\cite[Theorem~1.1]{SP19}}]\label{SP-thm} 

If $A\subset\mathbb P(1)$ 
 and $\overline{d}(A|\mathbb P(1))>0$, 
then $A$ contains an $\ell$-term arithmetic progression for every $\ell\geq3$.
\end{thm}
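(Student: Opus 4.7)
The plan is to adapt the Green--Tao transference machinery, which proves the analogous relative Szemer\'edi theorem for $\mathbb{P}$, to the thinner set $\mathbb{P}(1)$. The overall scheme has three ingredients: construction of a pseudorandom majorant supported essentially on $\mathbb{P}(1)$, verification of the linear forms and correlation conditions for this majorant, and invocation of the relative Szemer\'edi theorem. Once these are in place, the existence of $\ell$-term arithmetic progressions in $A$ follows for each fixed $\ell\geq 3$.

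First I would pass to a finitary model on $\mathbb{Z}/N\mathbb{Z}$ for a large prime $N$. Using Iwaniec's lower bound $\#(\mathbb{P}(1)\cap[1,N])\gg N/(\log N)^{3/2}$, set $f_A(n):=c(\log N)^{3/2}\mathbf{1}_A(n)$ with a suitable constant $c$, so that the hypothesis $\overline{d}(A\mid\mathbb{P}(1))>0$ translates to $\mathbb{E}_{n\leq N}f_A(n)\geq\delta$ along a sequence $N\to\infty$ with $\delta>0$ independent of $N$. The goal then is to locate an arithmetic progression of length $\ell$ in the support of $f_A$. The key construction is an enveloping sieve $\nu\geq f_A$ that behaves pseudorandomly. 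Following the sieve-theoretic work of Linnik, Motohashi and Iwaniec, one detects primality via a truncated Selberg-type divisor sum $\Lambda_R(n)=\bigl(\sum_{d\mid n,\,d\leq R}\mu(d)\log(R/d)\bigr)^{2}$ with $R=N^{\eta}$, and detects the representability $n-1=x^{2}+y^{2}$ via the classical identity $r(m)=4\sum_{d\mid m}\chi_{-4}(d)$, where $\chi_{-4}$ is the non-trivial character modulo $4$. The product of these two building blocks, normalized to have mean $1+o(1)$, yields the candidate $\nu$.

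The principal obstacle, and the technical core of \cite{SP19}, is verifying the linear forms and correlation conditions for $\nu$ along the arithmetic progression pattern $\psi_j(x,d)=x+jd$, $j=0,1,\ldots,\ell-1$. These reduce to asymptotic evaluations of multiple divisor sums attached to the simultaneous representability of linear forms as primes \emph{and} as values of $X^{2}+Y^{2}+1$, in the spirit of Goldston--Yildirim. The extra binary quadratic constraint forces one to control character sums for $\chi_{-4}$ twisted by $\Lambda_R$-type weights uniformly over many moduli, where Bombieri--Vinogradov-type estimates for Gaussian integers and Siegel--Walfisz bounds are indispensable; keeping the error terms below the main term across all required linear patterns is where the bulk of the work lies. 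Once these conditions are established, the relative Szemer\'edi theorem of Green--Tao (or its streamlined version due to Conlon--Fox--Zhao) applies to the pair $(f_A,\nu)$ and extracts the desired $\ell$-term progression inside $A$.
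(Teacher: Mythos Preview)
The paper does not prove this statement at all: Theorem~\ref{SP-thm} is quoted verbatim from Sun--Pan \cite{SP19} and is used purely as a black box. In the paper it plays the role of the density-combinatorics input that is \emph{transferred} via Theorem~\ref{cor-FS-R} to obtain Theorem~\ref{cor-P1}; no argument for it is given beyond the citation. So there is nothing in the paper against which to compare your sketch.

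That said, as a proposed outline of the actual proof in \cite{SP19}, your plan is broadly on target: Sun--Pan do follow the Green--Tao transference strategy, the crux being the construction of a pseudorandom majorant for $\mathbb{P}(1)$ and the verification of the linear-forms and correlation conditions. A couple of caveats are worth flagging. First, the naive product ``(truncated Selberg weight for primality) $\times$ (divisor sum $\sum_{d\mid n-1}\chi_{-4}(d)$)'' is not quite how one builds the majorant; the representability detector has to be smoothed and truncated as well, and the two sieves interact non-trivially when you expand the correlation sums, so the design of $\nu$ is more delicate than a simple product. Second, the correlation-condition verification is not a routine Goldston--Y{\i}ld{\i}r{\i}m computation: the extra $\chi_{-4}$-twist introduces genuinely new error terms, and controlling them uniformly over the relevant linear patterns is the substantive contribution of \cite{SP19}. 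Your sketch acknowledges this difficulty but stops short of indicating how it is overcome. If you were writing this up as an actual proof rather than citing \cite{SP19}, those are the places where real work would be required.
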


Transferring Theorem~\ref{SP-thm} to the continued fraction via Theorem~\ref{cor-FS-R}  
we obtain the following statement.
\begin{thm}\label{cor-P1}
If $A\subset\mathbb P(1)$ 
 and $\overline{d}(A|\mathbb P(1))>0$, then 
for any integer $\ell\geq3$ there exist  $k\in\mathbb Z$, $m\in\mathbb N$, $n_1,n_2,\ldots,n_\ell\in\mathbb N$ such that \[k+m,k+2m,\ldots,k+\ell m\in A,\] and
for any $x\in E_{\mathbb P(1),A}$, \[a_{n_1}(x)=k+m,a_{n_2}(x)=k+2m,\ldots,a_{n_\ell}(x)=k+\ell m.\] 

\end{thm}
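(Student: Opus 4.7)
The plan is to reduce Theorem~\ref{cor-P1} to a direct application of the relative fractal transference principle (Theorem~\ref{cor-FS-R}) to the pair $(\mathbb P(1),A)$, combined with the Sun--Pan theorem (Theorem~\ref{SP-thm}). The key observation is that once the common-digit set produced by Theorem~\ref{cor-FS-R} inherits the positive relative upper density from $A$, its arithmetic-progression structure is immediate from Szemer\'edi in $\mathbb P(1)$.

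First I would verify that the hypotheses of Theorem~\ref{cor-FS-R} apply: by Motohashi's upper bound and Iwaniec's matching lower bound quoted in the excerpt, $\mathbb P(1)$ has polynomial density with exponent $\alpha=1$ (and $\beta=3/2$), and by assumption $\overline{d}(A\mid\mathbb P(1))>0$. Applying Theorem~\ref{cor-FS-R} to $S=\mathbb P(1)$ and $A$ yields a set $E_{\mathbb P(1),A}\subset E$ with $\{a_n(x)\colon n\in\mathbb N\}\subset\mathbb P(1)$ for every $x\in E_{\mathbb P(1),A}$, such that the ``common-digit'' set
\[
B\;=\;\bigcup_{n\in\mathbb N}\bigcap_{x\in E_{\mathbb P(1),A}}\{a_n(x)\}\cap A
\]
satisfies $B\subset A$ and $\overline{d}(B\mid\mathbb P(1))=\overline{d}(A\mid\mathbb P(1))>0$. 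By construction, for each $q\in B$ there is an index $n(q)\in\mathbb N$ with $a_{n(q)}(x)=q$ for every $x\in E_{\mathbb P(1),A}$.

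Next I would apply Theorem~\ref{SP-thm} to the set $B$: since $B\subset\mathbb P(1)$ and $\overline{d}(B\mid\mathbb P(1))>0$, for every integer $\ell\geq3$ there exist $k\in\mathbb Z$ and $m\in\mathbb N$ with $k+m,k+2m,\ldots,k+\ell m\in B$. Setting $n_i=n(k+im)$ for $i=1,\ldots,\ell$ and using the defining property of $B$, one obtains $a_{n_i}(x)=k+im$ for every $x\in E_{\mathbb P(1),A}$, which is exactly the conclusion of Theorem~\ref{cor-P1}.

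There is no real obstacle beyond setting up the correspondence; the entire work lies in Theorem~\ref{cor-FS-R} (where the simultaneous realization of elements of a positive-density subset of $S$ as common digits across a full-dimensional fractal is the non-trivial content) and in the deep result of Sun--Pan. The only point requiring attention is the verification that $B$ is a subset of $\mathbb P(1)$ to which Theorem~\ref{SP-thm} may be applied: this is immediate because $B\subset A\subset\mathbb P(1)$, and the density statement $\overline{d}(B\mid\mathbb P(1))=\overline{d}(A\mid\mathbb P(1))>0$ is precisely the content of equation~\eqref{equation5} of Theorem~\ref{cor-FS-R}. Thus the transfer is clean and the proof reduces to quoting the two ingredients in sequence.
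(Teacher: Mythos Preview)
Your proposal is correct and matches the paper's approach exactly: the paper derives Theorem~\ref{cor-P1} simply by ``transferring Theorem~\ref{SP-thm} to the continued fraction via Theorem~\ref{cor-FS-R}'', without writing out a separate proof. Your spelled-out argument---checking that $\mathbb P(1)$ has polynomial density, applying Theorem~\ref{cor-FS-R} to obtain the common-digit set $B$ with $\overline{d}(B\mid\mathbb P(1))=\overline{d}(A\mid\mathbb P(1))>0$, and then invoking Sun--Pan on $B$---is precisely the intended deduction.
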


\subsubsection{Arithmetic progressions in sets given by Piatetski-Shapiro sequences }
For $x\geq0$ let $\lfloor x\rfloor$ denote the largest integer not exceeding $x$.
For $\alpha>1$ we consider the set
\[{\rm PS}(\alpha)=\{\lfloor n^\alpha\rfloor\colon n\in\mathbb N\}.\]
For $\alpha>1$ and $\alpha\notin\mathbb N$, the sequence $(\lfloor n^\alpha\rfloor )_{n=1}^\infty$
is called {\it Piatetski-Shapiro's sequence} \cite{PS53},
 after Piatetski-Shapiro \cite{PS53} who proved that for all $1<\alpha<12/11$ the sequences contain infinitely many prime numbers. Rivat and Wu \cite{RW01} improved this range to $1<\alpha<243/205$. 
In spite of a vast amount of literature,
the existence of prime numbers in Piatetski-Shapiro's sequences is not a topic of this paper.
There is no integer $\alpha>1$ such that ${\rm PS}(\alpha)$ contains arithmetic progressions of arbitrary lengths  \cite{DM97,D52,D66}. 
Since ${\rm PS}(\alpha)$ has a convergent reciprocal sum, it is independent of the conjecture of Erd\H{o}s and Tur\'an.

\begin{figure}
\begin{center}
\includegraphics[height=7cm,width=10.5cm]{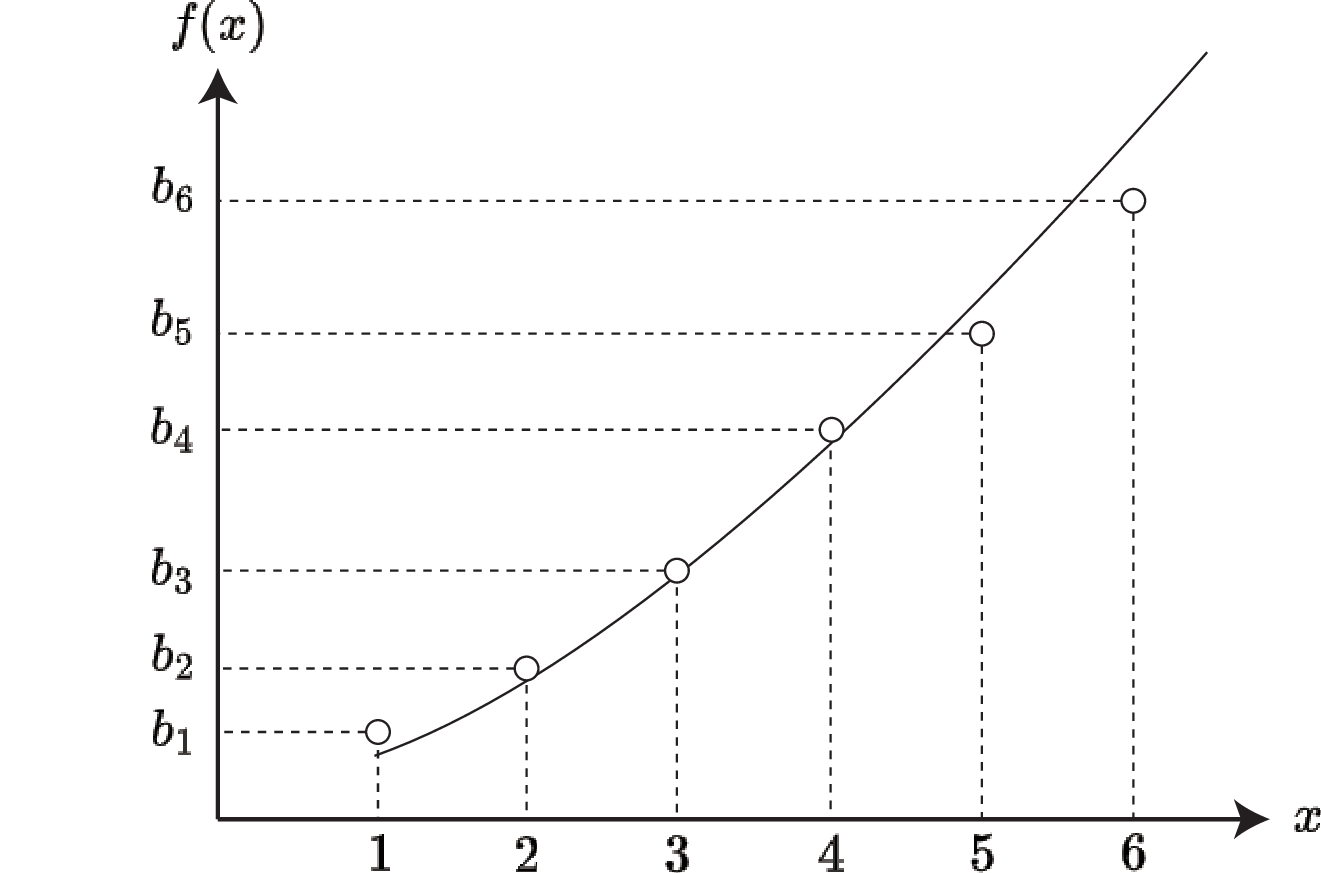}
\caption{The graph of a smooth function $x\in[1,\infty)\mapsto f(x)\in\mathbb R$ and an associated slightly curved sequence $\{b_n\}_{n=1}^\infty$: $|b_n-f(n)|=O(1)$ $(n\to\infty)$.}
\label{fig10}
\end{center}
\end{figure}
 Piatetski-Shapiro's sequence is
obtained by curving with bounded errors 
the sequence $(n^\alpha)_{n=1}^\infty$ whose graph $\{(n,n^\alpha)\in\mathbb R^2\colon n\in\mathbb N\}$ is contained in the graph of the smooth function $x\in[1,\infty)\mapsto x^\alpha$.
Positive integer sequences with this kind of properties are called {\it slightly curved sequences} \cite{SY19},
see \textsc{Figure}~\ref{fig10}.
 Saito and Yoshida \cite[Theorem~4]{SY19} extended Szemer\'edi's theorem to graphs of a class of slightly curved sequences, and as a corollary proved that for every $1<\alpha<2$, the graph $\{(n,\lfloor n^\alpha\rfloor)\in\mathbb N^2\colon n\in\mathbb N\}$ contains an $\ell$-term arithmetic progression for every $\ell\geq3$  (see \cite[Corollary~6]{SY19}). From this it follows that 
 ${\rm PS}(\alpha)$ contains an $\ell$-term arithmetic progression for every $\ell\geq3$.
In fact,
 the next theorem follows from combining results in \cite{SY19}. 
 For completeness we include a proof in Appendix~\ref{pfthmSY}.

\begin{thm}\label{thm-SY}Let $1<\alpha<2$. If $A\subset{\rm PS}(\alpha)$
and $\overline{d}(A|{\rm PS}(\alpha))>0$,
then $A$ contains an $\ell$-term arithmetic progression for every $\ell\geq3$.\end{thm}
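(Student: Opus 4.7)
The plan is to reduce the statement to the Szemer\'edi-type theorem of Saito--Yoshida for subsets of graphs of slightly curved sequences \cite[Theorem~4]{SY19}, which is designed precisely for this kind of situation.

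First, I would lift $A$ to the graph
\[
\Gamma=\{(n,\lfloor n^{\alpha}\rfloor) : n\in\mathbb N\}\subset\mathbb N^{2}
\]
by setting $\widetilde A=\{(n,\lfloor n^{\alpha}\rfloor) : \lfloor n^{\alpha}\rfloor\in A\}$. Since $1<\alpha<2$, the map $n\mapsto\lfloor n^{\alpha}\rfloor$ is strictly increasing for all sufficiently large $n$, so (after discarding at most finitely many elements) the projection onto the second coordinate yields a bijection between $\widetilde A$ and $A$.

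Next, I would translate the hypothesis $\overline d(A\mid\mathrm{PS}(\alpha))>0$ into positive upper relative density of $\widetilde A$ in $\Gamma$. Setting $M(N)=\#\{n\in\mathbb N : \lfloor n^{\alpha}\rfloor\leq N\}$, the box $[1,M(N)]\times[1,N]$ contains exactly $M(N)=\#(\mathrm{PS}(\alpha)\cap[1,N])$ points of $\Gamma$ and exactly $\#(A\cap[1,N])$ points of $\widetilde A$. Hence the ratio of these two quantities is precisely $\#(A\cap[1,N])/\#(\mathrm{PS}(\alpha)\cap[1,N])$, whose $\limsup$ is $\overline d(A\mid\mathrm{PS}(\alpha))>0$.

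Third, since $(n^{\alpha})_{n\in\mathbb N}$ with $1<\alpha<2$ is slightly curved in the sense of \cite{SY19}, I would invoke \cite[Theorem~4]{SY19}: any subset of $\Gamma$ of positive upper relative density contains an $\ell$-term arithmetic progression in the ambient $\mathbb Z^{2}$ for every $\ell\geq 3$. Applying this to $\widetilde A$ produces, for each $\ell\geq 3$, a base point $(n_{0},m_{0})\in\mathbb Z^{2}$ and a nonzero common difference $(d_{1},d_{2})\in\mathbb Z^{2}$ such that $(n_{0}+k d_{1},\,m_{0}+k d_{2})\in\widetilde A$ for $k=0,1,\ldots,\ell-1$. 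Reading off the second coordinate yields $m_{0},\,m_{0}+d_{2},\ldots,m_{0}+(\ell-1)d_{2}\in A$; non-triviality ($d_{2}\neq 0$) is forced by the strict monotonicity of $n\mapsto\lfloor n^{\alpha}\rfloor$ applied to the $\ell$ distinct first coordinates $n_{0}+k d_{1}$ (which in turn requires $d_{1}\neq 0$, and this is guaranteed as soon as the $\ell$ second coordinates are to be distinct points of $\Gamma$).

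The main obstacle is bookkeeping: one has to confirm that the notion of upper relative density used in \cite[Theorem~4]{SY19} matches, or is implied by, the one produced by the lift above. In particular, Saito--Yoshida normalize their boxes by powers of $N$ adapted to the curve, while our natural normalization is $\#(\mathrm{PS}(\alpha)\cap[1,N])\sim N^{1/\alpha}$; verifying that positive upper relative density of $A$ in $\mathrm{PS}(\alpha)$ indeed implies the hypothesis of their theorem is the only non-mechanical step. Once this compatibility is in place, the argument is essentially a change of variables and a projection.
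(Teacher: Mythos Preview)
Your approach is essentially the same as the paper's: both pull $A$ back along $n\mapsto\lfloor n^{\alpha}\rfloor$ to a set $B\subset\mathbb N$ (your $\widetilde A$ is just the graph over $B$), verify that $B$ has positive upper density in $\mathbb N$, invoke Saito--Yoshida, and project to the second coordinate. The paper dispatches your self-identified ``main obstacle'' by citing \cite[Corollary~5]{SY19} rather than Theorem~4---the former is stated with the hypothesis $d_{\rm B}(B)>0$ in $\mathbb N$, so the density compatibility collapses to the one-line identity $\#(B\cap[1,\#({\rm PS}(\alpha)\cap[1,N])])=\#(A\cap[1,N])$, which immediately gives $\overline d(B)\geq\overline d(A|{\rm PS}(\alpha))/2>0$.
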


Clearly, ${\rm PS}(\alpha)$ has polynomial density with exponent $\alpha$ and $\beta=0$.
Transferring Theorem~\ref{thm-SY} to the continued fraction via Theorem~\ref{cor-FS-R} 
we obtain the following statement.
\begin{thm}\label{cor-PS}
Let $1<\alpha<2$. If $A\subset{\rm PS}(\alpha)$
and $\overline{d}(A|{\rm PS}(\alpha))>0$, then for any integer $\ell\geq3$ there exist $k\in\mathbb Z$, $m\in\mathbb N$, $n_1,n_2,\ldots,n_\ell\in\mathbb N$ such that \[k+m,k+2m,\ldots,k+\ell m\in A,\]
and
for any $x\in E_{{\rm PS}(\alpha),A}$, \[a_{n_1}(x)=k+m,a_{n_2}(x)=k+2m\ldots,a_{n_\ell}(x)=k+\ell m.\]  
\end{thm}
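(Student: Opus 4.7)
The plan is to deduce Theorem~\ref{cor-PS} by combining the relative fractal transference principle (Theorem~\ref{cor-FS-R}) with the density combinatorics input already recorded as Theorem~\ref{thm-SY}, exactly in parallel with the way Theorem~\ref{cor-prime} is deduced from Theorems~\ref{cor-FS-R} and~\ref{TZ-thm}, and Theorem~\ref{cor-P1} from Theorems~\ref{cor-FS-R} and~\ref{SP-thm}. Since ${\rm PS}(\alpha)=\{\lfloor n^\alpha\rfloor\colon n\in\mathbb N\}$ has polynomial density with exponent $\alpha$ and $\beta=0$ (as observed in the paragraph preceding the statement), Theorem~\ref{cor-FS-R} applies directly with $S={\rm PS}(\alpha)$ and with $A\subset S$ the given set of positive upper relative density.

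First I would apply Theorem~\ref{cor-FS-R} to obtain a subset $E_{{\rm PS}(\alpha),A}$ of $\{x\in E\colon\{a_n(x)\colon n\in\mathbb N\}\subset {\rm PS}(\alpha)\}$ satisfying the dimension identity \eqref{equation4} together with
\[\overline{d}\!\left(\bigcup_{n\in\mathbb N}\bigcap_{x\in E_{{\rm PS}(\alpha),A}}\{a_n(x)\}\cap A\,\Big|\,{\rm PS}(\alpha)\right)=\overline{d}(A|{\rm PS}(\alpha))>0.\]
Denote the set on the left by $B$. By construction $B\subset A\subset{\rm PS}(\alpha)$, and the displayed equation tells us that $B$ has positive upper density relative to ${\rm PS}(\alpha)$.

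Next I would feed $B$ into the density combinatorics input. Since $1<\alpha<2$ and $B\subset{\rm PS}(\alpha)$ with $\overline{d}(B|{\rm PS}(\alpha))>0$, Theorem~\ref{thm-SY} provides, for any fixed $\ell\geq 3$, integers $k\in\mathbb Z$ and $m\in\mathbb N$ such that the progression $k+m,k+2m,\ldots,k+\ell m$ lies entirely in $B$. Unwinding the definition of $B$, each term $k+im$ (for $i=1,\ldots,\ell$) belongs to some set $\bigcap_{x\in E_{{\rm PS}(\alpha),A}}\{a_{n_i}(x)\}$, i.e.\ there exists an index $n_i\in\mathbb N$ with $a_{n_i}(x)=k+im$ for every $x\in E_{{\rm PS}(\alpha),A}$ simultaneously. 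This yields precisely the conclusion of Theorem~\ref{cor-PS}.

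There is essentially no new obstacle in this final argument: all the hard work is packaged in Theorems~\ref{cor-FS-R} and~\ref{thm-SY}. The only subtlety worth flagging is ensuring that the intersection-union set $B$ inherits enough structure from $A$ to apply Theorem~\ref{thm-SY}, but this is automatic from \eqref{equation5}, which was designed precisely so that the transferred set has the same relative upper density as $A$ and hence satisfies the hypothesis of any density combinatorics theorem in $S={\rm PS}(\alpha)$ that $A$ itself satisfies. The same template would also reprove Theorems~\ref{cor-N}, \ref{cor-prime}, and \ref{cor-P1} by substituting the appropriate input theorem (\ref{BL-thm}, \ref{TZ-thm}, \ref{SP-thm}) in place of Theorem~\ref{thm-SY}.
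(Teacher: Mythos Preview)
Your proposal is correct and follows exactly the same route as the paper, which simply states that Theorem~\ref{cor-PS} is obtained by ``transferring Theorem~\ref{thm-SY} to the continued fraction via Theorem~\ref{cor-FS-R}'' after noting that ${\rm PS}(\alpha)$ has polynomial density with exponent $\alpha$. Your unpacking of this transfer---applying Theorem~\ref{cor-FS-R} to produce the set $B$ with $\overline{d}(B|{\rm PS}(\alpha))=\overline{d}(A|{\rm PS}(\alpha))>0$, then invoking Theorem~\ref{thm-SY} on $B$ and reading off the common indices $n_1,\ldots,n_\ell$ from the intersection in \eqref{equation5}---is precisely what the paper intends.
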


\subsubsection{Polynomial progressions in subsets of $\mathbb N$ with positive lower density}
The lower density of $S\subset\mathbb N$ is given by  \[\underline{d}(S)=\liminf_{N\to \infty}\frac{\#({S\cap[1,N]})}{N}.\]
Any subset of $\mathbb N$ with positive lower density has polynomial density with $\alpha=1$, $\beta=0$. Transferring Theorem~\ref{BL-thm} to the continued fraction via Theorem~\ref{cor-FS-R} we obtain the following statement.
\begin{thm}\label{lower-cor}
Let $S\subset\mathbb N$ satisfy $\underline{d}(S)>0$.
If $A\subset S$ and $\overline{d}(A|S)>0$, then for any finite collection $h_1,h_2\ldots,h_\ell$ of elements of $\mathbb Z_0[X]$ there exist $k\in\mathbb Z$, $m\in\mathbb N$, $n_1,n_2,\ldots,n_\ell\in\mathbb N$ such that \[k+h_1(m),k+h_2(m),\ldots,k+h_\ell(m)\in S,\]
and for any $x\in E_{S,A}$, \[a_{n_1}(x)=k+h_1(m),a_{n_2}(x)=k+h_2(m),\ldots,a_{n_\ell}(x)=k+h_\ell(m).\] \end{thm}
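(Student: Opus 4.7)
The plan is to derive Theorem~\ref{lower-cor} as a direct consequence of the relative fractal transference principle (Theorem~\ref{cor-FS-R}) together with Bergelson--Leibman's theorem in $\mathbb{N}$ (Theorem~\ref{BL-thm}). The reduction follows the exact same pattern by which Theorem~\ref{cor-N} is obtained from Theorem~\ref{BL-thm} and Theorem~\ref{cor-FS}; the only new ingredient is a short density comparison.

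First I would verify that $S$ has polynomial density with exponent $\alpha=1$ and $\beta=0$. Since $\underline{d}(S)>0$, for all sufficiently large $N$ one has $\#(S\cap[1,N])\geq\tfrac{1}{2}\underline{d}(S)\cdot N$, while the upper bound $\#(S\cap[1,N])\leq N$ is trivial. This makes Theorem~\ref{cor-FS-R} applicable and produces a set $E_{S,A}\subset\{x\in E:\{a_n(x):n\in\mathbb{N}\}\subset S\}$ of Hausdorff dimension $1/2$ for which $B:=\bigcup_{n\in\mathbb{N}}\bigcap_{x\in E_{S,A}}\{a_n(x)\}\cap A$ satisfies $\overline{d}(B|S)=\overline{d}(A|S)>0$.

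Next I would upgrade relative density to absolute upper density. From the identity $\#(B\cap[1,N])/N=\bigl(\#(B\cap[1,N])/\#(S\cap[1,N])\bigr)\cdot\bigl(\#(S\cap[1,N])/N\bigr)$ and the fact that the second factor is eventually bounded below by $\tfrac{1}{2}\underline{d}(S)$, taking $\limsup$ yields $\overline{d}(B)\geq\tfrac{1}{2}\underline{d}(S)\cdot\overline{d}(B|S)>0$, and therefore $d_{\rm B}(B)\geq\overline{d}(B)>0$. Here the \emph{lower}-density hypothesis is used essentially: it guarantees that $\#(S\cap[1,N])/N$ stays bounded away from zero along every subsequence realizing the $\limsup$ of $\#(B\cap[1,N])/\#(S\cap[1,N])$, which a merely positive upper density of $S$ would not provide.

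Finally, apply Theorem~\ref{BL-thm} to $B$: for any $h_1,\dots,h_\ell\in\mathbb{Z}_0[X]$ there exist $k\in\mathbb{N}$ and $m\in\mathbb{N}$ with $k+h_i(m)\in B$ for each $i$. Since $B\subset A\subset S$, the integers $k+h_i(m)$ all lie in $S$ (and $k\in\mathbb{N}\subset\mathbb{Z}$ matches the statement). By the definition of $B$, for each $i$ there is $n_i\in\mathbb{N}$ such that $k+h_i(m)\in\bigcap_{x\in E_{S,A}}\{a_{n_i}(x)\}\cap A$, which is precisely the required identity $a_{n_i}(x)=k+h_i(m)$ for every $x\in E_{S,A}$. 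There is no serious obstacle: once Theorems~\ref{cor-FS-R} and~\ref{BL-thm} are in hand, the only step of substance is the two-line density comparison that converts $\overline{d}(B|S)>0$ into $d_{\rm B}(B)>0$.
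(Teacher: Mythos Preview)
Your proposal is correct and follows exactly the route the paper intends: observe that $\underline{d}(S)>0$ gives polynomial density with $\alpha=1$, $\beta=0$, apply Theorem~\ref{cor-FS-R} to obtain $E_{S,A}$ and the common-digit set $B$, and then feed $B$ into Theorem~\ref{BL-thm}. The paper merely says ``transferring Theorem~\ref{BL-thm} to the continued fraction via Theorem~\ref{cor-FS-R}'' without spelling out the density comparison $\overline{d}(B|S)>0\Rightarrow d_{\rm B}(B)>0$; your two-line argument using $\underline{d}(S)>0$ is precisely the missing link, and your remark that the \emph{lower}-density hypothesis is what makes this step go through is on point.
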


\subsection{Essential role of non-monotonicity of digits}\label{non}
Arithmetic progressions in the continued fraction was first studied by Tong and Wang \cite{TW}, who considered the set 
\[J=\{x\in (0,1)\setminus\mathbb Q\colon a_n(x)<a_{n+1}(x)\ \text{ for every } n\geq1\}\subsetneq E,\] 
and proved that the set
of $x\in J$ whose digit set $\{a_n(x)\colon n\in\mathbb N\}$ contains an $\ell$-term arithmetic progression with a common difference $m$
for every $\ell\geq3$ and every $m\geq1$
is of Hausdorff dimension $1/2$. 
The argument in \cite{TW} is constructive and does not rely on Szemer\'edi's theorem.
Zhang and Cao \cite{ZC2} proved that 
the set of points in $J$ whose digit sets have upper Banach density $1$ is of Hausdorff dimension $1/2$, a statement comparable to \eqref{equation-1} in the special case $S=\mathbb N$.

 Since the map $x\in J\mapsto \{a_n(x)\colon n\in\mathbb N\}\in 2^{\mathbb N}$ induces a bijection between $J$ and the collection of infinite subsets of $\mathbb N$, 
 $J$ is indeed a natural set to investigate. However, the next dimension drop theorem of independent interest 
 suggests that $E$ rather than $J$ is a right choice for our purpose of building a bridge from density combinatorics to fractal dimension theory of continued fractions.
\begin{thm}\label{J-thm}
Let $S\subset\mathbb N$. 
\begin{itemize}
\item[(a)] 
We have \[\dim_{\rm H}\left\{x\in J\colon \overline{d}(\{a_n(x)\colon n\in\mathbb N\}\cap S)>0\right\}=0<\dim_{\rm H}J.\]
\item[(b)]
If $S$ has polynomial density with exponent $\alpha<2$, then for any $A\subset S$ we have
\[\dim_{\rm H}\left\{
\begin{tabular}{l}
\!\!\!$x\in J\colon$\\
\!\!\!$\{a_n(x)\colon n\in\mathbb N\}\subset S$,\!\!\!\\
\!\!\!$\overline{d}(\{a_n(x)\colon n\in\mathbb N\}\cap A |S)>0$\!\!\!
\end{tabular}
\right\}\leq\frac{\alpha-1}{2\alpha}<\dim_{\rm H}
\left\{\begin{tabular}{l}
\!\!\!$x\in J\colon$\\
\!\!\!$\{a_n(x)\colon n\in\mathbb N\}\subset S$\!\!\!\end{tabular}
\right\}.\]
\end{itemize}\end{thm}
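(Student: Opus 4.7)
The plan is to extract from the density hypothesis a sub-sequential growth bound on the digits of $x\in J$, and then to bound the Hausdorff dimension of the resulting ``slow-growth'' set inside $J$ by a direct covering argument based on Stirling's formula; the strict inequalities then follow from known or easily constructed Cantor subsets saturating $\dim_H J$ and $\dim_H J_S$, where I abbreviate $J_S:=\{x\in J:\{a_n(x):n\in\mathbb N\}\subset S\}$.

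For part (a), from $\overline d(\{a_n(x):n\in\mathbb N\}\cap S)=\overline d(S)=:\delta>0$ I immediately get $\overline d(\{a_n(x):n\in\mathbb N\})\ge\delta$, so there exists $N_k\to\infty$ with $\#\{n\in\mathbb N:a_n(x)\le N_k\}\ge(\delta/2)N_k$. Since the digits on $J$ are strictly increasing, setting $m_k:=\#\{n:a_n(x)\le N_k\}$ gives $a_{m_k}(x)\le N_k\le(2/\delta)m_k$, so the target set lies in $J_C:=\{x\in J:a_n(x)\le Cn\text{ for infinitely many }n\}$ with $C=2/\delta$. For each $N$, $J_C\subset\bigcup_{n\ge N}\{x\in J:a_n(x)\le Cn\}$, and each of the latter sets is a union of at most $\binom{\lfloor Cn\rfloor}{n}$ level-$n$ fundamental cylinders of diameter at most $q_n^{-2}\le(n!)^{-2}$, using $a_i\ge i$ on $J$. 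Stirling gives $\binom{Cn}{n}(n!)^{-2s}\to 0$ super-exponentially for every $s>0$, so $H^s(J_C)=0$ and $\dim_H J_C=0$; combined with $\dim_H J=1/2$ (Tong--Wang) this yields part (a).

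For the upper bound in part (b), the same extraction, now using $\#(S\cap[1,N])\asymp N^{1/\alpha}/(\log N)^\beta$, gives $a_{m_k}(x)\le C'\,m_k^\alpha(\log m_k)^{\alpha\beta}$ along $m_k\to\infty$ for some $C'=C'(\delta)$. The target set is thus contained in the subset $T\subset J_S$ on which $a_n(x)\le C'n^\alpha(\log n)^{\alpha\beta}$ holds infinitely often. Counting the first $n$ digits loosely as distinct positive integers in $[1,C'n^\alpha(\log n)^{\alpha\beta}]$ bounds the number of level-$n$ cylinders in this slice by $\binom{\lfloor C'n^\alpha(\log n)^{\alpha\beta}\rfloor}{n}\le(eC'n^{\alpha-1}(\log n)^{\alpha\beta})^n$. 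On the other hand, $\{a_n(x)\}\subset S$ together with the polynomial-density lower bound $s_i\ge c_1 i^\alpha$ (with $s_i$ the $i$-th element of $S$) yields $q_n\ge\prod_{i=1}^n a_i\ge c_1^{\,n}(n!)^\alpha$, so each such cylinder has diameter at most $c_1^{-2n}(n!)^{-2\alpha}$. The level-$n$ contribution to $H^s(T)$ is therefore $\lesssim(\mathrm{const})^n(\log n)^{\alpha\beta n}\,n^{n(\alpha-1-2\alpha s)}$; for any $s>(\alpha-1)/(2\alpha)$ the factor $n^{n(\alpha-1-2\alpha s)}=e^{-cn\log n}$ with $c>0$ dominates the $(\log n)^{\alpha\beta n}=e^{\alpha\beta n\log\log n}$ term, so $H^s(T)=0$ and $\dim_H T\le(\alpha-1)/(2\alpha)$.

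For the strict inequality $(\alpha-1)/(2\alpha)<\dim_H J_S$, I would exhibit a Cantor subset of $J_S$ of dimension $1/(2\alpha)$ by adapting the Ramharter--Tong--Wang scheme to the constraint $\{a_n\}\subset S$: at stage $n$, pick $a_n$ from a window $S\cap[L_n,2L_n]$ with $L_n\uparrow\infty$ chosen so that both strict monotonicity and the polynomial density of $S$ yield sufficiently many admissible digits, and then apply the standard pressure/mass-distribution argument (with $q_n\asymp(n!)^\alpha$ on this Cantor set) to reach the dimension $1/(2\alpha)$; under $\alpha<2$, this strictly exceeds $(\alpha-1)/(2\alpha)$. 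The main obstacle is this lower bound on $\dim_H J_S$: the covering estimates above are routine Stirling computations, but producing a rich Cantor set inside $J_S$ demands careful bookkeeping of the windows $L_n$ so that at each stage enough $S$-valued digits remain while $a_n>a_{n-1}$ is preserved.
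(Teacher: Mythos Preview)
Your upper-bound arguments are correct and take a genuinely different, more elementary route than the paper. The paper does not cover directly; instead it passes through the convergence exponent $\tau(B)=\inf\{t:\sum_{a\in B}a^{-t}<\infty\}$. It shows that for $x$ in the target set the density hypothesis forces $\tau(\{a_n(x):n\in\mathbb N\})=1/\alpha$ (via Lemma~\ref{disc2}), and then invokes a multifractal level-set formula of Fang--Ma--Song--Wu (Lemma~\ref{fang-eq}): $\dim_{\rm H}\{x\in J:\tau(\{a_n(x):n\})=c\}=(1-c)/2$. Taking $c=1$ and $c=1/\alpha$ gives the bounds in (a) and (b). Your Stirling covering essentially re-derives the upper half of this level-set formula for the specific $c$ you need, trading a citation for a self-contained computation; the paper's route is shorter to write but imports a nontrivial external result. (Incidentally, had you also used the constraint $\{a_n(x)\}\subset S$ in your cylinder \emph{count} in (b), the number of level-$n$ cylinders would drop to roughly $\binom{Cn}{n}$ and your argument would give the sharper bound $0$; the paper's $(\alpha-1)/(2\alpha)$ is only stated as an upper bound.)

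For the strict inequality in (b), the paper obtains $\dim_{\rm H} J_S=1/(2\alpha)$ from Corollary~\ref{E-cor}, which rests on the seed-set construction of Proposition~\ref{seed-Prop}: one picks $S_*\subset S$ and builds a Moran fractal $R_{t,L}(S_*)\subset J_S$ with digit windows $S_*\cap[t^n,Lt^n]$, then applies Lemma~\ref{mass}. Your sketch (windows $S\cap[L_n,2L_n]$, mass-distribution estimate) is heading in exactly this direction; the paper's detailed bookkeeping in \S\ref{C-const} confirms your assessment that this lower bound is where the real work lies, and supplies the construction you would need to complete the argument.
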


In a sharp contrast to \eqref{equation-0},
Theorem~\ref{J-thm}(a) asserts that 
 for any $S\subset\mathbb N$ with positive upper density, the condition $\overline{d}(\{a_n(x)\colon n\in\mathbb N\}\cap S)=\overline{d}(S)$ forces a significant dimension drop in $J$. In contrast to 
\eqref{equation6},
 Theorem~\ref{J-thm}(b) asserts that
the conditions
$\{a_n(x)\colon n\in\mathbb N\}\subset S$ and 
$\overline{d}(\{a_n(x)\colon n\in\mathbb N\}\cap A |S)=\overline{d}(A|S)$ together force a dimension drop in $J$. The Hausdorff dimension drops to zero in the case $\alpha=1$ such as $S=\mathbb P$ and $S=\mathbb P(1)$.
 These comparisons show that
the non-monotonicity of continued fraction digits for points in $E\setminus J$ is crucial
in both Theorems~\ref{cor-FS} and \ref{cor-FS-R}.
For more technical details on the role of non-monotonicity of digits, see the comments in (III) in Section~\ref{outlinepf}.

\subsection{Outline of proofs of the main results}\label{outlinepf}
Since $\overline{d}(S|\mathbb N)=\overline{d}(S)$ holds for any $S\subset \mathbb N$,
Theorem~\ref{cor-FS}(a) follows from Theorem~\ref{cor-FS-R}.
 Proofs of Theorems~\ref{cor-FS}(b) and \ref{cor-FS-R} rely on essentially the same set of ideas,
 but there are some differences. Below we outline proofs of these two.

In principle, constructions of all our target sets in $E$ break into three steps.
\begin{itemize}
\item {\bf Step~1:} (Section~\ref{pre-lem-sec}) construction of an infinite subset of $\mathbb N$.

\item {\bf Step~2:} (Section~\ref{PF-sec}) construction of a {\it seed set}: a certain Moran fractal in $E$ consisting of points whose digit sets are contained in the set constructed in Step~1.

\item {\bf Step~3:} (Sections~\ref{insert-sec}, \ref{prevent-sec}) modification of
the seed set constructed in Step~2 without changing the Hausdorff dimension. 
\end{itemize}
Step~3 is most technical.
A Moran fractal is a Cantor-like set 
that is constructed by iterative procedure 
based on countably many parameters, and
has been used extensively for estimating Hausdorff dimension of interesting sets, see  \cite{MW88,Mo46,P97,PW97} for example.

 Let $S\subset\mathbb N$
have polynomial density with exponent $\alpha\geq1$, and let $A\subset S$ have positive upper density relative to $S$.
The three steps in
the construction of the target set $E_{S,A}$ in Theorem~\ref{cor-FS-R} are described as follows.

\begin{itemize}
\item {\bf Step~1:} construction of 
an infinite subset $S_*$ of $S$ with zero upper density relative to $S$.

\item {\bf Step~2:} construction of a Moran fractal $R_{t,L}(S_*)$ (see \eqref{seed-def}) of Hausdorff dimension $1/(2\alpha)$  consisting of points whose digit sets are contained in $S_*$.
This set is our seed set.

\item {\bf Step~3:} 
modification of the seed set $R_{t,L}(S_*)$ into a new 
Moran fractal of the same Hausdorff dimension, 
by
adding elements of 
 $A$ into the digit sequences of points in the seed set. This new Moran fractal is our target set $E_{S,A}$ (see \eqref{target-def}).
\end{itemize}

\noindent  

This `seed set $\rightsquigarrow$ target set construction' has been 
inspired by the afore-mentioned papers 
 \cite{TW,ZC2} in which the monotonicity of digits is crucial.  
However, as pointed out in Section~\ref{non}, for proofs of our main results
 it is essential to treat points in $E$ for which the digit sequences are not monotone.
 To this end,
 we have developed
a collection of new ideas summarized as follows.

\begin{itemize}
\item[(I)] Since the goal is to construct a subset of $E$, overlapping digits must be avoided. This means that, in the construction of $E_{S,A}$, 
elements of $A$ 
already appearing in the digit sequences of points in the seed set cannot be added in Step~3. In other words, 
what we actually do in Step~3 
is to add elements of $A\setminus S_*$ rather than $A$.
According to
\eqref{equation6}, the elements of $A$ must appear in the digit set of any point in $E_{S,A}$ with relative upper density $\overline{d}(A|S)$.
Hence, the upper density of $S_*$ relative to $S$ must be zero.
Due to this restriction, the construction of $S_*$ in Step~1 is elaborated.

\item[(II)] 

By the well-known formula in \cite[pp.71--72]{Fal14}, the Hausdorff dimension of a Moran fractal in $[0,1]$ is estimated from below
in terms of their parameters. 
The seed set we construct in Step~2 and modify in Step~3
consists of points whose digit sets are contained in the set $S_*$ constructed in Step~1. Due to this restriction of digits, 
detecting parameters of our seed set becomes delicate. We do this parameter detection in a unified way, incorporating the case where $S$ has polynomial density with exponent $\alpha\geq1$. 
As a by-product, our construction yields a refinement of  Ramharter’s theorem \cite{R85} of independent interest (see Proposition~\ref{E-cor}).

\item[(III)]  Intuitively, adding more digits in Step~3 may result in a more loss of Hausdorff dimension. The task in Step~3 of the construction of $E_{S,A}$ is to add sufficiently many elements of $A\setminus S_*$ in appropriate positions, to achieve the positive density 
$\overline{d}(A|S)$ without losing Hausdorff dimension at all.
We accomplish 
this  
counter-intuitive task 
by providing a sharp estimate of H\"older exponent of the associated {\it elimination map}. This estimate follows from our careful choices of positions of digits to be added, which  
relies on the non-monotonicity of continued fraction digits for points in $E\setminus J$. 
 See Section~\ref{insert-sec} for a general setup of Step~3 and relevant definitions, and 
 Section~\ref{prevent-sec} for the key estimate on the elimination map.

\end{itemize}

By virtue of Theorem~\ref{cor-FS}(a), in order to prove Theorem~\ref{cor-FS}(b) 
we may start with a set $S\subset\mathbb N$ with $\overline{d}(S)=0$ and $d_{\rm B}(S)>0$ (see Section~\ref{pf-sec}).
The three steps in the construction of the target set $F_S$
in Theorem~\ref{cor-FS}(b)
 are described as follows.

\begin{itemize}
\item {\bf Step~1:} construction of 
an infinite subset $(\mathbb N\setminus S)_*$ of $\mathbb N\setminus S$ with zero upper density relative to $\mathbb N\setminus S$.

\item {\bf Step~2:} construction of a Moran fractal $R_{t,L}((\mathbb N\setminus S)_*)$ of Hausdorff dimension $1/2$  consisting of points whose digit sets are contained in $(\mathbb N\setminus S)_*$.
This set is our seed set.

\item {\bf Step~3:} 
modification of the seed set $R_{t,L}((\mathbb N\setminus S)_*)$ into a new 
Moran fractal of the same Hausdorff dimension, 
by
adding elements of 
 $S$ into the digit sequences of points in the seed set. This new Moran fractal is our target set $F_{S}$ (see \eqref{target-def-F}).
\end{itemize}
\noindent Due to the difference between the upper density and the upper Banach density, 
 Step~3 of the construction of $F_S$ is considerably different from Step~3 of the construction of $E_{S,A}$.
  We have designed 
 Step~3 in Sections~\ref{insert-sec} and  \ref{prevent-sec} with a sufficient generality, 
 to accommodate both constructions in a unified manner.

 \subsection{Structure of the paper}
The rest of this paper consists of four sections and two appendices.
 In Section~2 we summarize preliminary results.  
 In Section~3 we carry out the construction of a Moran fractal in the three steps 
 outlined in Section~\ref{outlinepf}.
 In Section~4 we apply the general construction in Section~3, and complete the proofs of Theorems~\ref{cor-FS} and \ref{cor-FS-R}. We also complete a proof of Theorem~\ref{J-thm}.
In Section~5, we generalize our main results to a wide class of Iterated Function Systems and semi-regular continued fractions.
In  Appendices~A and B we prove Theorems~\ref{BL-thm} and \ref{thm-SY} respectively.

\section{Preliminaries}
This section summarizes preliminary results needed for the proofs of Theorems~\ref{cor-FS}, \ref{cor-FS-R} and \ref{J-thm}. In Section~\ref{CF-sec} we
recall basic properties of continued fraction digits in the expansion \eqref{RCF}. 
In Section~\ref{dimension-sec} 
we introduce a Moran fractal, 
and provide two lower bounds of Hausdorff dimension of sets in $[0,1]$.
 In Section~\ref{conv-sec} we 
introduce a convergence exponent for infinite subsets of $\mathbb N$, and investigate its properties.

\subsection{Fundamental intervals for continued fractions}\label{CF-sec}
For each $n\in \mathbb N$
and $(a_1,\ldots, a_n)\in \mathbb N^n$,
define non-negative integers
 $p_n$ and  $q_n$ by the recursion formulas
\begin{equation}\label{pq}\begin{split}p_{-1}=1,\ p_0=0,\ p_i&=a_ip_{i-1}+p_{i-2} \ \text{ for  } i=1,\ldots,n,\\
q_{-1}=0,\ q_0=1,\ q_i&=a_iq_{i-1}+q_{i-2} \ \text{ for  } i=1,\ldots,n.\end{split}\end{equation}
For $n\in\mathbb N$ and
$(a_1,\ldots, a_n)\in \mathbb N^n$, we define an {\it $n$-th fundamental interval} by
\[ I(a_1,\ldots, a_n)=
\begin{cases}\vspace{1mm}
\left[\displaystyle{\frac{p_n}{q_n}}, \frac{p_n+p_{n-1}}{q_n+q_{n-1}}\right)&\text{if}\ n\ \text{is even,}\\
\displaystyle{\left(\frac{p_n+p_{n-1}}{q_n+q_{n-1}}, \frac{p_n}{q_n}\right]}&\text{if}\ n\ \text{is odd.}
 \end{cases}
 \]
This interval represents the set of elements of $(0, 1]$ which have the finite or infinite regular continued fraction expansion beginning by $a_1,\ldots,a_n$, i.e.,
\[I(a_1,\ldots, a_n)=\{x\in (0, 1]\colon a_i(x)=a_i\ \text{ for }i=1,\ldots,n\}.\]
Let $|\cdot|$ denote the Euclidean diameter of sets in $[0,1]$. 
 The next lemma estimates the diameter of each fundamental interval in terms of its string of digits.
\begin{lem}
\label{proper}
For every $n\in \mathbb N$ and every $(a_1,\ldots, a_n)\in \mathbb N^n$ we have
\[\frac{1}{2}\prod_{i=1}^{n}\frac{1}{(a_{i}+1)^{2}}\le |I(a_1,\ldots, a_n)|\le\prod_{i=1}^{n}\frac{1}{a_{i}^{2}}.\]
\end{lem}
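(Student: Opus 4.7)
The plan is to reduce the length estimate to a two-sided bound on the denominator $q_n$ of the $n$-th convergent. First I would establish the closed-form length formula
\[|I(a_1,\ldots,a_n)| = \frac{1}{q_n(q_n+q_{n-1})}.\]
This follows directly from the definition of $I(a_1,\ldots,a_n)$, since the two endpoints are $p_n/q_n$ and $(p_n+p_{n-1})/(q_n+q_{n-1})$ in either the even-$n$ or odd-$n$ case; clearing denominators and using the classical determinant identity $p_nq_{n-1} - p_{n-1}q_n = (-1)^n$ (itself an easy induction from the recurrences \eqref{pq}) collapses the numerator to $\pm 1$.

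Next I would prove the inequalities
\[\prod_{i=1}^n a_i \;\le\; q_n \;\le\; \prod_{i=1}^n (a_i+1)\]
by induction on $n$. The lower bound is immediate from $q_n = a_nq_{n-1}+q_{n-2}\ge a_nq_{n-1}$ with base case $q_1=a_1$. For the upper bound one first notes $q_{n-2}\le q_{n-1}$ (the denominators are non-decreasing because $a_i\ge 1$ and $q_0=1$), so $q_n\le (a_n+1)q_{n-1}$, and the claim iterates from $q_1\le a_1+1$.

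Combining these with $q_{n-1}\le q_n$, the two desired bounds drop out at once. For the upper bound on $|I|$,
\[q_n(q_n+q_{n-1}) \;\ge\; q_n^2 \;\ge\; \prod_{i=1}^n a_i^2,\]
which gives $|I(a_1,\ldots,a_n)|\le \prod 1/a_i^2$. For the lower bound,
\[q_n(q_n+q_{n-1}) \;\le\; 2q_n^2 \;\le\; 2\prod_{i=1}^n(a_i+1)^2,\]
which gives $|I(a_1,\ldots,a_n)|\ge \tfrac{1}{2}\prod 1/(a_i+1)^2$.

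There is no real obstacle in this proof; the only mild care needed is to verify that the length formula holds uniformly across the even/odd cases in the definition of $I(a_1,\ldots,a_n)$, which is automatic once one observes that both endpoint descriptions involve the same pair of fractions and the length is merely their absolute difference. Everything else is textbook induction on $n$.
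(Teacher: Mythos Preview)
Your proposal is correct and follows essentially the same route as the paper's proof: derive the length formula $|I(a_1,\ldots,a_n)|=1/(q_n(q_n+q_{n-1}))$, sandwich $q_n$ between $\prod a_i$ and $\prod(a_i+1)$, and use $q_{n-1}\le q_n$ to pass to $1/(2q_n^2)\le |I|\le 1/q_n^2$. The only difference is that the paper cites references for the length formula and the product bounds on $q_n$, whereas you supply the short inductive arguments directly.
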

\begin{proof}
By \cite{IK, Khi64}, for every $n\in \mathbb N$ and every $(a_1,\ldots, a_n)\in \mathbb N^n$ we have
\begin{equation}\label{pq-lem}|I(a_1,\ldots, a_n)|=\frac{1}{q_n(q_n+q_{n-1})}\ \text{ and }\ \prod_{i=1}^n a_i\le q_n\le \prod_{i=1}^n(a_i+1),\end{equation}
where $q_{n-1}$, $q_n$ are given by \eqref{pq}. 
Since $q_{n-1}<q_n$ we have 
\[\frac{1}{2q_n^2}\leq|I(a_1,\ldots, a_n)|\leq  \frac{1}{q_n^2}.\]
Combining these with the double inequalities in \eqref{pq-lem} yields the desired ones.
\end{proof}

 \subsection{Lower bounds of Hausdorff dimension}\label{dimension-sec}
Let $F=\bigcap_{n=0}^\infty E_n$ be a set in $[0,1]$ such that the following holds for every $n\geq1$: 
\begin{itemize}
\item $E_{n-1}$ is the union of finitely many closed intervals in $[0,1]$ of positive diameter, called {\it $(n-1)$-th level intervals};
\item for each $(n-1)$-th level interval $I$, 
the number of $n$-th intervals contained in $E_n\cap I$ is at least $r_n$, and all of these $n$-th intervals are separated by gaps of diameter at least $\delta_n$, where $r_n\geq2$ and $0<\delta_{n+1}<\delta_n$.
\end{itemize}
 We call $F$ {\it a Moran fractal with parameters} $(r_n)_{n=1}^\infty$, $(\delta_n)_{n=1}^\infty$. The Hausdorff dimension of a Moral fractal is estimated from below with its parameters by the next lemma.
\begin{lem}[{\cite[pp.71--72]{Fal14}}]
\label{mass}
Let $F\subset [0,1]$ be a Moran fractal with parameters
 $(r_n)_{n=1}^\infty$, $(\delta_n)_{n=1}^\infty$.
Then 
\[\dim_{\rm H}F\ge \liminf_{n\to \infty}\frac{\log(r_1r_2\cdots r_{n-1})}{-\log (r_n\delta_n)}.\]
\end{lem}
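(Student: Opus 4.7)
The approach is to invoke the mass distribution principle \cite[Proposition~4.2]{Fal14}: if $\mu$ is a finite Borel measure on $F$ satisfying $\mu(U)\le c|U|^s$ for every Borel set $U$ of sufficiently small diameter, then $\dim_{\rm H}F\ge s$. The plan is to fix any $s$ strictly smaller than $s_\infty:=\liminf_{n\to\infty}\log(r_1\cdots r_{n-1})/(-\log(r_n\delta_n))$, construct an explicit probability measure $\mu$ supported on $F$, and verify such a bound.

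We build $\mu$ as a uniform mass distribution on the tree defining $F$. Inside every $(n{-}1)$-th level interval we select exactly $r_n$ of its child $n$-th level intervals (possible by hypothesis) and assign each selected $n$-th level interval mass $1/(r_1r_2\cdots r_n)$; the Kolmogorov extension of this nested construction yields a Borel probability measure supported on a subset of $F$. To bound $\mu(U)$ for $U$ of small diameter, we choose $n$ with $\delta_n\le|U|<\delta_{n-1}$. Since all $(n{-}1)$-th level intervals are pairwise separated by gaps of length at least $\delta_{n-1}>|U|$, $U$ meets at most one $(n{-}1)$-th level interval, and within that parent the selected $n$-th level intervals are separated by gaps at least $\delta_n$, so $U$ intersects at most $\min\bigl(r_n,\,|U|/\delta_n+2\bigr)$ of them. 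This gives
$$\mu(U)\le\frac{\min\bigl(r_n,\;|U|/\delta_n+2\bigr)}{r_1r_2\cdots r_n}.$$

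The final step is a case split according to whether $|U|\ge r_n\delta_n$. In the first regime the numerator equals $r_n$, so $\mu(U)\le 1/(r_1\cdots r_{n-1})$; the choice $s<s_\infty$ implies $(r_n\delta_n)^{s}\cdot r_1\cdots r_{n-1}\ge 1$ for all sufficiently large $n$, whence $\mu(U)\le(r_n\delta_n)^{s}\le|U|^{s}$. In the second regime, using $|U|\ge\delta_n$ we get $\mu(U)\le 3|U|/(\delta_n r_1\cdots r_n)$; combining $|U|<r_n\delta_n$ with the same inequality $(r_n\delta_n)^{-s}\le r_1\cdots r_{n-1}$ yields $\mu(U)\le 3|U|^{s}$. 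Letting $s\nearrow s_\infty$ then finishes the proof. The main obstacle I anticipate is precisely this calibration: making a single exponent $s$ and a constant $C$ independent of $n$ simultaneously control $\mu(U)$ in both regimes $|U|\gtrless r_n\delta_n$. The defining inequality $(r_n\delta_n)^{s}(r_1\cdots r_{n-1})\ge 1$, valid for all large $n$ once $s<s_\infty$, does the heavy lifting, and the remaining manipulations are routine bookkeeping.
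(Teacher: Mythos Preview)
Your proof is correct and is precisely the argument given in Falconer \cite[Example~4.6, pp.~71--72]{Fal14}, which the paper cites without reproducing. The paper gives no independent proof of this lemma, so there is nothing further to compare.
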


Let $F\subset [0,1]$ be a set. We say $f\colon F\to [0,1]$ is {\it H\"older continuous with exponent} $\gamma\in(0,1]$ if there exists $C>0$ such that
\[|f(x)-f(y)|\le C|x-y|^{\gamma}\ \text{ for all } x, y\in F.\]
\begin{lem}[{\cite[Proposition~3.3]{Fal14}}]
\label{Holder-F}
Let $F\subset [0,1]$ and let $f\colon F\to [0,1]$ be H\"older continuous with exponent $\gamma\in(0,1]$.
Then \[\dim_{\rm H}F\geq\gamma\cdot\dim_{\rm H}f(F).\]\end{lem}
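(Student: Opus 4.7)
The plan is to deduce the inequality directly from the definition of Hausdorff measure. Let $C>0$ and $\gamma\in(0,1]$ be such that $|f(x)-f(y)|\le C|x-y|^{\gamma}$ for all $x,y\in F$, so that for every subset $U\subset F$ one has $\mathrm{diam}\,f(U)\le C(\mathrm{diam}\,U)^{\gamma}$. First I would reduce to covers of $F$ by subsets of $F$: replacing any $\delta$-cover $\{V_i\}$ of $F$ by $\{V_i\cap F\}$ does not increase diameters, so $\mathcal{H}^{s}(F)$ is unchanged if one restricts to $\delta$-covers whose members are contained in $F$. This reduction is what makes the H\"older estimate applicable to every cover that one considers.

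Next, given any such $\delta$-cover $\{U_i\}_{i\in\mathbb N}$ of $F$, the family $\{f(U_i)\}_{i\in\mathbb N}$ is a $(C\delta^{\gamma})$-cover of $f(F)$ that satisfies
\[\sum_{i\in\mathbb N}(\mathrm{diam}\,f(U_i))^{t}\le C^{t}\sum_{i\in\mathbb N}(\mathrm{diam}\,U_i)^{\gamma t}\]
for every $t\ge 0$. Taking the infimum over all admissible $\delta$-covers of $F$ yields $\mathcal{H}^{t}_{C\delta^{\gamma}}(f(F))\le C^{t}\mathcal{H}^{\gamma t}_{\delta}(F)$, and letting $\delta\to 0$ (so that $C\delta^{\gamma}\to 0$ as well) gives $\mathcal{H}^{t}(f(F))\le C^{t}\mathcal{H}^{\gamma t}(F)$.

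Finally I would run the standard dimension comparison: suppose $\gamma t>\dim_{\rm H}F$. Then $\mathcal{H}^{\gamma t}(F)=0$, whence $\mathcal{H}^{t}(f(F))=0$ and therefore $t\ge\dim_{\rm H}f(F)$. Taking the infimum over such $t$ gives $\dim_{\rm H}F\ge\gamma\dim_{\rm H}f(F)$, as claimed. This is exactly the content of \cite[Proposition~3.3]{Fal14}, and there is no substantive obstacle; the only minor point requiring care is the reduction to covers by subsets of $F$, without which H\"older continuity could not be applied to the cover elements directly.
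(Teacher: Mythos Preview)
Your argument is correct and is precisely the standard proof of \cite[Proposition~3.3]{Fal14}; the paper does not supply its own proof of this lemma but merely cites Falconer, so there is nothing further to compare.
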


We say $f\colon F\to [0,1]$ is {\it almost Lipschitz} if
for any $\gamma\in(0,1)$,
$f$ is H\"older continuous with exponent $\gamma$.
If $f$ is not Lipschitz continuous and almost Lipschitz, then the multiplicative constant $C$ grows to infinity as $\gamma$ increases to $1$, but this growth is
 inconsequential in the 
 lower bound of Hausdorff dimension.
Increasing $\gamma$ to $1$ in Lemma~\ref{Holder-F} we obtain the next lemma.
\begin{lem}\label{Holder}Let $F\subset [0,1]$ and let $f\colon F\to [0,1]$ be almost Lipschitz. Then \[\dim_{\rm H} F\geq\dim_{\rm H}f(F).\]
\end{lem}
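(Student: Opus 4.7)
The plan is to deduce Lemma~\ref{Holder} directly from Lemma~\ref{Holder-F} by letting the H\"older exponent tend to~$1$. First I would fix the almost Lipschitz map $f\colon F\to[0,1]$ and an arbitrary $\gamma\in(0,1)$. By definition of almost Lipschitz, $f$ is H\"older continuous with exponent $\gamma$, so Lemma~\ref{Holder-F} immediately yields
\[
\dim_{\rm H}F\ \geq\ \gamma\cdot\dim_{\rm H}f(F).
\]

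Next I would simply take the supremum over $\gamma\in(0,1)$ on the right-hand side. Since the left-hand side does not depend on $\gamma$, this gives
\[
\dim_{\rm H}F\ \geq\ \sup_{\gamma\in(0,1)}\gamma\cdot\dim_{\rm H}f(F)\ =\ \dim_{\rm H}f(F),
\]
which is the desired inequality.

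The only point to watch for is the remark in the paragraph preceding the lemma: the H\"older constant $C=C(\gamma)$ supplied by the definition may tend to infinity as $\gamma\nearrow 1$. However, Lemma~\ref{Holder-F} only requires the \emph{existence} of some such constant for the given $\gamma$, and its conclusion $\dim_{\rm H}F\geq \gamma\cdot\dim_{\rm H}f(F)$ does not involve $C$ at all. Thus the potential blow-up of $C(\gamma)$ is harmless, as the authors already emphasized. So there is no real obstacle; the lemma is essentially a one-line consequence of Lemma~\ref{Holder-F} together with the supremum argument above.
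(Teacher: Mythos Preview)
Your proof is correct and matches the paper's own argument exactly: the authors simply state that increasing $\gamma$ to $1$ in Lemma~\ref{Holder-F} yields the result, which is precisely your supremum-over-$\gamma$ argument. Your additional remark about the irrelevance of the blow-up of the H\"older constant $C(\gamma)$ is also consistent with the paper's comment preceding the lemma.
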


\subsection{Convergence exponent}\label{conv-sec}
 For an infinite subset $S$ of $\mathbb N$, define
\[\tau(S)=\inf\left\{t\geq0\colon\sum_{ a\in S} a^{-t}<\infty\right\}.\]
We call $\tau(S)$ a {\it convergence exponent} of $S$.
 Note 
 that $\tau(S)\in[0,1]$.
 If we write $S=\{a_n\colon n\in\mathbb N\}$, $a_1<a_2<\cdots$ then
\begin{equation}\label{formula-tau}\tau(S)=\limsup_{n\to \infty}\frac{\log n}{\log a_n},\end{equation}
see \cite[Section~2]{PS72}. The convergence exponent is often used in fractal dimension theory, see \cite{Hir73,WW08} for example. 
The next lemma can be proved by a standard covering argument by fundamental intervals.
 \begin{lem}[{\cite[Corollary~1]{Hir73}}]\label{upper} For any infinite subset $S$ of $\mathbb N$, we have \[\dim_{\rm H}\{x\in(0,1)\setminus\mathbb Q\colon \{a_n(x)\colon n\in\mathbb N\}\subset S\text{ and }a_n(x)\to\infty\text{ as }n\to\infty\}\leq\frac{\tau(S)}{2}.\]\end{lem}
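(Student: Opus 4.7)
The plan is to prove the inequality by a standard covering argument using fundamental intervals. The central identity, from Lemma~\ref{proper}, is $|I(a_1,\ldots,a_n)|^{s/2}\leq\prod_{i=1}^n a_i^{-s}$, which converts the $(s/2)$-dimensional Hausdorff mass of an $n$-th level cover into a product of one-dimensional sums $\sum a^{-s}$ whose convergence is governed by the exponent $\tau(S)$.

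Denote by $F$ the set whose Hausdorff dimension is to be bounded. First I would decompose $F$ so as to control the size of the digits. For $M,N\in\mathbb N$, set
$$F_{M,N}=\{x\in(0,1)\setminus\mathbb Q\colon a_i(x)\in S\text{ for all }i\in\mathbb N,\ a_i(x)\geq N\text{ for all }i\geq M\}.$$
Since $a_n(x)\to\infty$ for every $x\in F$, each such $x$ lies in some $F_{M,N}$, so for every fixed $N$ we have $F\subset\bigcup_{M\geq 1}F_{M,N}$. By countable stability of Hausdorff dimension, it then suffices to bound $\dim_{\rm H}F_{M,N}$ uniformly in $M$ for one judicious choice of $N$.

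Next, I would fix $s>\tau(S)$, so that $\sum_{a\in S}a^{-s}<\infty$ and the tail $\rho_N:=\sum_{a\in S,\,a\geq N}a^{-s}$ tends to $0$ as $N\to\infty$; choose $N\geq 2$ with $\rho_N<1$. For each $M$, cover $F_{M,N}$ at level $n\geq M$ by the fundamental intervals $I(a_1,\ldots,a_n)$ with $a_1,\ldots,a_{M-1}\in S$ and $a_M,\ldots,a_n\in S\cap[N,\infty)$. Lemma~\ref{proper} then gives
$$\sum|I(a_1,\ldots,a_n)|^{s/2}\leq\Bigl(\sum_{a\in S}a^{-s}\Bigr)^{M-1}\rho_N^{\,n-M+1},$$
which tends to $0$ as $n\to\infty$ because $\rho_N<1$. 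The maximum diameter of the covering intervals also tends to $0$ (since $a_i\geq N\geq 2$ for $i\geq M$ forces $|I|\leq(a_1\cdots a_{M-1})^{-2}N^{-2(n-M+1)}$), so the $(s/2)$-dimensional Hausdorff measure of $F_{M,N}$ vanishes. Hence $\dim_{\rm H}F_{M,N}\leq s/2$, countable stability yields $\dim_{\rm H}F\leq s/2$, and letting $s\downarrow\tau(S)$ completes the argument.

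The proof is essentially routine, and I anticipate no serious obstacle. The only slightly delicate point is the decomposition: one must split off the initial $M-1$ digits $a_1,\ldots,a_{M-1}$ (contributing the harmless bounded constant $(\sum_{a\in S}a^{-s})^{M-1}$) from the tail digits in $S\cap[N,\infty)$ (which contribute the geometrically small factor $\rho_N^{n-M+1}$), and this is exactly what the hypothesis $a_n(x)\to\infty$ permits.
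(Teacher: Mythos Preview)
Your argument is correct and is precisely the ``standard covering argument by fundamental intervals'' to which the paper alludes; the paper itself does not spell out a proof but merely cites \cite{Hir73}, Corollary~1. Your decomposition into the sets $F_{M,N}$, the use of Lemma~\ref{proper} to pass from $|I|^{s/2}$ to $\prod a_i^{-s}$, and the tail bound $\rho_N<1$ are exactly the ingredients one would expect in such a proof.
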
\begin{rem}The inequality in Lemma~\ref{upper} is actually the equality for any infinite subset $S$ of $\mathbb N$, known as a solution of Hirst's conjecture \cite{WW08}. The inequality suffices for the proofs of our main results. The equality for $S=\mathbb N$ is known as Good's theorem \cite[Theorem~1]{G}. \end{rem}

The next lemma links the convergence exponent to relative upper density.
We will mostly use it with $A=S$ in combination with Lemma~\ref{upper}, in order to obtain an upper bound of Hausdorff dimension.
 The case $A\subsetneq S$ will only be used in the proof of Theorem~\ref{J-thm}.

\begin{lem}\label{disc2}If $S\subset\mathbb N$ has polynomial density with exponent $\alpha\geq1$, then for any $A\subset S$ with $\overline{d}(A|S)>0$ we have
\[\tau(A)=\frac{1}{\alpha}.\]
\end{lem}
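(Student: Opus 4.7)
\begin{pf*}
My plan is to apply formula \eqref{formula-tau}, which expresses $\tau(A)$ as $\limsup_{n\to\infty}\log n/\log a_n$ for the increasing enumeration $A=\{a_1<a_2<\cdots\}$. I will establish the two inequalities $\tau(A)\leq 1/\alpha$ and $\tau(A)\geq 1/\alpha$ separately. In both arguments the logarithmic factor $(\log N)^\beta$ appearing in the polynomial density condition will be absorbed, since on the logarithmic scale it is dominated by $N^{1/\alpha}$.

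\emph{Upper bound.} Since $A\subset S$, for all sufficiently large $n$ one has
\[n=\#(A\cap[1,a_n])\leq \#(S\cap[1,a_n])\leq \frac{C_2\, a_n^{1/\alpha}}{(\log a_n)^\beta}.\]
Taking logarithms, dividing by $\log a_n$ and passing to $\limsup$ in $n$ then yields $\tau(A)\leq 1/\alpha$ via \eqref{formula-tau}.

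\emph{Lower bound.} Using $\overline{d}(A|S)>0$, I will fix $c>0$ and a sequence $N_k\to\infty$ with $\#(A\cap[1,N_k])\geq c\,\#(S\cap[1,N_k])$. Combined with the lower bound in the polynomial density condition this gives
\[n_k:=\#(A\cap[1,N_k])\geq \frac{cC_1 N_k^{1/\alpha}}{(\log N_k)^\beta}.\]
By construction $a_{n_k}\leq N_k$, hence $\log a_{n_k}\leq \log N_k$, so
\[\frac{\log n_k}{\log a_{n_k}}\geq \frac{\log n_k}{\log N_k}\geq \frac{1}{\alpha}-\frac{\beta\log\log N_k}{\log N_k}+\frac{\log(cC_1)}{\log N_k},\]
which tends to $1/\alpha$ as $k\to\infty$. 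Another appeal to \eqref{formula-tau} gives $\tau(A)\geq 1/\alpha$, completing the proof.

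There is no real obstacle here; the only subtle point is the direction of the inequality $a_{n_k}\leq N_k$ in the lower bound, which is exactly what is needed to convert the lower bound on the counting function at scale $N_k$ into a lower bound on $\log n_k/\log a_{n_k}$.
\end{pf*}
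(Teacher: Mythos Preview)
Your proof is correct. The upper bound coincides with the paper's argument. For the lower bound, however, your route is genuinely different and cleaner than the paper's. The paper observes that the positivity of $\overline{d}(A|S)$ does \emph{not} force $\limsup_{n}\#(A\cap[1,a_n])/\#(S\cap[1,a_n])>0$, and therefore abandons formula~\eqref{formula-tau} for the lower bound; instead it shows directly that $\sum_{a\in A}a^{-t}=\infty$ for every $t<1/\alpha$ by bounding $\#(A\cap[N_k^\varepsilon,N_k])$ from below and estimating the partial sum over that range. Your approach sidesteps this detour by the simple observation $a_{n_k}\le N_k$, which lets you convert the counting inequality at the externally chosen scale $N_k$ into the ratio $\log n_k/\log a_{n_k}$ required by~\eqref{formula-tau}. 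This is more elementary and avoids the auxiliary parameter $\varepsilon$ and the series estimate; the paper's argument, on the other hand, has the minor advantage of not relying on formula~\eqref{formula-tau} at all for the lower bound.
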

\begin{proof}
Write $A=\{a_n\colon n\in\mathbb N\}$, $a_1<a_2<\cdots$. Since $S$ has polynomial density with exponent $\alpha$, there exist $C_2>0$ and $\beta\geq0$ such that for all sufficiently large $n$ we have
\[\frac{\log n}{\log a_{n}}=\frac{\log \#(A\cap [1, a_{n}])}{\log a_{n}}\le \frac{\log (C_2/(\log a_{n})^{\beta})}{\log a_{n}}+\frac{1}{\alpha}.\]
Letting $n\to\infty$ and combining the result with \eqref{formula-tau} we obtain $\tau(A)\le 1/\alpha.$

In the case $A=S$, the same argument as above yields the reverse inequality $\tau(A)\ge 1/\alpha.$ In the case $A\subsetneq S$,
 the same argument does not work 
since $\overline{d}(A|S)>0$ does not imply $\limsup_{n\to\infty}\#(A\cap[1,a_{n}])/\#(S\cap[1,a_n])>0$. 
Take a strictly increasing sequence $(N_k)_{k=1}^\infty$ of positive integers such that \[\lim_{k\to\infty}\frac{\#(A\cap [1,N_k])}{\#(S\cap [1,N_k])}=\overline{d}(A|S)>0.\]  Let $\varepsilon\in(0,1/\alpha)$. Since $S$ has polynomial density with exponent $\alpha\geq1$, there exist  $C_1>0$ and $\beta\geq0$ such that for all sufficiently large $k$ we have \[\begin{split}\frac{\#(A\cap [N_k^\varepsilon,N_k])}{\#(S\cap [1,N_k])}&\geq\frac{\#(A\cap [1,N_k])}{\#(S\cap [1,N_k])}-\frac{\#(A\cap [1,N_k^\varepsilon])}{\#(S\cap [1,N_k])}\\&\geq\frac{\overline{d}(A|S)}{2}-\frac{(\log N_k)^{\beta}}{C_1N_k^{1/\alpha-\varepsilon }}\geq\frac{\overline{d}(A|S)}{3}.\end{split}\] Rearranging this gives \[\#(A\cap [N_k^\varepsilon,N_k])\geq\frac{\overline{d}(A|S)}{3}\#(S\cap [1,N_k])\geq\frac{\overline{d}(A|S)}{3}\frac{C_1N_k^{1/\alpha}}{(\log N_k)^\beta}.\] Let $0<t<1/\alpha$. For all sufficiently large $k\geq1$ we have \[ \sum_{a\in A\cap[N_k^\varepsilon,N_k]} a^{-t} >N_k^{-t}\#(A\cap [N_k^\varepsilon,N_k])\geq \frac{\overline{d}(A|S)}{3}\frac{C_1N_k^{1/\alpha-t}}{(\log N_k)^\beta},\] 
Letting $k\to\infty$ we conclude that  $\sum_{a\in A} a^{-t} =\infty$, namely $\tau(A)\geq t$. Increasing $t$ to $1/\alpha$ we obtain $\tau(A)\geq1/\alpha$.\end{proof}

\begin{rem}\label{referee-x}
The difference between the upper density and the upper Banach density appear in their relations with convergence exponent.
By Lemma~\ref{disc2}, if $A\subset\mathbb N$ and $\overline{d}(A)>0$ then $\tau(A)=1$.
It is easy to see that $d_{\rm B}(A)>0$ does not imply $\tau(A)=1$:
for example, for the set $A=\{\left(\bigcup_{n\in\mathbb N}[n^2,n^2+\sqrt{n}]\right)\cap\mathbb N\}$ we have $d_{\rm B}(A)=1$ and $\tau(A)=3/4$. 
\end{rem}

The next result on the multifractal analysis of convergence exponent for continued fractions \cite{FMSW21} is a key ingredient in the proof of Theorem~\ref{J-thm}.
\begin{lem}[{\cite[(2.2) on p.1899]{FMSW21}}]\label{fang-eq}
For all $0\leq c\leq 1$
we have
\[\dim_{\rm H}\{x\in J\colon\tau(\{a_n(x)\colon n\in\mathbb N\})=c\}=\frac{1-c}{2}.\]
\end{lem}
\begin{rem}The difference between a fractal structure of $J$ and that of $E$ appear in their relations with the multifractal spectrum of convergence exponent. Taking $S=\mathbb N$ in \eqref{equation-0} and combining the result with Lemma~\ref{disc2}, we get
\[\dim_{\rm H}\{x\in E\colon\tau(\{a_n(x)\colon n\in\mathbb N\})=1\}=\frac{1}{2},\]
which is
a sharp contrast to the equality in Lemma~\ref{fang-eq} with $c=1$.\end{rem}

\section{Three-step general construction}
 In this section we perform the 
three-step general construction of Moran fractals in $E$ described in Section~\ref{outlinepf}. 
In Section~\ref{C-const} we introduce  {\it extreme seed sets} that we are going to construct in Step~2. 
In Section~\ref{pre-lem-sec} 
we complete Step~1, and
in Section~\ref{PF-sec} 
complete Step~2.
The remaining two subsections are on Step~3 that is most technical.
In Section~\ref{insert-sec},
we describe a procedure of constructing a new Moran fractal in $E$ by adding digits into a given seed set. 
In Section~\ref{prevent-sec} we prove that an elimination map associated with the construction in Section~\ref{insert-sec} is almost Lipschitz.
\subsection{Extreme seed sets}\label{C-const}
Let $S$ be an infinite subset of $\mathbb N$.
For an infinite subset $K$ of $S$ and positive reals $t,L$ satisfying $t>L>1$, define
\begin{equation}\label{seed-def}R_{t,L}(K)=\{x\in E\colon a_n(x)\in K\cap [t^n,Lt^{n}]
\ \text{ for every}\ n \ge 1\}.\end{equation}
Note that $R_{t,L}(K)\subset J$.
Non-empty sets of this form are called {\it seed sets}. 
 We say the seed set $R_{t,L}(K)$ is an {\it extreme seed set associated with $S$} if the following conditions hold:
 \begin{itemize}
 \item[(A1)] $S$ has polynomial density with exponent $\alpha\geq1$;
 \item[(A2)] $K$ does not contain consecutive integers; \item[(A3)]
 $\overline{d}(K|S)=0;$ 
 \item[(A4)] $\dim_{\rm H}R_{t,L}(K)=1/(2\alpha)$.
 \end{itemize}

 By Lemmas~\ref{upper} and \ref{disc2}, 
 if (A1) holds then  the Hausdorff dimension of any seed set associated with $S$ does not exceed $1/(2\alpha)$. 
Hence,
extreme seed sets are of least relative density of digits (A3)
and of
maximal Hausdorff dimension (A4). Condition (A1) 
is not  essential, but 
a little facilitates 
the proof of Proposition~\ref{holder-ex-lem1} in Section~\ref{insert-sec}.

\begin{pro}
\label{seed-Prop}
 If $S\subset\mathbb N$ has polynomial density, then there exist $S_*\subset S$ and positive reals $t,L$
such that $t>L>1$, $t>2$ and 
$R_{t,L}(S_*)$ is an extreme seed set associated with $S$.
\end{pro}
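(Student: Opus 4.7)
The plan is to construct $S_*$ by selecting, within each geometric band $[t^n,Lt^n]$, a sparse collection of non-consecutive elements of $S$ of carefully prescribed cardinality $r_n$, and then to invoke Lemma~\ref{mass} to bound $\dim_{\rm H}R_{t,L}(S_*)$ from below by $1/(2\alpha)$. The matching upper bound is immediate: every point of $R_{t,L}(S_*)$ has all digits in $S$ with $a_n(x)\geq t^n\to\infty$, so Lemma~\ref{upper} combined with Lemma~\ref{disc2} (applied with $A=S$) yields $\dim_{\rm H}R_{t,L}(S_*)\leq\tau(S)/2=1/(2\alpha)$.

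First, I would fix $L>1$ large enough (depending on $\alpha$ and $\beta$) that the polynomial density hypothesis gives $\#(S\cap[t^n,Lt^n])\geq c_0\,t^{n/\alpha}/n^\beta$ for all sufficiently large $n$ and all $t$ in a suitable range, and then pick $t>L$ with $t>2$. Set $r_n=\max\{2,\lfloor t^{n/\alpha}/n^{\beta+1}\rfloor\}$ and, by a greedy choice, pick $r_n$ pairwise non-consecutive elements of $S\cap[t^n,Lt^n]$; let $S_*$ be their union over $n$. Since $Lt^n<t^{n+1}$, the bands are pairwise disjoint, $S_*$ contains no consecutive integers, and digits from different levels automatically differ, so $R_{t,L}(S_*)\subset E$. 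For $N\in[t^n,t^{n+1}]$, the sum $\sum_{i=1}^{n+1}r_i$ is essentially geometric with ratio $t^{1/\alpha}>1$, hence dominated by its last term $\asymp t^{(n+1)/\alpha}/n^{\beta+1}$, while $\#(S\cap[1,N])\gtrsim t^{n/\alpha}/n^\beta$; the ratio is $\lesssim 1/n\to 0$, giving $\overline{d}(S_*|S)=0$.

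For the dimension lower bound, $R_{t,L}(S_*)$ is a Moran fractal whose level-$n$ children of any parent $I(a_1,\dots,a_{n-1})$ are the intervals $I(a_1,\dots,a_{n-1},a_n)$ for $a_n$ ranging over the $r_n$ admissible digits in $S_*\cap[t^n,Lt^n]$. Because admissible digits differ by at least $2$, any two consecutive chosen children are separated by a skipped fundamental interval $I(a_1,\dots,a_{n-1},k)$ with $k\in[t^n,Lt^n]$; bounding previous digits uniformly by $Lt^i$ and applying Lemma~\ref{proper} gives
\[\delta_n\geq\tfrac12\prod_{i=1}^{n-1}(Lt^i+1)^{-2}\cdot(Lt^n+2)^{-2}\geq c\cdot 4^{-n}L^{-2n}t^{-n(n+1)},\]
so $-\log(r_n\delta_n)=n^2\log t+O(n)$, while $\log(r_1\cdots r_{n-1})=\sum_{i=1}^{n-1}\bigl((i/\alpha)\log t+O(\log i)\bigr)=n^2\log t/(2\alpha)+O(n\log n)$. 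Lemma~\ref{mass} now yields $\dim_{\rm H}R_{t,L}(S_*)\geq 1/(2\alpha)$, matching the upper bound.

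The main obstacle is the balancing act in choosing $r_n$: it must be sub-polynomial relative to $\#(S\cap[1,t^n])\asymp t^{n/\alpha}/n^\beta$ to drive $\overline{d}(S_*|S)$ to zero, yet large enough that $\sum_{i<n}\log r_i$ captures the leading $n^2\log t/(2\alpha)$. The scale $r_n\asymp t^{n/\alpha}/n^{\beta+1}$ sits precisely on this threshold: the extra $1/n$ factor beyond the density rate is the minimum needed to kill relative density, and it costs only $O(n\log n)$ in $\log(r_1\cdots r_{n-1})$, which is negligible against $n^2\log t/(2\alpha)$; it is crucial that the logarithmic exponent $\beta$ in the polynomial density is explicitly reflected in the denominator exponent $\beta+1$ of $r_n$. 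A secondary nuisance is verifying $\overline{d}(S_*|S)=0$ at all scales $N$ rather than only at $N=t^n$, which is handled by the geometric growth of $(r_i)$ above.
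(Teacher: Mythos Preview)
Your argument is correct and reaches the same conclusion via the same endgame (Lemma~\ref{mass} applied to the Moran fractal, with the upper bound coming from Lemmas~\ref{upper} and \ref{disc2}), but your construction of $S_*$ is genuinely different from the paper's. The paper first thins $S$ \emph{globally}: it defines a factorial-based index set $Q\subset\mathbb N$ (Lemma~\ref{find-C}), takes $S_*=\{a_n\in S:n\in Q\}$, and obtains a two-sided density bound $\#(S_*\cap[1,N])/\#(S\cap[1,N])\asymp 1/\nu(\#(S\cap[1,N]))$ via the slow-growing function $\nu$; only afterwards does it choose $t,L$ and verify that each band $[t^n,Lt^n]$ contains $\gtrsim t^{n/\alpha}/(n\log t)^{\beta+\lambda}$ elements of $S_*$. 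You instead thin \emph{locally}, selecting a prescribed number $r_n\asymp t^{n/\alpha}/n^{\beta+1}$ of non-consecutive points directly from each band. Your route is shorter and avoids the auxiliary Lemmas~\ref{simple} and \ref{find-C}; the paper's route is more modular in that $S_*$ is defined before $t,L$ and the vanishing of $\overline{d}(S_*|S)$ is independent of the band structure. One minor point to tidy: your band estimate $\#(S\cap[t^n,Lt^n])\geq c_0 t^{n/\alpha}/n^\beta$ is stated ``for all sufficiently large $n$'', but the seed set needs a choice at every level $n\geq1$; this is handled by taking $t$ large (so the polynomial-density bounds apply from $n=1$) and $L$ large enough that $c_0\geq2$, or simply by setting $r_n=2$ for finitely many initial $n$, which does not affect the asymptotics in Lemma~\ref{mass}.
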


Prior to a proof of Proposition~\ref{seed-Prop}, 
as Step~1
 we construct in Section~\ref{pre-lem-sec} a relatively thin  subset of a given infinite subset of $\mathbb N$.
The proof of Proposition~\ref{seed-Prop} is given in Section~\ref{PF-sec} as Step~2.

\subsection{Construction of a relatively thin subset (Step~1)}\label{pre-lem-sec}
The next general lemma allows us to extract
from an arbitrary infinite subset of $\mathbb N$
a set of zero relative density with definite growth bounds.
Define a function $\nu\colon [1,\infty)\to\mathbb N$ by \begin{equation}\label{mu-def}\nu(\xi)=k\ \text{ for } \xi\in[k!,(k+1)!),\ k\in\mathbb N.\end{equation}

\begin{lem}\label{find-C}Let $S$ be an infinite subset of $\mathbb N$ and write $S=\{a_n\colon n\in\mathbb N\}$, $a_1<a_2<\cdots$. There exists $S_*\subset S$ that does not contain consecutive integers and satisfies \begin{equation}\label{seed-eq1}\frac{1}{2\nu(\xi)}\leq \frac{\#\{1\leq n\leq \xi\colon a_n\in S_* \}}{\xi}\leq \frac{3}{\nu(\xi)}\end{equation} for all sufficiently large $\xi\geq1$. In particular, $\overline{d}(S_*|S)=0$.
\end{lem}

For a proof of Lemma~\ref{find-C}, we first show that $\nu$ 
is a slow increasing function with the following upper bound.
\begin{lem}\label{simple}For any $t>e$ there exists $\lambda>1$ such that \[\nu(t^n)\leq (n\log t)^{\lambda}\ \text{ for every }n\in\mathbb N.\]\end{lem}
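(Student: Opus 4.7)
The strategy is elementary and rests on Stirling's approximation. First I would unpack the definition of $\nu$: setting $k = \nu(t^n)$, the defining condition $t^n \in [k!, (k+1)!)$ yields $k! \leq t^n$, which upon taking logarithms gives $\log(k!) \leq n \log t$. A standard lower bound on the factorial (Stirling, or even a direct integral comparison $\log(k!) = \sum_{j=1}^k \log j \geq \int_1^k \log x\,dx = k\log k - k + 1$) produces an absolute constant $c > 0$ such that $\log(k!) \geq c\, k \log k$ for all $k \geq 2$. Combining the two estimates yields $k \log k \leq c^{-1} n \log t$, and for $k$ beyond an absolute threshold (so that $\log k \geq 1$) this simplifies to the linear bound $k \leq c^{-1} n \log t$. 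For $k$ below that threshold the bound $k \leq C n \log t$ is automatic since $t > e$ forces $n \log t > 1$. Altogether I expect to obtain an absolute constant $C > 0$ with $\nu(t^n) \leq C n \log t$ for every $n \geq 1$.

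The second step is to absorb $C$ into the exponent on the right-hand side of the lemma. The inequality $C n \log t \leq (n \log t)^\lambda$ is equivalent to $(\lambda - 1) \log(n \log t) \geq \log C$. Because $t > e$ forces $\log t > 1$, one has $n \log t > 1$ for every $n \geq 1$, so $\log(n \log t) > 0$; its infimum over $n \geq 1$ is attained at $n = 1$ and equals $\log \log t$, which is strictly positive. Choosing $\lambda = 1 + \max\{1, (\log C)/\log \log t\}$ then makes the inequality hold uniformly in $n$, and delivers the required exponent $\lambda > 1$ depending on $t$.

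The only point demanding mild care is the case $n = 1$: when $t$ is close to $e$, $\log \log t$ is small and $\lambda$ has to be chosen correspondingly large. This is exactly why the hypothesis is $t > e$ (rather than merely $t > 1$) and why $\lambda$ is allowed to depend on $t$. Beyond this bookkeeping I do not foresee any genuine obstacle.
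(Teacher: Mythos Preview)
Your proposal is correct and follows essentially the same two-step strategy as the paper: first establish a linear bound $\nu(t^n)\leq C\,n\log t$ for all $n$, then absorb the constant into the exponent using $\log t>1$. The only cosmetic difference is that the paper obtains the linear bound via a brief contradiction argument (if $\nu(t^n)>n\log t$ infinitely often then $t^n>\lfloor n\log t\rfloor!$ infinitely often, contradicting factorial growth), whereas you make the Stirling estimate explicit and thereby get an absolute constant $C$ rather than a $t$-dependent one; this distinction is immaterial for the conclusion.
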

\begin{proof}
We claim that $\nu(t^n)\leq n\log t$ for all $n\in\mathbb N$ but finitely many ones.
For otherwise the definition of $\nu$ in \eqref{mu-def} would yield $t^n>\lfloor n\log t\rfloor!$ for infinitely many $n$, which is a contradiction. 
There exists a constant $c(t)\geq1$ such that
$\nu(t^n)\leq c(t)n\log t$ for every $n\in\mathbb N$. Take $\lambda>1$
such that $c(t)\leq(\log t)^{\lambda-1}$. \end{proof}

\begin{proof}[Proof of Lemma~\ref{find-C}]
For each $k\in\mathbb N$, taking every $k$ elements of $\mathbb N$ with respect to the natural order one can construct a subset of $\mathbb N$ with density $1/(k+1)$. Our construction here relies on this simple observation.
Let \[Q=\bigcup_{k=2}^\infty\{k!+ik\colon i=0,\ldots, k!-1\}\subset\mathbb N.\]
 The following interpretation of $Q$ should be useful along the way. Split $[2,\infty)=\bigcup_{k=2}^\infty[k!,(k+1)!)$. For each integer $k\geq2$, further split $[k!,(k+1)!)$ into consecutive left-closed, right-open subintervals containing exactly $k$ integers:
\[[k!,(k+1)!)=[k!,k!+k)\cup[k!+k,k!+2k)\cup\cdots\cup[k!+ (k!-1)k,(k+1)!).\]
Then $Q$ consists of the left endpoints of all these subintervals. 
Note that $Q$ does not contain consecutive integers.
Set \[S_*=\{a_n\in S\colon n\in Q\}.\]
Then $S_*$ does not contain consecutive integers.

For every $\xi\geq1$ we have
\[\#\{1\leq n\leq \xi\colon a_n\in S_* \}=\#(Q\cap[1,\xi]).\]
If $\xi\geq1$ is sufficiently large then we have
\[\begin{split}\#(Q\cap[1,\xi])&= \#(Q\cap[1,(k-1)!))+\#(Q\cap[(k-1)!,k!))+\#(Q\cap[k!,\xi ])\\
&\leq \#(Q\cap[1,(k-1)!))+\frac{\#([(k-1)!,k!)\cap\mathbb N)}{k-1}+\frac{\#([k!,\xi ]\cap\mathbb N)}{k}\\
&\leq (k-1)!+(k-1)!+\frac{\#([k!,\xi]\cap\mathbb N)}{k}
\leq\frac{3\xi}{\nu(\xi)},
\end{split}\]
and 
\[\begin{split}\#(Q\cap[1,\xi])&\geq \sum_{i=2}^{k-1}\frac{\#([i!,(i+1)!)\cap\mathbb N)}{i}+\frac{\#( [k!,\xi ]\cap\mathbb N)}{k}-1\\
&\geq \sum_{i=2}^{k-1}\frac{\#([i!,(i+1)!)\cap\mathbb N)}{k}+\frac{\#( [k!,\xi ]\cap\mathbb N)}{k}-1\\
&=\frac{\#([2,\xi]\cap\mathbb N)}{k}-1\geq\frac{\xi}{2\nu(\xi)},\end{split}\]
where $k\geq1$ is the integer with $\xi\in[k!,(k+1)!)$.
This verifies \eqref{seed-eq1}.

The ordering of elements of $S$ as in the statement of Lemma~\ref{find-C} implies 
that for any $N\geq1$ with 
$S\cap[1,N]\neq\emptyset$, we have 
\[\#(S_*\cap[1,N])=\#\{1\leq n\leq
\#(S\cap[1,N])\colon a_n\in S_* \}.\]
By the second inequality in \eqref{seed-eq1}, for all sufficiently large $N\geq1$ we have
\[\frac{\#(S_*\cap[1,N])}{\#(S\cap[1,N])}=\frac{\#\{1\leq n\leq
\#(S\cap[1,N])\colon a_n\in S_* \}}{\#(S\cap[1,N]) }
\leq\frac{3}{\nu(\#(S\cap[1,N]) )}.\]
Letting $N\to\infty$ we obtain
$\overline{d}(S_*|S)=0$ as required.
\end{proof}

\subsection{Proof of Proposition~\ref{seed-Prop} (Step~2)}\label{PF-sec} 
Let $S\subset\mathbb N$ have polynomial density with exponent $\alpha\geq1$ and write $S=\{a_n\colon n\in\mathbb N\}$, $a_1<a_2<\cdots.$  
Let $S_*$ be a subset of $S$ for which the conclusion of Lemma~\ref{find-C} holds.
It suffices to show that $\dim_{\rm H}R_{t,L}(S_*)\geq1/(2\alpha)$ for some $t,L$ with $t>L>1$ and $t>2$.

By the ordering of elements of $S$ as above, for any $N\geq1$ with $S\cap [1,N]\neq\emptyset$ we have
$\#(S_*\cap [1,N])
=\#\{1\leq n\leq \#(S\cap [1,N])\colon a_n\in S_*\}$.
Moreover \eqref{seed-eq1} gives
\begin{equation}\begin{split}\label{seed-eq8}\frac{\#(S\cap [1,N])}{
2\nu(\#(S\cap [1,N])) }\leq\#(S_*\cap [1,N])
&\leq \frac{3\#(S\cap [1,N])}{\nu(\#(S\cap [1,N]))}.\end{split}\end{equation}

Let $C_1>0$, $C_2>0$, $\beta\geq0$ be the constants in the definition of polynomial density of $S$ with exponent $\alpha$.
Set 
\begin{equation}\label{L}
L=(12C_1^{-1}C_22^\beta)^\alpha>1.\end{equation}
Let $t>\max\{L,e\}$ satisfy
 \begin{equation}\label{seed-eq4}\nu(\#(S\cap[1,t]))\geq2.\end{equation}
 Since $t>L$, if $t$ is sufficiently large then for every $n\in\mathbb N$ we have
\begin{equation}\label{seed-eq0}C_1C_2^{-1}2^{-\beta}L^{-1/\alpha}\leq\frac{\#(S\cap [1,t^n])}{\#(S\cap [1,Lt^n])}\leq C_1^{-1}C_22^{\beta}L^{-1/\alpha}.\end{equation}
For each $n\in\mathbb N$, fix $k=k(n)\in\mathbb N$ such that
$k!\leq\#(S\cap [1,t^n])<(k+1)!.$
If $n$ is sufficiently large so that
$k+2\geq (C_1C_2^{-1}2^{-\beta}L^{-1/\alpha})^{-1}$, then
\[k!\leq\#(S\cap [1,t^n])\leq\#(S\cap [1,Lt^n])\leq\frac{\#(S\cap [1,t^n])}{C_1C_2^{-1}2^{-\beta}L^{-1/\alpha}}<(k+2)!.\]
By the definition of $\nu$ in \eqref{mu-def}, this estimate implies
 \begin{equation}\label{seed-eq2}0\leq\nu(\#(S\cap [1,Lt^n]))-\nu(\#(S\cap [1,t^n]))\leq 1.\end{equation}
By $t>e$ and Lemma~\ref{simple}, there exists $\lambda>1$ such that
for every $n\in\mathbb N$, 
\begin{equation}\label{seed-eq5}\nu(\#(S\cap[1,t^n]))\leq\nu(t^n)\leq C_1(n\log t)^{\lambda }.\end{equation}

 Using \eqref{seed-eq8}, \eqref{seed-eq4} and then \eqref{seed-eq0}, \eqref{seed-eq2}, 
 for every $n\in\mathbb N$ we have
 \[\begin{split}\#(S_*\cap [t^{n}, Lt^{n}])&\geq \#(S_*\cap [1,Lt^n])-\#(S_*\cap [1,t^n])\\
&\geq\frac{\#(S\cap[1,Lt^n])}{ 2\nu(\#(S\cap[1,Lt^n]))}-\frac{3\#(S\cap[1,t^n])}{\nu(\#(S\cap[1,t^n]))}\\
&\geq\frac{C_1C_2^{-1} 2^{-\beta}L^{1/\alpha}\#(S\cap[1,t^n])}{ 2(\nu(\#(S\cap[1,t^n])+1) }-\frac{3\#(S\cap[1,t^n])}{\nu(\#(S\cap[1,t^n]))}.\end{split}\]
By \eqref{seed-eq4}, the last expression is bounded from below by
\[\#(S\cap[1,t^n])
\left(\frac{C_1C_2^{-1}2^{-\beta}
L^{1/\alpha} }{3\nu(\#(S\cap[1,t^n])) }-\frac{3}{\nu(\#(S\cap[1,t^n]))}\right),\]
which equals $\#(S\cap[1,t^n])/\nu(\#(S\cap[1,t^n]))$ by \eqref{L}. Set $\rho=\beta+\lambda$.
Using \eqref{seed-eq5} and the assumption that $S$ has polynomial density, we obtain 
\begin{equation}\label{low-estimate}\begin{split}\#(S_*\cap [t^{n}, Lt^{n}])\geq\frac{\#(S\cap[1,t^n])}{\nu(\#(S\cap[1,t^n]))}&\geq C_1\frac{t^{\frac{n}{\alpha}}}{(n\log t)^{\beta}}\cdot\frac{1}{C_1(n\log t)^{\lambda }}\\
&=\frac{t^{\frac{n}{\alpha}}}{(n\log t)^{\rho }}.\end{split}\end{equation}

 We now set
\[F=\bigcap_{n= 1}^\infty\bigcup_{\substack{ a_i\in {S_*\cap [t^{i}, Lt^{i}]}\\ 1\leq i\leq n}}\overline{I(a_{1},\ldots, a_n)},\]
where the bar denotes the closure.
Since $t>L$, the intervals $[t^{n}, Lt^{n}]$, $n\in\mathbb N$ are pairwise disjoint.
The Moran fractal $F$ is contained in $R_{t,L}(S_*)$
up to countable sets consisting of the boundary points of the level intervals.
For every $n\geq2$ and every $(n-1)$-th level interval of $F$, the number of $n$-th level intervals contained in that interval is at least $ {\lfloor t^{\frac{n}{\alpha}}/(n\log t)^{\rho}\rfloor}$ by \eqref{low-estimate}.
Since $S_*$ does not contain consecutive integers, these $n$-th level intervals are separated at least by some $n$-th fundamental intervals. Hence, the length of each
gap between the $n$-th level intervals is at least
\[\min\{|I(a_{1},\ldots, a_{n})|
\colon t^{i}\leq a_i\le Lt^{i} \ \text{ for } 1\le i\le n\}.\]
By Lemma~\ref{proper}, $t>L$ and $t>2$, we have
\[|I(a_1,\ldots, a_n)|\ge \frac{1}{2}\prod_{i=1}^{n}\frac{1}{(a_{i}+1)^{2}}\ge  \frac{1}{2}\prod_{i=1}^{n}\frac{1}{(Lt^{i}+1)^{2}}\ge  \frac{1}{2}\prod_{i=1}^{n}\frac{1}{(Lt^{i+1})^{2}}.\]
So, $F$ is a Moran fractal with parameters
$({\lfloor t^{\frac{n}{\alpha}}/(n\log t)^{\rho }\rfloor} )_{n=1}^\infty$, 
$((1/2)\prod_{i=1}^{n}(Lt^{i+1})^{-2})_{n=1}^\infty$.  
Lemma~\ref{mass} gives
\[\dim_{\rm H} F\ge
\liminf_{n\to \infty}\frac{\log(2 \prod_{i=1}^{n-1} {\lfloor t^{\frac{i}{\alpha}}/(i\log t)^{\rho }\rfloor} )}{-\log \left((\lfloor t^{\frac{n}{\alpha}}\rfloor/(n\log t)^{\rho })\prod_{i=1}^{n}(Lt^{i+1})^{-2}\right)}.\]

For two eventually positive functions $f$, $g$ on $\mathbb N$, let us use the expression
$f(n)\sim g(n)$ to indicate that $f(n)/g(n)\to1$ as $n\to\infty$.
 Direct calculations show that
\[\begin{split}
\log\left(2 \prod_{i=1}^{n-1} {\left\lfloor\frac{ t^{\frac{i}{\alpha}}}{(i\log t)^{\rho}}\right\rfloor}\right)&=\log 2+\sum_{i=1}^{n-1}\log \left\lfloor\frac{ t^{\frac{i}{\alpha}}}{(i\log t)^{\rho }}\right\rfloor\\
&\sim\log 2+\sum_{i=1}^{n-1}\left(\log t^{\frac{i}{\alpha}}-\log (i\log t)^{\rho }\right)\sim \frac{1}{2\alpha}n^2\log t,
\end{split}\]
and 
\[\begin{split}-\log \left(\frac{\lfloor t^{\frac{n}{\alpha}}\rfloor}{(n\log t)^{\rho }}\prod_{i=1}^{n}\frac{1}{(Lt^{i+1})^{2}}\right)&{\sim}-\frac{n}{\alpha}\log t+\rho \log (n \log t)+\sum_{i=1}^n 2\log (Lt^{i+1})\\
&\sim n^2 \log t.\end{split}\]
 Combining these two expressions yields $\dim_{\rm H}R_{t,L}(S_*)\geq\dim_{\rm H}F\ge1/(2\alpha)$. This completes the proof of Proposition~\ref{seed-Prop}. \qed

 \medskip

In \cite{R85}, Ramharter proved $\dim_{\rm H} J=\dim_{\rm H}E=1/2$.
Combining Proposition~\ref{seed-Prop} with statements on convergence exponent in Section~\ref{conv-sec}, we obtain a refinement of Ramharter’s result.
\begin{pro}\label{E-cor}
If $S\subset\mathbb N$ has polynomial density with exponent $\alpha\geq1$, then 
\[\begin{split}\dim_{\rm H}\{x\in J\colon \{a_n(x)\colon n\in\mathbb N\}\subset S\}&=\dim_{\rm H}\{x\in E\colon \{a_n(x)\colon n\in\mathbb N\}\subset S\}\\&=\frac{1}{2\alpha}.\end{split}\]\end{pro}
\begin{proof}On the one hand, by Lemmas~\ref{upper} and \ref{disc2} we have
\[\dim_{\rm H}\{x\in E\colon \{a_n(x)\colon n\in\mathbb N\}\subset S\}\leq\frac{1}{2\alpha}.\]
On the other hand, 
Proposition~\ref{seed-Prop} implies \[\dim_{\rm H}\{x\in J\colon \{a_n(x)\colon n\in\mathbb N\}\subset S\}\geq\frac{1}{2\alpha}.\] 
Since $J\subset E$, these two inequalities yield the desired equalities. \end{proof}

\subsection{Adding digits into seed sets (Step~3)}\label{insert-sec}
Let $A$, $B$ be infinite subsets of $\mathbb N$.
Let $t>L>1$ and suppose $R_{t,L}(B)\neq\emptyset$. 
We add elements of $A$ into the digit sequences of points in the seed set $R_{t,L}(B)$, and construct a new subset of $(0,1)\setminus\mathbb Q$
in the following manner.

Let $M_0=0$, and let $(M_k)_{k=1}^\infty$ be a strictly increasing sequence of positive integers. Let $(W_k)_{k=1}^\infty$ be a sequence of finite subsets of $A$. 
Let $y\in R_{t,L}(B)$ and put $a_0(y)=0$ for convenience.
For each $k\geq1$ with $W_k\neq\emptyset$, we
 add the elements of $W_{k}$ into the digit sequence $(a_n(y))_{n=1}^\infty$ in the increasing order to define a new sequence 
 \[\begin{split}&\ldots,a_{M_{k}-1}(y),\ a_{M_{k}}(y),\ w_{1}^{(k)},\ldots,w_{i(k)}^{(k)},\ a_{M_{k}+1}(y),\ldots\\
&\ldots,a_{M_{k+1 }}(y),\ w_{1}^{(k+1)},\ldots,w_{i(k+1)}^{(k+1)},\ a_{M_{k+1 }+1}(y),\ldots\end{split}\]
 where \[W_k=\{w^{(k)}_i\colon i=1,\ldots,i(k)\},\ w^{(k)}_1<\cdots <w^{(k)}_{i(k)}.\]
Let $x(y)$ denote the point in $(0,1)\setminus\mathbb Q$ whose regular continued fraction expansion \eqref{RCF} is given by this new sequence. Let $R_{t,L}(B,A,(M_k)_{k=1}^\infty,(W_k)_{k=1}^\infty)$ denote the collection of these points:
\[R_{t,L}(B,A,(M_k)_{k=1}^\infty,(W_k)_{k=1}^\infty)=\{x(y)\in (0,1)\setminus\mathbb Q\colon y\in R_{t,L}(B)\}.\] 
The map $y\in R_{t,L}(B)\mapsto x(y)\in R_{t,L}(B,A,(M_k)_{k=1}^\infty,(W_k)_{k=1}^\infty)$ is bijective.
Let \[f_{t,L,B,A,(M_k)_{k=1}^\infty,(W_k)_{k=1}^\infty}\colon  R_{t,L}(B,A,(M_k)_{k=1}^\infty,(W_k)_{k=1}^\infty)\to R_{t,L}(B)\] denote the inverse of this map. We call $f_{t,L,B,A,(M_k)_{k=1}^\infty,(W_k)_{k=1}^\infty}$ an {\it elimination map}. 
Clearly, if $A\cap B=\emptyset$ and the elements of $(W_k)_{k=1}^\infty$ are pairwise disjoint then $R_{t,L}(B,A,(M_k)_{k=1}^\infty,(W_k)_{k=1}^\infty)$ is contained in $E$. Moreover, the elimination map $f$ sends $x\in R_{t,L}(B,A,(M_k)_{k=1}^\infty,(W_k)_{k=1}^\infty)$ to $f(x)\in R_{t,L}(B)$ whose digit sequence is given by  eliminating elements of $\bigcup_{k=1}^\infty W_k$ from the digit sequence of $x$. 

\subsection{Almost Lipschitzness of the elimination map (Step~3)}\label{prevent-sec}
We need to deduce a lower bound of 
the Hausdorff dimension of the set $R_{t,L}(B,A,(M_k)_{k=1}^\infty,(W_k)_{k=1}^\infty)$ constructed in Section~\ref{insert-sec}. This can be done by evaluating the H\"older exponent of the elimination map.
This map is merely H\"older in general and as a result the Hausdorff dimension of  $R_{t,L}(B,A,(M_k)_{k=1}^\infty,(W_k)_{k=1}^\infty)$ may drop from that of $R_{t,L}(B)$. However, under appropriate choices of $(M_k)_{k=1}^\infty$ and $(W_k)_{k=1}^\infty$, the H\"older exponent can be made closer and closer to $1$ on deeper levels of $R_{t,L}(B)$, so that the elimination map becomes almost Lipschitz, as stated in the  proposition below.

\begin{pro}\label{holder-ex-lem1}
Let $(\varepsilon_k)_{k=1}^\infty$ be a decreasing sequence of positive reals converging to $0$.
Let $A$, $B$ be infinite subsets of $\mathbb N$
and suppose $B$ does not contain consecutive integers.
Let $t>L>1$, $t>2$ and suppose $R_{t,L}(B)\neq\emptyset$. Let $M_0=0$ and let $(M_k)_{k=1}^\infty$ be a strictly increasing sequence of positive integers, and
let $(W_k)_{k=1}^\infty$ be a sequence of finite subsets of 
$A$ such that for every $k\geq1$, 
\begin{equation}\label{holder-gen-eq0}\prod_{i\in W_{k}}\frac{1}{(i+1)^{2}}\geq\left(\prod_{i=M_{k-1}+1}^{M_k}\frac{1}{t^{2i}} \right)^{3\varepsilon_k}\ \text{ if }W_k\neq\emptyset\end{equation}
and
\begin{equation}\label{holder-ineq0} \frac{1}{2}\prod_{i=1}^{n+1}\frac{1}{(t^{i+2})^2}\cdot\left(\prod_{i=1}^{n}\frac{1}{t^{2i}}\right)^{3\varepsilon_k}\geq\left(\prod_{i=1}^{n}\frac{1}{t^{2i}}\right)^{1+4\varepsilon_k}\ \text{ for every }n\geq M_k.\end{equation}
Then the elimination map $f=f_{t,L,B,A,(M_k)_{k=1}^\infty,(W_k)_{k=1}^\infty}$ is almost Lipschitz.
 \end{pro}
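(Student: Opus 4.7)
The plan is to analyze, for two distinct points $x_1,x_2\in R_{t,L}(B,A,(M_k)_{k=1}^\infty,(W_k)_{k=1}^\infty)$, how close their preimages $y_1=f(x_1),y_2=f(x_2)\in R_{t,L}(B)$ must be in comparison with $|x_1-x_2|$. Let $N$ be the largest integer for which $x_1$ and $x_2$ share their first $N$ regular continued fraction digits. Since every point of the domain carries the same inserted blocks $W_k$ at the same relative positions, the $(N+1)$-th digit cannot lie in an inserted block; it must correspond to an original digit $a_{n+1}(y_i)$ for some $n\geq 0$, with $a_{n+1}(y_1)\neq a_{n+1}(y_2)$ and both values lying in $B\cap[t^{n+1},Lt^{n+1}]$. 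Let $k\geq 0$ be the unique integer with $M_k\leq n<M_{k+1}$.

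Assume first $k\geq 1$. Using Lemma~\ref{proper} together with the lower bound $a_l(y_i)\geq t^l$ on digits of the seed set gives
\[|y_1-y_2|\leq |I(a_1(y_1),\ldots,a_n(y_1))|\leq \prod_{l=1}^n t^{-2l}.\]
For a matching lower bound on $|x_1-x_2|$, I exploit that $B$ contains no consecutive integers: some $b''\notin B$ lies strictly between the two differing $(N+1)$-st digits, and satisfies $b''\leq Lt^{n+1}+1\leq t^{n+2}$, so the fundamental subinterval $I(a_1,\ldots,a_N,b'')$ separates $x_1$ from $x_2$. Applying Lemma~\ref{proper} and the bounds $a_l(y_i)+1\leq t^{l+2}$ yields
\[|x_1-x_2|\geq \frac{1}{2}\prod_{l=1}^{n+1}t^{-2(l+2)}\cdot\prod_{j=1}^{k}\prod_{w\in W_j}(w+1)^{-2}.\]

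Applying hypothesis \eqref{holder-gen-eq0} to the current block $W_k$, and noting that $\prod_{i=M_{k-1}+1}^{M_k}t^{-2i}\geq \prod_{i=1}^n t^{-2i}$ (fewer factors, each in $(0,1)$), gives $\prod_{w\in W_k}(w+1)^{-2}\geq(\prod_{i=1}^n t^{-2i})^{3\varepsilon_k}$. Setting $D_k=\prod_{j=1}^{k-1}\prod_{w\in W_j}(w+1)^{-2}$ and invoking hypothesis \eqref{holder-ineq0} (valid because $n\geq M_k$), the lower bound for $|x_1-x_2|$ upgrades to
\[|x_1-x_2|\geq D_k\cdot\left(\prod_{i=1}^n t^{-2i}\right)^{1+4\varepsilon_k},\]
which, combined with the upper bound on $|y_1-y_2|$, yields $|y_1-y_2|\leq (|x_1-x_2|/D_k)^{1/(1+4\varepsilon_k)}$.

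To deduce almost Lipschitzness, fix $\gamma\in(0,1)$ and choose $K$ large enough that $\gamma_k=1/(1+4\varepsilon_k)$ exceeds $\gamma$ by a definite margin for all $k\geq K$. For $k\geq K$ the previous estimate rewrites as $|y_1-y_2|\leq D_k^{-\gamma_k}\,|x_1-x_2|^{\gamma_k-\gamma}\,|x_1-x_2|^\gamma$, and the task is to absorb $D_k^{-\gamma_k}\,|x_1-x_2|^{\gamma_k-\gamma}$ into a $\gamma$-dependent but $k$-independent constant $C_\gamma$. This is accomplished by pairing the natural upper bound $|x_1-x_2|\lesssim D_k\prod_{l=1}^n t^{-2l}$ with the bound on $D_k^{-1}$ obtained by applying \eqref{holder-gen-eq0} to each $W_j$ with $j<k$: once $\varepsilon_K$ is sufficiently small relative to $1-\gamma$, the rapid decay of $|x_1-x_2|^{\gamma_k-\gamma}$ outweighs the growth of $D_k^{-\gamma_k}$. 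For the finitely many shallow cases $k<K$ (including $k=0$), all leading digits come from a bounded set, so $(x_1,x_2)$ lies in one of finitely many top-level cylinders on each of which $|x_1-x_2|$ is bounded below by a positive constant, making the H\"older estimate trivial. I expect the quantitative balancing of $D_k^{-\gamma_k}$ against $|x_1-x_2|^{\gamma_k-\gamma}$ in the deep regime to be the principal technical obstacle.
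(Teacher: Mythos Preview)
Your proposal is correct and uses the same core ingredients as the paper (the fundamental–interval bounds of Lemma~\ref{proper}, the separating subinterval coming from the assumption that $B$ has no consecutive integers, and hypotheses \eqref{holder-gen-eq0}, \eqref{holder-ineq0}). The organization, however, differs in one important respect that creates unnecessary work.

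You let $k$ be the \emph{depth level} of the pair, i.e.\ the integer with $M_k\le n<M_{k+1}$; you then apply \eqref{holder-gen-eq0} only to the last inserted block $W_k$ and carry the earlier blocks as the level-dependent constant $D_k=\prod_{j<k}\prod_{w\in W_j}(w+1)^{-2}$. This forces a second stage in which you must show $D_k^{-\gamma_k}|x_1-x_2|^{\gamma_k-\gamma}$ is bounded uniformly in $k$, which you rightly flag as the ``principal technical obstacle''. (Your sketch of this step is essentially right; note that the bound $|x_1-x_2|\le\prod_{l=1}^n t^{-2l}$ already suffices, and the extra factor $D_k$ you wrote in the upper bound for $|x_1-x_2|$ is neither needed nor quite correct as stated.)

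The paper eliminates this obstacle by decoupling the two roles of $k$. It \emph{fixes} $k$ once and for all and proves a single H\"older estimate with exponent $1/(1+4\varepsilon_k)$ and a constant depending only on $k$, valid for \emph{every} pair with $s(y_1,y_2)=n\ge M_k$ and associated depth $q\ge k$ (your $k$). The trick is to apply \eqref{holder-gen-eq0} to \emph{all} inserted blocks $W_1,\ldots,W_q$, obtaining the factor $\prod_{j=1}^q\bigl(\prod_{i=M_{j-1}+1}^{M_j}t^{-2i}\bigr)^{3\varepsilon_j}$, and then use the monotonicity $\varepsilon_j\le\varepsilon_k$ for $j\ge k+1$ to replace each exponent $3\varepsilon_j$ by $3\varepsilon_k$. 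The blocks with $j>k$ are thereby absorbed into $(\prod_{i=1}^n t^{-2i})^{3\varepsilon_k}$, and only the first $k$ blocks survive as a genuinely $k$-dependent (but depth-independent) constant. After invoking \eqref{holder-ineq0} one obtains $|f(x_1)-f(x_2)|\le C_k|x_1-x_2|^{1/(1+4\varepsilon_k)}$ directly, and letting $k\to\infty$ gives almost Lipschitzness with no further balancing.
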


Regarding a proof of this proposition,
 a key observation is  that the influence of adding the elements of $W_k$ is absorbed into the contribution from the digits $a_{M_{k-1}+1}(y),\ldots,a_{M_k}(y)$,  $y\in R_{t,L}(B)$ by virtue of  \eqref{holder-gen-eq0}.
The role of \eqref{holder-ineq0} is auxiliary, to be used after \eqref{holder-gen-eq0} is.

\begin{proof}[Proof of Proposition~\ref{holder-ex-lem1}]

Let us abbreviate $R_{t,L}(B)$,
 $R_{t,L}(B,A,(M_k)_{k=1}^\infty,(W_k)_{k=1}^\infty)$
 to  $R(B)$,
 $R(B,A)$ respectively.
Let $k\in\mathbb N$.
For a pair $y_1,y_2$ of distinct points in $R(B)$,
let $s(y_1,y_2)$ denote the minimal  integer $n\geq0$ such that $a_{n+1}(y_1)\neq a_{n+1}(y_2).$
Take an integer $N\geq M_{k}$ such that for any pair
$y_1,y_2$ of distinct points in $R(B)$ with $s(y_1,y_2)<M_{k}$,
there exists $i\in\{1,\ldots,N\}$ such that
$a_i(f^{-1}(y_1))\neq a_i(f^{-1}(y_2))$.
Since $\sup_{x\in R(B,A)}\max\{a_1(x),\ldots,a_{N}(x)\}$ is finite, 
there is a constant $C>0$ depending on $N$ such that for points
$x_1, x_2$ in $R(B,A)$ satisfying $a_i(x_1)\neq a_i(x_2)$
for some $i\leq N$, we have 
\begin{equation}\label{Holder-initial}|f(x_1)-f(x_2)|\leq 1\leq C|x_1-x_2|.\end{equation}

Let $x_1, x_2\in R(B,A)$ be distinct and put
 $y_1=f(x_1)$, $y_2=f(x_2)$. 
If $s(y_1,y_2)<M_{k}$ then
\eqref{Holder-initial}
gives $|y_1-y_2|\leq C|x_1-x_2|$.
Suppose $s(y_1,y_2)=n\geq M_{k}$. 
Without loss of generality we may assume $a_{n+1}(y_1)<a_{n+1}(y_2).$ 
For each $q\in\mathbb N$, set
 $m_q=\#W_q$.
 Let $q\geq k$ be the integer such that $M_q\le n< M_{q+1}.$ The definition of the elimination map $f$ implies
\[a_i(x_1)=a_i(x_2)\ \text{ for } i=1,\ldots,n+m_ {1}+\cdots+m_{q},\]
and  
\[a_{n+m_1+\cdots+m_q+1}(x_1)=a_{n+1}(y_1)<a_{n+1}(y_2)=a_{n+m_1+\cdots+m_q+1}(x_2).\]
Since $x_1$ and $x_2$ 
are contained in 
$I(a_1(x_1),\ldots,a_{n+m_1+\cdots+m_q}(x_1) )$ and 
$B$ does not contain consecutive integers, we have
$a_{n+1}(y_2)-a_{n+1}(y_1)\geq2$.
Then $x_1$ and $x_2$ are separated by the fundamental interval 
\[I(a_1(x_1),\ldots, a_{n+m_1+\cdots+m_q}(x_1), a_{n+m_1+\cdots+m_q+1}(x_1)+1).\]

We estimate $|x_1-x_2|$ from below and $|y_1-y_2|$ from above. Using the first inequality in Lemma~\ref{proper} and
$a_i(y_1)\leq Lt^i$ for $1\leq i\leq  n+1$, 
we have
\[\begin{split}|x_1-x_2|&\ge |I(a_1(x_1),\ldots,a_{n+m_1+\cdots+m_q}(x_1), a_{n+m_1+\cdots+m_q+1}(x_1)+1)|\\
&\geq\frac{1}{2\left((a_1(x_1)+1)\cdots (a_{n+m_1+\cdots+m_q}(x_1)+1)(a_{n+m_1+\cdots+m_q+1}(x_1)+1)\right)^{2}}\\
&= \frac{1}{2}\prod_{i=1}^{n+1}\frac{1}{(a_i(y_1)+1)^2}\cdot\prod_{j=1}^q\prod_{i\in W_j}\frac{1}{(i+1)^2}\\
&\geq \frac{1}{2} \prod_{i=1}^{n+1}\frac{1}{(Lt^i+1)^{2}}\cdot\prod_{j=1}^q\left(\prod_{i=M_{j-1}+1}^{M_j}\frac{1}{t^{2i}}\right)^{3\varepsilon_j}\\
&= \frac{ D_k}{2} \prod_{i=1}^{n+1}\frac{1}{(Lt^i+1)^{2}}\cdot
\prod_{j=k+1}^q\left(\prod_{i=M_{j-1}+1}^{M_j}\frac{1}{t^{2i}}\right)^{3\varepsilon_j},\end{split}\]
where 
\[D_k=\prod_{j=1}^{k}\left(\prod_{i=M_{j-1}+1}^{M_j}\frac{1}{t^{2i}}\right)^{3\varepsilon_j}.\]
 To deduce the last inequality we have used  \eqref{holder-gen-eq0}.
Using $t>L$ and $t>2$ 
further to bound the last expression, we obtain 
\begin{equation}\label{holder-ineq3}
\begin{split}|x_1-x_2|&\geq \frac{D_k}{2} \prod_{i=1}^{n+1}\frac{1}{(Lt^i+1)^{2}}
\cdot \prod_{j=1}^q\left(\prod_{i=M_{j-1}+1}^{M_j}\frac{1}{t^{2i}}\right)^{3\varepsilon_{k}}\\
&\geq \frac{D_k}{2} \prod_{i=1}^{n+1}\frac{1}{(t^{i+2})^2}
\cdot\left(\prod_{i=1}^{n}\frac{1}{t^{2i}}\right)^{3\varepsilon_{k}}.\end{split}\end{equation}
Since $s(y_1,y_2)=n\geq M_{k}$, 
\eqref{holder-ineq0} and 
\eqref{holder-ineq3} yield
\begin{equation}\label{holder-ineq3.5}|x_1-x_2|\geq D_k\left(\prod_{i=1}^n\frac{1}{t^{2i}} \right)^{1+4\varepsilon_{k}}.
\end{equation}
The second inequality in Lemma~\ref{proper} gives
\begin{equation}\label{holder-ineq4}\prod_{i=1}^n\frac{1}{t^{2i}} \geq 
\prod_{i=1}^n\frac{1}{a_i(y_1)^{2}}
\geq |I(a_1(y_1),\ldots,a_n(y_1))|\geq |y_1-y_2|.\end{equation}
Combining \eqref{holder-ineq3.5} and \eqref{holder-ineq4} we obtain
\[|x_1-x_2|\ge D_k|y_1-y_2|^{1+4\varepsilon_{k}}.\]
Setting $C_{k}=D_k^{\frac{1}{1+4\varepsilon_{k}}}$ yields 
 $|f(x_1)-f(x_2)|\leq C_k|x_1-x_2|^{\frac{1}{1+4\varepsilon_k}}$. 
Since $\varepsilon_k\to0$ as $k\to\infty$, $f$ is almost Lipschitz. The proof of Proposition~\ref{holder-ex-lem1} is complete.\end{proof}

\section{Proofs of the main results }\label{sec-proof}
In this section we complete the proofs of our main results, 
following the three steps developed in Section~3.
 In Section~\ref{gen-sec}
 we prove Theorem~\ref{cor-FS-R}, which implies
 Theorem~\ref{cor-FS}(a).
 We prove Theorem~\ref{cor-FS}(b) in Section~\ref{pf-sec}. Central to these proofs is to 
 verify the key condition \eqref{holder-gen-eq0}.
 To this end, we carefully choose positions of digits to be added in Step~3, taking advantage of the non-monotonicity of digits for points in $E\setminus J$. Finally in Section~\ref{pfthmj}
  we prove Theorem~\ref{J-thm}.

\subsection{Proof of Theorem~\ref{cor-FS-R}}\label{gen-sec}
Let $S\subset \mathbb N$ have polynomial density with exponent $\alpha\geq1$. By Proposition~\ref{seed-Prop},
there exist $S_*\subset S$ not containing consecutive integers
and positive reals $t$, $L$ with $t>L>1$ and $t>2$
such that $R_{t,L}(S_*)$ is an extreme seed set of Hausdorff dimension $1/(2\alpha)$.
Let $A\subset S$
satisfy $\overline{d}(A|S)>0$. We have
\begin{equation}\label{dim-equality}\overline{d}((A\setminus S_*)|S)=\overline{d}(A|S).\end{equation}

Let $(\varepsilon_k)_{k=1}^\infty$ 
 be a decreasing sequence of positive reals converging to $0$. 
We set $M_0=0$ for convenience, and choose a strictly increasing sequence $(M_k)_{k=1}^\infty$ of positive integers greater than one such that all the following conditions hold:
\begin{equation}\label{up-eq1*}\lim_{k\to\infty}\frac{\#((A\setminus S_*)\cap[1,\sqrt{M_k}])}{\#(S\cap[1,\sqrt{ M_k}])}=\overline{d}(A\setminus S_*|S);\end{equation}
\begin{equation}\label{MK-newnew}\lim_{k\to\infty}\frac{t^{M_{k-1}}}{\sqrt{M_k}^{1/\alpha}}= 0;\end{equation}
\begin{equation}\label{up-eq2*}
\ Lt^{M_{k-1}}+ \sqrt{M_k}<t^{M_k}\ \text{ for every }k\geq1;\end{equation}
\begin{equation}\label{up-eq3*} \frac{1}{(Lt^{M_{k-1}}+\sqrt{M_k}+1)^{2\sqrt{M_k}}}
 \geq \frac{1}{t^{2\varepsilon_k M_k^2}}\geq\left(\prod_{i=M_{k-1}+1}^{M_k}\frac{1}{t^{2i}} \right)^{3\varepsilon_k}\ \text{ for every }k\geq1;\end{equation}
 \begin{equation}\label{up-eq4*}\inf_{n\geq M_k} \prod_{i=1}^{n+1}\frac{1}{(t^{i+2})^2}\cdot\left(\prod_{i=1}^{n }\frac{1}{t^{2i}}\right)^{-1-\varepsilon_k}\geq2 \ \text{ for every }k\geq1.\end{equation}
Clearly \eqref{MK-newnew}, \eqref{up-eq2*}, \eqref{up-eq3*}  hold if $M_k$ is chosen to be sufficiently large compared to 
$M_{k-1}$. The role of
  \eqref{up-eq4*} is auxiliary, which holds if $M_k$ is sufficiently large depending on $\varepsilon_k$. 

For each $k\in\mathbb N$
we put 
\[I_k=[Lt^{M_{k-1}}+1,Lt^{M_{k-1}}+\sqrt{M_k}]\]
and
 \[W_k=(A\setminus S_*)\cap I_k.\]
 Taking a subsequence of $(M_k)_{k=1}^\infty$ if necessary we may assume
\begin{equation}\label{I-limit}\lim_{k\to\infty}\frac{\min I_k}{\max I_k}=0.\end{equation}
Since $S$ has polynomial density, 
it intersects both $I_k$ and $[1,\sqrt{M_k}]$ for all sufficiently large $k$.
In the next two paragraphs we show that the two differences
\[\frac{\#(A\cap I_k)}{\#(S\cap I_k)}-\frac{\#W_k}{\#(S\cap I_k)}=\frac{\#(S_*\cap I_k)}{\#(S\cap I_k)}\]
 and 
  \[\begin{split}
\left|\frac{\#W_k}{\#(S\cap I_k)}-\frac{\#((A\setminus S_*)\cap[1,\sqrt{ M_k}])}{\#(S\cap[1,\sqrt{M_k}])}\right|\leq
&\left|\frac{\#W_k-\#((A\setminus S_*)\cap[1,\sqrt{ M_k}])}{\#(S\cap I_k)}\right|\\
&+\left|\frac{\#(S\cap [1,\sqrt{M_k}])-\#(S\cap I_k)}{\#(S\cap I_k)}\right|
\end{split}\]
converge to $0$ as $k\to\infty$.

Since $S$ has polynomial density with exponent $\alpha\geq1$, there exist constants $C_1>0$ and $\beta\geq0$ such that 
\[\begin{split}&\frac{1}{\#(S\cap I_k)}\leq  \left(\frac{C_1(Lt^{M_{k-1}}+\sqrt{M_k})^{1/\alpha}}{(\log(Lt^{M_{k-1}}+\sqrt{M_k}))^\beta}-Lt^{M_{k-1}}\right)^{-1}.\end{split}\]
Since $S$ has polynomial density with exponent $\alpha\geq1$ and $\overline{d}(S_*|S)=0$, 
there exist $C_2>0$ and for any $\varepsilon>0$ there exists $k_0\geq1$ such that for every $k\geq k_0$,
\[\#(S_*\cap I_k)\leq\#(S_*\cap[1,Lt^{M_{k-1}}+\sqrt{M_k}])\leq\varepsilon\cdot\frac{C_2(Lt^{M_{k-1}}+\sqrt{M_k})^{1/\alpha}}{(\log(Lt^{M_{k-1}}+\sqrt{M_k}))^\beta}.\]
Combining these two inequalities and then using 
 \eqref{MK-newnew}, we obtain
\begin{equation}\label{limzero1}\lim_{k\to\infty}\left|\frac{\#(A\cap I_k)}{\#(S\cap I_k)}-\frac{\#W_k}{\#(S\cap I_k)}\right|=0,\end{equation}
as required.

\begin{figure}
\begin{center}
\includegraphics[height=8cm,width=12cm]{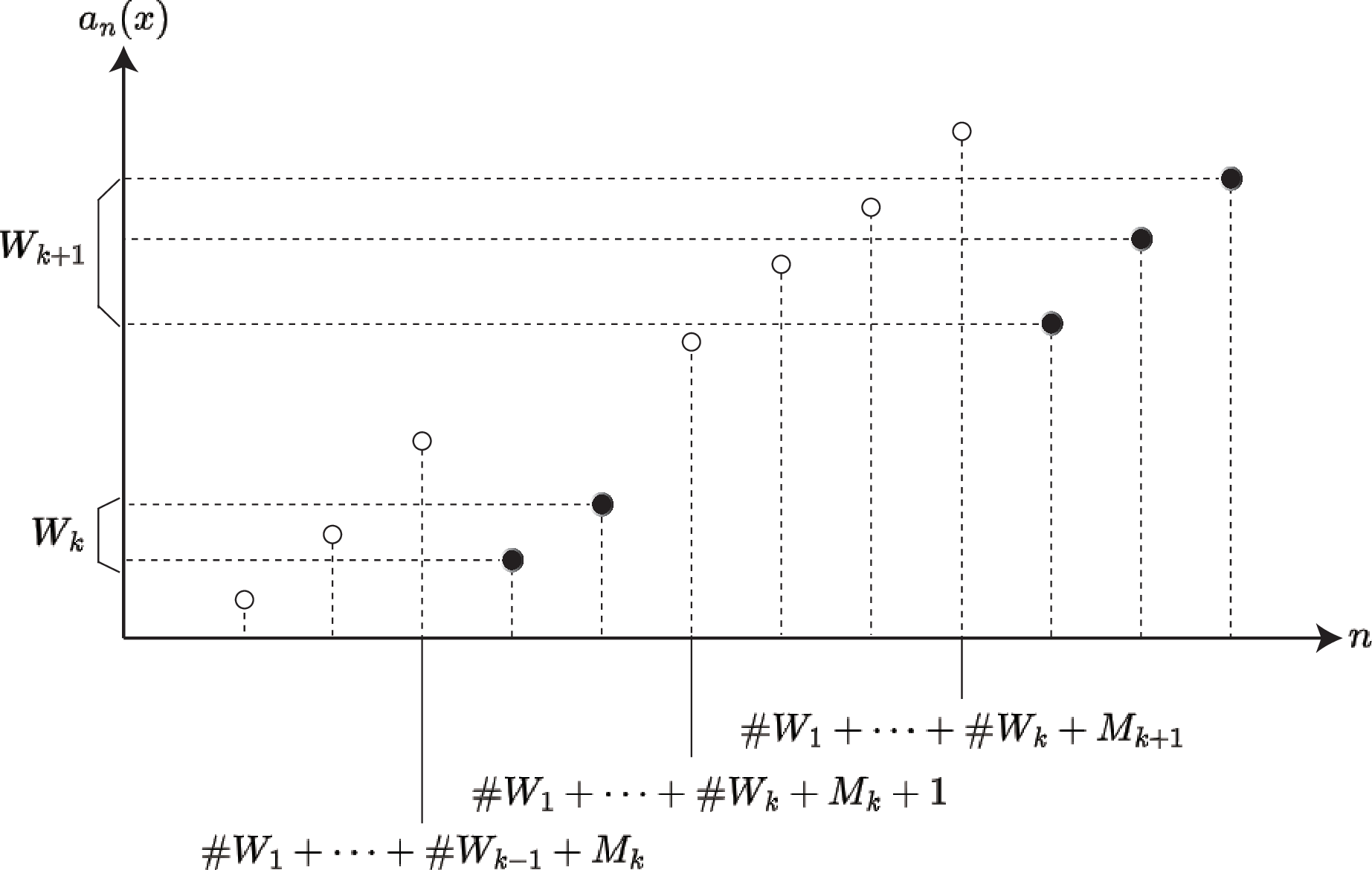}
\caption
{The white and black dots altogether indicate part of the graph of the map $n\in\mathbb N\mapsto a_n(x)\in\mathbb N$ for each $x\in E_{S,A}=R_{t,L}(S_*,A\setminus S_*,(M_k)_{k=1}^\infty,(W_k)_{k=1}^\infty)$. The black dots indicate the digits in $W_k$ (resp. $W_{k+1}$) inserted in between time $M_k$ and time $M_k+1$ (resp. between time $M_{k+1}$ and time $M_{k+1}+1$). }\label{fig1}
\end{center}
\end{figure}

Moving on to the second difference, for the numerator of the first fraction in the right-hand side we have
\[\begin{split}&|\#W_k-\#((A\setminus S_*)\cap[1,\sqrt{ M_k}])|\\
=&|\#((A\setminus S_*)\cap[Lt^{M_{k-1}}+1,\sqrt{ M_k}])+\#((A\setminus S_*)\cap[\sqrt{ M_k}+1,\sqrt{ M_k}+Lt^{M_{k-1}}])\\
&-\#((A\setminus S_*)\cap[1,Lt^{M_{k-1}}])-\#((A\setminus S_*)\cap[Lt^{M_{k-1}}+1,\sqrt{ M_k}])|\leq  Lt^{M_{k-1}}.\end{split}\]
For the numerator of the second fraction, a similar calculation shows
\[|\#(S\cap [1,\sqrt{M_k}])-\#(S\cap I_k)|\leq Lt^{M_{k-1}}.\]
Condition \eqref{MK-newnew} 
implies
\[\lim_{k\to\infty}\frac{\#W_k-\#((A\setminus S_*)\cap[1,\sqrt{ M_k}])}{\#(S\cap I_k)}=0\]
and
\[\lim_{k\to\infty}\frac{\#(S\cap[1,\sqrt{M_k}])-\#(S\cap I_k)}{\#(S\cap I_k)}=0.\]
Hence we obtain
\begin{equation}\label{limzero2}
\lim_{k\to\infty}\left|\frac{\#W_k}{\#(S\cap I_k)}-\frac{\#((A\setminus S_*)\cap[1,\sqrt{M_k}])}{\#(S\cap[1, \sqrt{M_k}])}\right|=0,\end{equation}
as required.

Combining \eqref{limzero1}, \eqref{limzero2} and then using  \eqref{dim-equality}, \eqref{up-eq1*},
\begin{equation}\label{reladens-PS*}
\begin{split}\lim_{k\to\infty}\frac{\#(A\cap I_k)}{\#(S\cap I_k)}&=
\lim_{k\to\infty}\frac{\#W_k}{\#(S\cap I_k)}=\overline{d}(A|S)>0.\end{split}\end{equation}
 This implies that
$W_k$ is non-empty for all sufficiently large $k$.
Since 
$\max I_k<\min I_{k+1}$ for every $k\geq1$ by \eqref{up-eq2*}, 
$\{W_k\colon k\in\mathbb N\}$ consists of pairwise disjoint, finite subsets of $A\setminus S_*$.
We set
\begin{equation}\label{target-def}E_{S,A}=R_{t,L}(S_*,A\setminus S_*,(M_k)_{k=1}^\infty,(W_k)_{k=1}^\infty).\end{equation}
The construction of this set was described in Section~\ref{insert-sec}.
For each $y\in R_{t,L}(S_*)$ 
we add the elements of $W_{k}$ into the digit sequence $(a_n(y))_{n=1}^\infty$ at position $M_k$ to define a new sequence. Then $E_{S,A}$ is the set of points whose regular continued fraction expansions are given by these new sequences.
It is clear from the construction that for any $x\in E_{S,A}$, the digit sequence $\{a_n(x)\}_{n\in\mathbb N}$ is not monotone increasing, as shown in \textsc{Figure}~\ref{fig1}.

We claim that $ E_{S,A}\subset E$. 
Indeed, let $x\in  E_{S,A}$ and let $y$ be the corresponding point in $R_{t,L}(S_*)$, namely 
$y=f(x)$.
Since $(a_n(y))_{n=1}^\infty$ is a strictly increasing sequence in $S_*$ and $\bigsqcup_{k=1}^\infty W_{k}\subset A\setminus S_*$, for all $m$, $n\in\mathbb N$ with $m\neq n$ we have $a_m(x)\neq a_n(x)$, namely $x\in E$.
By the construction of $E_{S,A}$, we have \[\#W_k\leq\#\left(\bigcup_{n\in\mathbb N}\bigcap_{x\in E_{S,A}}\{a_n(x)\}\cap A\cap I_k\right)\leq \#(A\cap I_k),\] and \eqref{reladens-PS*} implies
\begin{equation}\label{final}\lim_{k\to\infty}\frac{\#\left(\bigcup_{n\in\mathbb N}\bigcap_{x\in E_{S,A}}\{a_n(x)\}\cap A\cap I_k\right) }{\# (S\cap I_k)}=\overline{d}(A|S).\end{equation}

Since \eqref{holder-gen-eq0} and
\eqref{holder-ineq0} 
follow from \eqref{up-eq3*} and \eqref{up-eq4*} respectively,
the corresponding elimination map 
is almost Lipschitz by Proposition~\ref{holder-ex-lem1}. 
Lemma~\ref{Holder} gives
\begin{equation}\label{lower-dim-b}\dim_{\rm H}E_{S,A}\ge \dim_{\rm H}R_{t,L}(S_*)=\frac{1}{2\alpha}.\end{equation}
It is clear from the construction that $E_{S,A}\subset\{x\in E\colon\{a_n(x)\colon n\in\mathbb N\}\subset S\}$. 
Combining \eqref{lower-dim-b} and
Proposition~\ref{E-cor} we obtain \eqref{equation4}.

Since $S$ has polynomial density with exponent $\alpha\geq1$, there exist $C_1,C_2>0$ and $\beta\geq0$ such that for all sufficiently large $k\geq1$ we have 
\[\frac{\#(S\cap [1,\min I_k))}{\#(S\cap [1,\max I_k])}\leq C_1^{-1}C_2\frac{(\min I_k)^{1/\alpha}}{(\max I_k)^{1/\alpha}}\frac{(\log(\max I_k))^{\beta}}{(\log(\min I_k))^\beta}.\]
On the right-hand side, 
using $\lim_{k\to\infty}\min I_k=\infty$ and \eqref{I-limit} we have \[\lim_{k\to\infty}\frac{(\min I_k)^{1/\alpha}}{(\max I_k)^{1/\alpha}}\frac{(\log(\max I_k))^\beta}{(\log(\min I_k))^\beta}=\lim_{k\to\infty}\left(\frac{\min I_k}{\max I_k}\right)^{1/\alpha}\left(1+\frac{\log\frac{\max I_k}{\min I_k}}{\log\min I_k}\right)^\beta=0,\]
and hence \[\lim_{k\to\infty}\frac{\#(S\cap I_k)}{\#(S\cap [1,\max I_k])}=1-\lim_{k\to\infty}\frac{\#(S\cap [1,\min I_k)) }{\#(S\cap [1,\max I_k]) }=1.\]
Using this convergence and \eqref{final} 
 we obtain
\[\begin{split}\overline{d}\left(\bigcup_{n\in\mathbb N}\bigcap_{x\in E_{S,A}}\{a_n(x)\}\cap A|S\right)\geq&\limsup_{k\to\infty}\frac{\#\left(\bigcup_{n\in\mathbb N}\bigcap_{x\in E_{S,A}}\{a_n(x)\}\cap A\cap[1,\max I_k]\right) }{\#(S\cap[1,\max I_k])}\\
\geq&\limsup_{k\to\infty}\frac{\#\left(\bigcup_{n\in\mathbb N}\bigcap_{x\in E_{S,A}}\{a_n(x)\}\cap A\cap I_k\right) }{\#(S\cap[1,\max I_k])}\\
=&\lim_{k\to\infty}\frac{\#(S\cap I_k)}{\#(S\cap [1,\max I_k])}\\
&\times\lim_{k\to\infty}\frac{\#\left(\bigcup_{n\in\mathbb N}\bigcap_{x\in E_{S,A}}\{a_n(x)\}\cap A\cap I_k  \right)}{\# (S\cap I_k)}\\
=&\overline{d}(A|S).\end{split}\]
The reverse inequality $\overline{d}(\bigcup_{n\in\mathbb N}\bigcap_{x\in E_{S,A}}\{a_n(x)\}\cap A|S) \leq\overline{d}(A|S)$ is obvious.
We have verified \eqref{equation5}. Combining \eqref{equation4} and \eqref{equation5} yields \eqref{equation6}. The proof of Theorem~\ref{cor-FS-R} is complete.\qed

 \subsection{Proof of Theorem~\ref{cor-FS}(b) }\label{pf-sec}
 In principle we proceed along the line of Section~\ref{gen-sec}, but there are a number of technical differences.
 Let $S\subset\mathbb N$ satisfy
 $\overline{d}(S)=0$ and $d_{\rm B}(S)>0$. We set $M_0=N_0=0$ for convenience, and
 take two sequences $(M_k)_{k=1}^\infty$, $(N_k)_{k=1}^\infty$ of positive integers such that $(N_k)_{k=1}^\infty$ is  strictly increasing and  
 \[\lim_{k\to\infty}\frac{\#({S\cap[M_k,M_k+N_k]})}{N_k}=d_{\rm B}(S).\]
If
  $\sup_{k\geq1}M_k<\infty$ 
then we have $d_{\rm B}(S)=\overline{d}(S)$. This case is covered by Theorem~\ref{cor-FS}(a).
 In what follows we assume 
  $\sup_{k\geq1}M_k=\infty$.
 Taking subsequences of 
$(M_k)_{k=1}^\infty$ and 
$(N_k)_{k=1}^\infty$ if necessary, 
we may assume $(M_k)_{k=1}^\infty$ is strictly increasing and $M_k+N_k<M_{k+1}$ for every $k\geq1$. For each $k\in\mathbb N$ we put
\[I_{k}=[M_{k-1},M_{k-1}+N_{k-1}]\]
and
\[W_k=S\cap I_k.\]
We have $\lim_{k\to\infty}\#(\mathbb N\cap I_k)=\infty$ and 
\begin{equation}\label{ban-eq1} \lim_{k\to\infty}\frac{\#W_k}{\#(\mathbb N\cap I_k)}= d_{\rm B}(S).\end{equation}

Let $(\varepsilon_k)_{k=1}^\infty$ be a decreasing sequence of positive reals converging to $0$.
Taking subsequences of
$(M_k)_{k=1}^\infty$ and $(N_k)_{k=1}^\infty$ if necessary, we may assume 
 \begin{equation}\label{eq-100}\frac{1}{(M_{k-1}+N_{k-1}+1)^{2 N_{k-1} }}
 \geq\left(\prod_{i=M_{k-1}+1}^{M_{k}}\frac{1}{t^{2i}} \right)^{3\varepsilon_k}\ \text{ for every }k\geq1,\end{equation}
and
\begin{equation}\label{holMk}
\inf_{n\geq M_k}\prod_{i=1}^{n+1}\frac{1}{(t^{i+2})^2}\cdot\left(\prod_{i=1}^{n }\frac{1}{t^{2i}}\right)^{-1-\varepsilon_k}\geq2\ \text{ for every }k\geq1.\end{equation}
 Then 
\eqref{eq-100} implies
 \begin{equation}\label{eq-300}\prod_{i\in W_{k} }\frac{1}{(i+1)^{2}}\geq\left(\prod_{i=M_{k-1}+1}^{M_{k}}\frac{1}{t^{2i}} \right)^{3\varepsilon_k}\ \text{ for every }k\geq1\text{ with }S\cap I_k\neq\emptyset.\end{equation}

Since $\overline{d}(S)=0$, the set $\mathbb N\setminus S$ has polynomial density
with $\alpha=1$, $\beta=0$.
By Proposition~\ref{seed-Prop}, there exist a subset $(\mathbb N\setminus S)_*$ of $\mathbb N\setminus S$ not containing consecutive integers, and positive reals $t,L$ with $t>L>1$ and $t>2$ such that 
$R_{t,L}((\mathbb N\setminus S)_*)$ is an extreme seed set of Hausdorff dimension $1/2$. 
We set
\begin{equation}\label{target-def-F}F_S=R_{t,L}((\mathbb N\setminus S)_*,S,(M_k)_{k=1}^\infty,
(W_k)_{k=1}^\infty).\end{equation}
The construction of this set was described in Section~\ref{insert-sec}.
For each $y\in R_{t,L}((\mathbb N\setminus S)_*)$
we add the elements of $W_{k}$ into the digit sequence $(a_n(y))_{n=1}^\infty$ at position $M_k$ to define a new sequence. Then $F_{S}$ is the set of points whose regular continued fraction expansions are given by these new sequences.
It is clear from the construction that for any $x\in F_{S}$, the digit sequence $\{a_n(x)\}_{n\in\mathbb N}$ is not monotone increasing.

Since the intervals $I_k$, $k\geq1$ are pairwise disjoint and $(\mathbb N\setminus S)_*\cap S=\emptyset$,   we have 
$F_S\subset E$.
Since 
\eqref{holder-gen-eq0} and \eqref{holder-ineq0} follow from  \eqref{eq-300} and 
\eqref{holMk} respectively,
 the corresponding elimination map is almost Lipschitz by Proposition~\ref{holder-ex-lem1}. 
  Lemma~\ref{Holder} gives
\[\dim_{\rm H}F_S\ge \dim_{\rm H}R_{t,L}((\mathbb N\setminus S)_*)= \frac{1}{2}.\]

By the construction of $F_S$, we have \[\bigcup_{n\in\mathbb N}\bigcap_{x\in F_S}\{a_n(x)\}\cap W_k=W_k.\] 
Then \eqref{ban-eq1} yields
\[\begin{split}d_{\rm B}\left(\bigcup_{n\in\mathbb N}\bigcap_{x\in F_S}\{a_n(x)\}\cap S\right)&\geq\limsup_{k\to\infty}\frac{\#
\left(\bigcup_{n\in\mathbb N}\bigcap_{x\in F_S}\{a_n(x)\}\cap W_k\right) }{\# (\mathbb N\cap I_k)}\\
&=\lim_{k\to\infty}\frac{\#W_k}{\#(\mathbb N\cap I_k)}= d_{\rm B}(S).\end{split}\]
The reverse inequality 
$d_{\rm B}(\bigcup_{n\in\mathbb N}\bigcap_{x\in F_S}\{a_n(x)\}\cap S)\leq d_{\rm B}(S)$ is obvious. 
 We have verified \eqref{equation1'}, which implies \eqref{equation-1}.
 The proof of Theorem~\ref{cor-FS}(b) is complete.
\qed

\subsection{Proof of Theorem~\ref{J-thm}}\label{pfthmj}
Let $S$ be an infinite subset of $\mathbb N$.
By Lemma~\ref{disc2}, if $x\in E$ then
$\overline{d}(\{a_n(x)\colon n\in\mathbb N\}\cap S)>0$ implies
\[\tau(\{a_n(x)\colon n\in\mathbb N\}\cap S)= 1= \tau(\{a_n(x)\colon n\in\mathbb N\}).\]
Applying
Lemma~\ref{fang-eq} with $c=1$, we conclude that 
\[\dim_{\rm H}
\left\{x\in J\colon 
\ \overline{d}(\{a_n(x)\colon n\in\mathbb N\}\cap S)>0
\right\}=0.\] 
Since $\dim_{\rm H}J=1/2>0$ by \cite{R85} or 
 Proposition~\ref{E-cor},
the desired inequality in Theorem~\ref{J-thm}(a) holds.

Suppose $S$ has polynomial density with exponent $\alpha<2$. Lemma~\ref{disc2} gives $\tau(S)=1/\alpha$.
Let $A\subset S$ satisfy $\overline{d}(A|S)>0$. By Lemma~\ref{disc2},
 if $x\in E$ then
$\overline{d}(\{a_n(x)\colon n\in\mathbb N\}\cap A|S)>0$ implies
\[\tau(\{a_n(x)\colon n\in\mathbb N\}\cap A)= \frac{1}{\alpha}\leq\tau(\{a_n(x)\colon n\in\mathbb N\}).\]
If moreover $\{a_n(x)\colon n\in\mathbb N\}\subset S$, then we have
\[\tau(\{a_n(x)\colon n\in\mathbb N\})\leq\tau(S)=\frac{1}{\alpha}.\]
Further, applying
Lemma~\ref{fang-eq} with $c=1/\alpha$ we obtain
\[\begin{split}\dim_{\rm H}\left\{
\begin{tabular}{l}
\!\!\!$x\in J\colon$\\
\!\!\!$\{a_n(x)\colon n\in\mathbb N\}\subset S$,\!\!\!\\
\!\!\!$\overline{d}(\{a_n(x)\colon n\in\mathbb N\}\cap A |S)>0$\!\!\!
\end{tabular}
\right\}&\leq
\dim_{\rm H}\left\{x\in J\colon\tau(\{a_n(x)\colon n\in\mathbb N\})=\frac{1}{\alpha}\right\}\\
&\leq\frac{\alpha-1}{2\alpha}.\end{split}\]
Combining this inequality, the equality in Proposition~\ref{E-cor} and then using $\alpha<2$ yields the desired inequality in Theorem~\ref{J-thm}(b).\qed

\section{Some Generalizations}
In this last section, we generalize Theorems~\ref{cor-FS} and \ref{cor-FS-R}
to other expansions of real numbers. Proofs are essentially the same as that of Theorems~\ref{cor-FS} and \ref{cor-FS-R}, and hence will be omitted.

\subsection{Iterated Function Systems}
A countable collection of self-maps 
is called an {\it Iterated Function System (IFS)} \cite{Fal14}.  
Let $\Phi=\{\varphi_k\colon k\in\mathbb N\}$ be a collection of $C^1$ maps from $[0,1]$ to itself and let $d>1$. 
We say $\Phi$ is a {\it $d$-decaying  
IFS} \cite{JR12} if the following conditions hold:
\begin{itemize}
\item[(i)]there exists $m\in\mathbb N$ and  $\gamma\in (0, 1)$ such that for every $(a_1,\ldots, a_m)\in \mathbb N^{m}$ and every $x\in [0, 1]$ we have 
$0<|(\varphi_{a_1}\circ\cdots\circ \varphi_{a_m})^{\prime}(x)|\le \gamma;$
\item[(ii)]for all $i,j\in \mathbb N$ with $i\neq j$, $\varphi_{i}((0, 1))\cap \varphi_{j}((0,1))=\emptyset;$
\item[(iii)] for any $\varepsilon>0,$ there exist positive constants $c_1(\varepsilon)$, $c_2(\varepsilon)$ such that for every $n\in \mathbb N$ and every $x\in [0, 1]$ we have 
\[\frac{c_1(\varepsilon)}{k^{d(1+\varepsilon)}}\le |\varphi_k^{\prime}(x)|\le \frac{c_2(\varepsilon)}{k^{d(1-\varepsilon)}};\]
\item[(iv)]$\overline{\bigcup_{k=1}^{\infty}\varphi_k([0, 1])}=[0, 1]$, and if $i< j$ then $\varphi_i(x)>\varphi_j(x)$ for all $x\in [0, 1].$
\end{itemize}

If $\Phi=\{\varphi_k\colon k\in\mathbb N\}$  is a $d$-decaying IFS, then 
by (i) one can define a natural projection $\Pi\colon \mathbb N^{\mathbb N}\rightarrow [0, 1]$ by 
\[\Pi((a_n)_{n=1}^{\infty})=\lim_{n\to \infty}\varphi_{a_1}\circ\cdots\circ \varphi_{a_n}(1).\]
By (ii), $\Pi$ is one-to-one
 except on a countable number of points.
If $x\in [0, 1]$ and $\Pi^{-1}(x)$ is a singleton, then there exists a unique sequence $(a_n(x))_{n=1}^{\infty}$ of positive integers such that \[x=\Pi((a_n(x))_{n=1}^{\infty}).\]
This equation may be viewed as an expansion of $x$ with 
the digit sequence $(a_n(x))_{n=1}^{\infty}$.
As an analogue of the set $E$ for the regular continued fraction,
define 
$E(\Phi)$ to be the set of $x\in [0,1]$ such that $\Pi^{-1}(x)$ is a singleton and $a_{m}(x)\neq a_{n }(x)$ holds for all $m$, $n\in\mathbb N$ with $m\neq n$.
Theorems~\ref{cor-FS} and \ref{cor-FS-R} can be generalized to  expansions generated by arbitrary $d$-decaying IFSs as follows.

\begin{thm}\label{thmC1}
Let $d>1$ and let $\Phi$ be a $d$-decaying IFS. Let
$S\subset\mathbb N$. 

\begin{itemize}
\item[(a)]
If 
$d_{\rm B}(S)>0$, then
there exists $F_{S}(\Phi)\subset E(\Phi)$ 
such that \[ \dim_{\rm H}F_{S}(\Phi)=\dim_{\rm H}E(\Phi)\
\text{ and }\
d_{\rm B}\left(\bigcup_{n\in\mathbb N}\bigcap_{x\in F_S(\Phi)}\{a_n(x)\}\cap S\right)=d_{\rm B}(S).\]
In particular, 
\[ \dim_{\rm H}\left\{x\in E(\Phi)\colon d_{\rm B}(\{a_n(x)\colon n\in\mathbb N\}\cap S)=d_{\rm B}(S)\right\}=\dim_{\rm H}E(\Phi).\]

\item[(b)] If $S$ has polynomial density with exponent $\alpha\geq1$, then
for any $A\subset S$ with $\overline{d}(A|S)>0$ 
there exists a subset $E_{S,A}(\Phi)$ of $\{x\in E(\Phi) \colon \{a_n(x)\colon n\in\mathbb N\}\subset  S \}$ such that \[\dim_{\rm H}E_{S,A}(\Phi)=\dim_{\rm H}\{x\in E(\Phi)\colon \{a_n(x)\colon n\in\mathbb N\}\subset S\}=\frac{1}{d\alpha}\] and
\[\overline{d}\left(\bigcup_{n\in\mathbb N}\bigcap_{x\in E_{S,A}(\Phi)}\{a_n(x)\}\cap A|S\right)=\overline{d}(A|S).\]
In particular, 
\[\dim_{\rm H}\left\{
\begin{tabular}{l}
\!\!\!$x\in E(\Phi)\colon$\\
\!\!\!$\{a_n(x)\colon n\in\mathbb N\}\subset S$,\!\!\!\\
\!\!\!$\overline{d}(\{a_n(x)\colon n\in\mathbb N\}\cap A |S)$\!\!\!\\
\!\!\!$=\overline{d}(A|S)$\!\!\!
\end{tabular}
\right\}=\dim_{\rm H}
\left\{\begin{tabular}{l}
\!\!\!$x\in E(\Phi)\colon$\\
\!\!\!$\{a_n(x)\colon n\in\mathbb N\}\subset S$\!\!\!\end{tabular}
\right\}.\]
\end{itemize}
\end{thm}

The regular continued fraction is generated by the $2$-decaying IFS $\{\varphi_k\colon k\in\mathbb N\}$ defined by 
 $\varphi_k(x)= 1/(x+k)$ with $m=2$. Hence, Theorem~\ref{thmC1} is indeed a generalization of Theorems~\ref{cor-FS} and \ref{cor-FS-R}.
 Then, Theorems~\ref{cor-N}, \ref{TZ-thm}, \ref{cor-prime}, \ref{cor-P1}, \ref{cor-PS} and \ref{lower-cor} for the regular continued fraction are generalized to $d$-decaying IFSs accordingly.
Precise statements are omitted.

Regarding a proof of Theorem~\ref{thmC1},
 most of the arguments in the proofs of Theorems~\ref{cor-FS} and \ref{cor-FS-R} remain intact. 
Lemma~\ref{proper} is the only place where the specific formulas for the regular continued fraction were used.
For each $n\in\mathbb N$, the intervals $\varphi_{a_1}\circ\cdots\circ\varphi_{a_n}([0,1])$,
$(a_1,\ldots, a_n)\in\mathbb N^n$ play the role of the $n$-th fundamental intervals.
As a substitute for Lemma~\ref{proper}, one can use
(iii) to evaluate the diameters of these intervals.

\subsection{Semi-regular continued fractions}
Let
$\sigma=\{\sigma_n\}_{n=1}^\infty\in\{-1,1\}^\mathbb N$. Any 
number $x$ in the interval $(0,1)$ has the unique, finite or infinite continued fraction expansion of the form 
\begin{equation}\label{CF-general}
x=
\confrac{1 }{a_{\sigma,1}(x)} + \confrac{\sigma_1 }{a_{\sigma,2}(x)}  + \confrac{\sigma_{2} }{a_{\sigma,3}(x) }+\cdots,
\end{equation}
where 
 $a_{\sigma,n}(x)$ are positive integers such that $\sigma_n+a_{\sigma,n}(x)\geq1$
for 
every $n\geq1$ \cite{KKV17,NT}. This kind of continued fractions are called {\it semi-regular} \cite{IK,Kra91}. 
If $x$ is irrational then the continued fraction in \eqref{CF-general}
is infinite, which means that
the finite truncation
\[\cfrac{1}{a_{\sigma,1}(x)+\cfrac{\sigma_1 }{a_{\sigma,2}(x)+\cdots+\cfrac{\ddots}{\displaystyle{\frac{\sigma_{n-1}}{a_{\sigma,n}(x)}}}}}\]
converges to $x$ as $n\to\infty$.
If $\sigma_n=1$ for all $n\in\mathbb N$ (resp. $\sigma_n=-1$ for all $n\in\mathbb N$) then we get the regular (resp. backward) continued fraction expansion. 
For other examples and motivations for semi-regular continued fractions, see
\cite{IK,Kra91} and the references therein.

As an analogue of the set $E$ for the regular continued fraction,
for each $\sigma\in\{-1,1\}^{\mathbb N}$
let
$E_\sigma$ denote the set of $x\in (0,1)\setminus\mathbb Q$ such that  $a_{\sigma, m}(x)\neq a_{\sigma, n }(x)$ holds for all $m$ ,$n\in\mathbb N$ with $m\neq n$.
Theorems~\ref{cor-FS} and \ref{cor-FS-R} can be generalized to semi-regular continued fractions as follows.
\begin{thm}\label{thmC2}
Let $S\subset\mathbb N$.

\begin{itemize}
\item[(a)] If 
$d_{\rm B}(S)>0$, then for any $\sigma\in\{-1,1\}^\mathbb N$
there exists $F_{S,\sigma}\subset E_\sigma$ 
such that \[\dim_{\rm H}F_{S,\sigma}=\dim_{\rm H}E_\sigma\
\text{ and }\
d_{\rm B}\left(\bigcup_{n\in\mathbb N}\bigcap_{x\in F_{S,\sigma}}\{a_{n,\sigma}(x)\}\cap S\right)=d_{\rm B}(S).\]
In particular, 
\[\dim_{\rm H}\left\{x\in E_\sigma\colon d_{\rm B}(\{a_{n,\sigma}(x)\colon n\in\mathbb N\}\cap S)=d_{\rm B}(S)\right\}=\dim_{\rm H}E_\sigma.\]

\item[(b)] If $S$ has polynomial density with exponent $\alpha\geq1$, then
for any $A\subset S$ with $\overline{d}(A|S)>0$ and any $\sigma\in\{-1,1\}^\mathbb N$, 
there exists a subset $E_{S,A,\sigma}$ of $\{x\in E_\sigma \colon \{a_{\sigma,n}(x)\colon n\in\mathbb N\}\subset  S \}$ such that \[\dim_{\rm H}E_{S,A, \sigma}=\dim_{\rm H}\{x\in E_\sigma\colon \{a_{\sigma,n}(x)\colon n\in\mathbb N\}\subset S\}=\frac{1}{2\alpha}\] and
\[\overline{d}\left(\bigcup_{n\in\mathbb N}\bigcap_{x\in E_{S,A, \sigma}}\{a_{\sigma,n}(x)\}\cap A|S\right)=\overline{d}(A|S).\]
In particular, 
\[\dim_{\rm H}\left\{
\begin{tabular}{l}
\!\!\!$x\in E_\sigma\colon$\\
\!\!\!$\{a_{\sigma,n}(x)\colon n\in\mathbb N\}\subset S$,\!\!\!\\
\!\!\!$\overline{d}(\{a_{\sigma,n}(x)\colon n\in\mathbb N\}\cap A |S)$\!\!\!\\
\!\!\!$
=\overline{d}(A|S)$\!\!\!
\end{tabular}
\right\}=\dim_{\rm H}
\left\{\begin{tabular}{l}
\!\!\!$x\in E_\sigma\colon$\\
\!\!\!$\{a_{\sigma,n}(x)\colon n\in\mathbb N\}\subset S$\!\!\!\end{tabular}
\right\}.\]\end{itemize}
\end{thm}

By virtue of Theorem~\ref{thmC2}, Theorems~\ref{cor-N}, \ref{TZ-thm}, \ref{cor-prime}, \ref{cor-P1}, \ref{cor-PS} and \ref{lower-cor} are generalized to semi-regular continued fractions accordingly. Precise statements are omitted.

Regarding a proof of Theorem~\ref{thmC2},
 most of the arguments in the proofs of Theorems~\ref{cor-FS} and \ref{cor-FS-R} remain intact apart from Lemma~\ref{proper}.
 For each $\sigma\in\{-1,1\}^\mathbb N$,
$n\in\mathbb N$ and $(a_1,\ldots,a_n)\in\mathbb N$ such that $\sigma_i+a_{i}\geq1$
for $i=1,\ldots,n$, we define
\[I_\sigma(a_1,\ldots, a_n)=\{x\in (0, 1)\colon a_{\sigma,i}(x)=a_i\ \text{ for }i=1,\ldots,n\}.\]
These sets are non-degenerate intervals, and play the role of the $n$-th fundamental intervals.
The following can be used as a substitute for Lemma~\ref{proper}.

\begin{lem}\label{lem-last}
There exists a constant $C\geq1$ such that
if $\sigma\in\{-1,1\}^\mathbb N$, $n\in\mathbb N$, $(a_1,\ldots,a_n)\in\mathbb N^n$ are such that $\sigma_i+a_{i}\geq1$
and $a_i\geq3$ for $i=1,\ldots,n$ then 
\[C^{-1}\prod_{i=1}^n\frac{1}{(a_i+1)^2}\leq|I_\sigma(a_1,\ldots,a_n)|\leq C\prod_{i=1}^n\frac{1}{(a_i-1)^2}.\]\end{lem}
 
 Similarly to the case of the regular continued fraction, the endpoints of the intervals $I_\sigma(a_1,\ldots,a_n)$ can be written with the partial numerators and denominators of the semi-regular continued fraction expansion
(see \cite{Kra91}). 
Hence one can prove Lemma~\ref{lem-last} similarly to the proof of Lemma~\ref{proper}. Here we give a conceptually clearer,  
  dynamical alternative proof.

   \begin{proof}[Proof of Lemma~\ref{lem-last}]
Define two maps $T_1\colon x\in(0,1]\mapsto 1/x-\lfloor 1/x\rfloor$ and $T_{-1}\colon x\in(0,1]\mapsto \lfloor 1/x\rfloor-1/x+1$.
For each $k\in\mathbb N$, $T_1$ and $T_{-1}$ send the interval
$(\frac{1}{k+1},\frac{1}{k})$ diffeomorphically onto $(0,1)$.
They satisfy R\'enyi's condition
\[
 \sup_{\left(\frac{1}{k+1},\frac{1}{k}\right)}\frac{|T_1''|}{|T_1'|^2}\leq2\quad\text{for all }k\in\mathbb N \ \text{ and }\sup_{\left(\frac{1}{k+1},\frac{1}{k}\right)}\frac{|T_{-1}''|}{|T_{-1}'|^2}\leq2\quad\text{for all }k\in\mathbb N.
 \]

Write $T_\sigma^n=T_{\sigma_n}\circ \cdots\circ T_{\sigma_1}$,  and for convenience set $T_\sigma^0$ to be the identity map on $(0,1)$. Let $\theta$ denote the left shift acting on $\{-1,1\}^{\mathbb N}$. We have
$T_\sigma^i(I_\sigma(a_1,\ldots,a_n))=I_{\theta^i\sigma}(a_{i+1},\ldots,a_n)$
for $i=1,\ldots,n-1$, and
$T_\sigma^n$ sends $I_\sigma(a_1,\ldots,a_n)$ diffeomorphically onto $(0,1)$. 
For all $x\in I_\sigma(a_1,\ldots,a_n)$ and $i=1,\ldots,n$ we have
\begin{equation}\label{derivative}(a_i-1)^2\leq |(T_{\sigma_i})'T_\sigma^{i-1} x|\leq(a_i+1)^2.\end{equation} 
Since $a_1,\ldots,a_n\geq3$, combining the first inequality in \eqref{derivative} and
 R\'enyi's condition we obtain a constant
 $C\geq1$ independent of $\sigma$, $n$ such that 
\[\sup_{x,y\in I_{\sigma}(a_1,\ldots,a_n)}\frac{|(T_\sigma^n)'y|}{|(T_\sigma^n)'x|}\leq C.\]
By the mean value theorem, there exists $x\in I_{\sigma}(a_1,\ldots,a_n)$ such that $C^{-1}|(T_\sigma^n)'x|^{-1}\leq|I_\sigma(a_1,\ldots,a_n)|\leq C|(T_\sigma^n)'x|^{-1}$.
The chain rule gives
$|(T^n_\sigma)'x|=\prod_{i=1}^n|(T_{\sigma_i})'T_\sigma^{i-1} x|$.
Using \eqref{derivative} we obtain the desired double inequalities.
\end{proof}

\appendix \def\thesection{\Alph{section}}
\section{Bergelson-Leibman's theorem in $\mathbb N$}\label{BL-appendix}
This appendix gives the proof of Theorem~\ref{BL-thm}. It relies on Furstenberg's correspondence principle and the following multiple recurrence theorem with polynomial times.
Recall that $\mathbb Z_0[X]$ denotes the set of polynomials with integer coefficients that vanish at $X=0$.
 \begin{thm}[{\cite[Theorem~${\rm A}_0$]{BL}}]\label{BL-thmA} Let $(X,\mathcal B,\mu)$ be a probability space, let $T_1,\ldots,T_\ell$ be a finite collection of commuting invertible measure preserving transformations on $X$, let $h_1,\ldots,h_\ell\in\mathbb Z_0[X]$, and let $A\in\mathcal B$ satisfy $\mu(A)>0$. Then
 \[\liminf_{N\to\infty}\frac{1}{N}
 \sum_{m=0}^{N-1}\mu(T_1^{-h_1(m)}(A)\cap T_2^{-h_2(m)}(A)\cap\cdots\cap T_\ell^{-h_\ell(m)}(A))>0.\]
\end{thm}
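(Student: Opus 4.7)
The proof is by the \emph{PET (Polynomial Ergodic Theorem) induction} of Bergelson, which reduces polynomial multiple recurrence to the linear multiple recurrence theorem of Furstenberg--Katznelson for commuting measure-preserving transformations. First I would recast the assertion in Hilbert-space form: with $f=\mathbf 1_A$ and the multilinear Ces\`aro average
\[A_N(f_1,\ldots,f_\ell)=\frac{1}{N}\sum_{m=0}^{N-1}\prod_{i=1}^\ell T_i^{h_i(m)}f_i,\]
the desired inequality becomes $\liminf_{N\to\infty}\langle A_N(f,\ldots,f),f\rangle>0$, so the goal is to bound this inner product from below by a positive quantity depending only on $\mu(A)$ and the polynomials $h_i$.

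Next I would set up the induction. To each finite family $\mathcal H=(h_1,\ldots,h_\ell)$ of polynomials in $\mathbb Z_0[X]$ one attaches a \emph{PET weight vector} $w(\mathcal H)$ that encodes, degree by degree, the number of inequivalent leading behaviours of the $h_i$; these weight vectors are well-ordered lexicographically. The base case is the purely linear one ($\deg h_i=1$ for all $i$), where the substitution $S_i=T_i^{h_i(1)}$ produces a new family of commuting invertible measure-preserving transformations and reduces the assertion to Furstenberg--Katznelson's multidimensional Szemer\'edi theorem, which yields the positive lower bound. The inductive step invokes the van der Corput lemma
\[\limsup_{N\to\infty}\Bigl\|\tfrac{1}{N}\sum_{n=0}^{N-1}u_n\Bigr\|^{2}\leq\limsup_{K\to\infty}\tfrac{1}{K}\sum_{k=0}^{K-1}\limsup_{N\to\infty}\tfrac{1}{N}\sum_{n=0}^{N-1}\langle u_{n+k},u_n\rangle\]
applied to $u_n=\prod_{i=1}^\ell T_i^{h_i(n)}f_i$. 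Expanding the inner product inside the lemma rewrites the right-hand side as a Ces\`aro average over $k$ of multilinear averages associated with the \emph{extended} family $\mathcal H_k=\{h_i(X+k)-h_j(X),\,h_i(X)\colon 1\leq i,j\leq\ell\}$. A careful choice of pivot (subtracting the polynomial of maximal degree and leading coefficient) ensures $w(\mathcal H_k)<w(\mathcal H)$ for almost every $k$, so the induction hypothesis applies, and after finitely many iterations the family becomes linear.

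The main obstacle is the combinatorial bookkeeping needed to define the PET weight correctly and to verify that van der Corput strictly decreases it under the right pivot; this is the technical core of \cite{BL}. A secondary difficulty is that van der Corput controls only $L^2$ \emph{norms}, whereas the conclusion is the positivity of an \emph{inner product}: one must therefore split each $f_i$ as $\mathbb E(f_i\mid\mathcal Z)+(f_i-\mathbb E(f_i\mid\mathcal Z))$ for an appropriate characteristic factor $\mathcal Z$, show via the PET reduction that the orthogonal-complement terms contribute negligibly, and then obtain positivity on $\mathcal Z$ from the Furstenberg--Katznelson base case together with Jensen's inequality applied to $\int(\mathbb E(f\mid\mathcal Z))^\ell\,d\mu\geq\mu(A)^\ell$. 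Since a self-contained proof would occupy many pages, the strategy adopted in this appendix is to invoke Theorem~\ref{BL-thmA} as a black box from \cite{BL} and combine it with Furstenberg's correspondence principle to deduce Theorem~\ref{BL-thm}.
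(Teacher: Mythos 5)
The paper does not prove Theorem~\ref{BL-thmA}: it is stated verbatim as Theorem~${\rm A}_0$ of \cite{BL} and invoked as a black box inside the proof of Theorem~\ref{BL-thm}, which is where Furstenberg's correspondence principle actually enters. Your proposal lands in exactly the same place --- your closing sentence says precisely that the appendix should cite the result rather than reprove it --- so at the level the paper operates, the approaches coincide. Your preliminary sketch of the Bergelson--Leibman argument is a serviceable overview of PET induction (weight vectors ordered lexicographically, van der Corput downgrading the weight, reduction to the linear Furstenberg--Katznelson base case), but two caveats are worth noting in case you ever flesh it out: the splitting of $f$ through a characteristic factor $\mathcal Z$ is a later Host--Kra-flavoured device and is not how \cite{BL} passes from $L^2$-norm control to positivity of the inner product --- the original proof works instead with IP-limits and a more direct positivity mechanism --- and the assertion that $w(\mathcal H_k)<w(\mathcal H)$ for almost every $k$ hides exactly the equivalence-class bookkeeping you flag as the technical core, since the weight only drops after grouping the new polynomials $h_i(X+k)-h_j(X)$ and $h_i(X)$ by leading behaviour. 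None of this affects the appendix, which correctly quotes the theorem rather than reproving it.
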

\subsection{Proof of Theorem~\ref{BL-thm}}Let $S\subset\mathbb N$ satisfy $d_{\rm B}(S)>0$.
Let $\{0,1\}^\mathbb Z$ denote the two-sided Cartesian product space endowed with the product topology of the discrete topology on $\{0,1\}$.
Define $x^*=(x_i^*)_{i=-\infty}^\infty\in\{0,1\}^\mathbb Z$ by
\[x_i^*=\begin{cases}1 &\text{if $i\geq1$ and $i\in S$,}\\
0 &\text{otherwise.}\end{cases}\]
Let $\theta$ denote the left shift acting on $\{0,1\}^\mathbb Z$ and let
$\Sigma=\overline{\{\theta^i(x^*)\colon i\in\mathbb Z\}}$. Then $\Sigma$ is a compact metrizable space and  $\theta(\Sigma)=\Sigma$. Let
$A=\{(y_i)_{i=-\infty}^{\infty}\in \Sigma\colon y_0=1\}$. 
Let $(I_k)_{k=1}^\infty$ be a sequence of compact intervals in $[1,\infty)$ such that $\lim_{k\to\infty}\# (\mathbb N\cap I_k)=\infty$ and
\[d_{\rm B}(S)=\lim_{k\to\infty}\frac{\#(S\cap I_k)}{\# (\mathbb N\cap I_k)}.\]
For each $k\geq1$, define a Borel probability measure $\mu_k$ on $\Sigma$ by
\[\mu_k=\frac{1}{\#(\mathbb N\cap I_k)}\sum_{i\in \mathbb N\cap I_k}\delta_{\theta^i(x^*)},\]  where $\delta_{\theta^i(x^*)}$ denotes the unit point mass at $\theta^i(x^*)$. 
Note that
$\lim_{k\to\infty}\mu_k(A)=d_{\rm B}(S)$.
Pick a limit point of the sequence $(\mu_k)_{k=1}^\infty$ in the weak* topology and denote it by $\mu$. Then
 $\mu$ is $\theta|_\Sigma$-invariant.
Since $A$ is a clopen set, we have
$\mu(A)=\lim_{k\to\infty}\mu_k(A)$ and so $\mu(A)=d_{\rm B}(S)>0$.

For each $m\in\mathbb Z$ define \[S-m=\{i\in\mathbb N\colon i+m\in S\}.\]
Let $k\geq1$ and $m_1,\ldots,m_\ell\in\mathbb Z$. 
Note that
$i\in I_k\cap S\cap(S-m_1)\cap\cdots\cap(S-m_\ell)$ if and only if
$\theta^i(x^*)\in A\cap \theta^{-m_1}(A)\cap\cdots\cap\theta^{-m_\ell}(A)$. This correspondence yields
\[\begin{split}\frac{\#(I_k\cap S\cap(S-m_1)\cap\cdots\cap(S-m_\ell))}{\#(\mathbb N\cap I_{k})}=\mu_k(A\cap \theta^{-m_1}(A)\cap\cdots\cap \theta^{-m_\ell}(A)).\end{split}\]
Hence
\[\begin{split}d_{\rm B}(S\cap(S-m_1)\cap\cdots\cap(S-m_\ell ))&\geq\lim_{k\to\infty}\frac{\#(I_k\cap S\cap(S-m_1)\cap\cdots\cap(S-m_\ell))}{\#(\mathbb N\cap I_{k})}\\
&=\lim_{k\to\infty}\mu_k(A\cap \theta^{-m_1}(A)\cap\cdots\cap \theta^{-m_\ell}(A))\\
&=\mu(A\cap \theta^{-m_1}(A)\cap\cdots\cap \theta^{-m_\ell}(A)).\end{split}\]
The first inequality follows from the definition of upper Banach density. The second equality holds because $A\cap \theta^{-m_1}(A)\cap\cdots\cap \theta^{-m_\ell}(A)$ is a clopen set.
Let $h_1,\ldots,h_\ell\in\mathbb Z_0[X]$. Replacing $m_j$ by $h_j(m)$, $m\in\mathbb N$ for $j=1,\ldots,\ell$ we have
\[d_{\rm B}(S\cap(S-h_1(m))\cap\cdots\cap(S-h_\ell(m) ))\geq\mu(A\cap \theta^{-h_1(m)}(A)\cap\cdots\cap \theta^{-h_\ell(m)}(A)).\]
Applying Theorem~\ref{BL-thmA} to the invertible $\mu$-preserving system $(\Sigma,\theta|_\Sigma)$, we obtain
\[d_{\rm B}(S\cap(S-h_1(m))\cap\cdots\cap(S-h_\ell(m) ))>0\] for infinitely many $m\in\mathbb N$.
This yields the desired statement.\qed

\section{Arithmetic progressions in ${\rm PS}(\alpha)$ }\label{pfthmSY}
This appendix gives the proof of Theorem~\ref{thm-SY} for the sets obtained by the Piatetski-Shapiro sequences. Recall that ${\rm PS}(\alpha)=\{\lfloor n^\alpha\rfloor\colon n\in\mathbb N\}$ for $\alpha>1$. 
\subsection{Proof of Theorem~\ref{thm-SY}}Let $1<\alpha<2$.
Let $A\subset {\rm PS}(\alpha)$ satisfy $\overline{d}(A|{\rm PS}(\alpha))>0$. There exists a strictly increasing sequence $(N_k)_{k=1}^\infty$ of positive integers such that
for every $k\geq1$,
\[\frac{\#(A\cap[1,N_k])}{ \#({\rm PS}(\alpha)\cap[1,N_k])}\geq\frac{\overline{d}(A|{\rm PS}(\alpha))}{2}.\]
Set $B=\{n\in\mathbb N\colon \lfloor n^\alpha\rfloor\in A\}$.
For every $k\geq1$ we have
\[\frac{\#(B\cap[1,\#({\rm PS}(\alpha)\cap[1,N_k]) ])}{\#({\rm PS}(\alpha)\cap[1,N_k]) }=\frac{\#(A\cap[1,N_k])}{ \#({\rm PS}(\alpha)\cap[1,N_k]) }\geq \frac{\overline{d}(A|{\rm PS}(\alpha))}{2}>0.\]  This implies
$d_{\rm B}(B)\geq\overline{d}(B)>0$. 
By \cite[Corollary~5]{SY19},  $\{(n,\lfloor n^\alpha\rfloor)\in\mathbb N^2\colon n\in B\}$ contains an $\ell$-term arithmetic progression in $\mathbb N^2$ for every $\ell\geq3$, that is, for every $\ell\geq3$ there exist $n_1,\ldots,n_\ell\in\mathbb N$ and $m_1,m_2\in\mathbb N$ such that $n_k=n_1+m_1(k-1)$ and $\lfloor n_k^\alpha\rfloor=\lfloor n_1^\alpha\rfloor+m_2(k-1)$
for $k=2,\ldots,\ell$.
Therefore, $A=\{\lfloor n^\alpha\rfloor\colon n\in B\}$ contains an $\ell$-term arithmetic progression for every $\ell\geq3$.\qed

\subsection*{Acknowledgments} We thank anonymous referees for their careful readings of the manuscript and giving valuable comments and suggestions. We thank Yasushi Nagai and Kota Saito for their careful readings of an earlier version of the draft. 
We thank Nima Alibabaei, Atsushi Katsuda, Shunsuke Usuki for fruitful discussions. YN was supported by the JSPS KAKENHI 25K17282, Grant-in-Aid for Early-Career Scientists. HT was supported by the JSPS KAKENHI 25K21999, Grant-in-Aid for Challenging Research (Exploratory).


\begin{thebibliography}{99}


\bibitem{BHMF00} V. Bergelson, B. Host, R. McCutcheon, F. Parreau, Aspects of uniformity in recurrence, Colloq. Math. 84/85 (2000) part 2 549--576. 



  \bibitem{BL} V. Bergelson, A. Leibman, Polynomial extensions of Van der Waerden's and Szemer\'edi's theorems, J. Amer. Math. Soc. 9 (3) (1996) 725--753. 



\bibitem{DM97} H. Darmon, L. Merel, Winding quotients and some variants of Fermat's last theorem,
J. Reine Angew. Math. 490 (1997) 81--100. 

\bibitem{DaSi24} T. Das, D.S. Simmons, Exact dimension functions of the prime continued fraction Cantor set,  Ergod. Theory Dyn. Syst. 45 (6) (2025) 1757--1776. 

\bibitem{D52} P. D\'enes, \"Uber die Diophantische Gleichung $x^l+y^l=cz^l$, Acta Math. 88 (1952) 241--251. 

\bibitem{D66} L.E. Dickson,
History of the Theory of Numbers. Vol. II: Diophantine Analysis. Chelsea Publishing Co., New York,  1966.   


\bibitem{ET36} P. Erd\H{o}s, P. Tur\'an, On some sequences of integers, J. London Math. Soc. 11 (4) (1936), 261--264. 





 
 \bibitem{Fal14} K. Falconer, Fractal geometry. Mathematical foundations and applications, Third edition. John Wiley $\&$ 
 Sons, Ltd., Chichester, 2014.

 \bibitem{FMSW21} L. Fang, J. Ma, K. Song, M. Wu,  Multifractal analysis of the convergence exponent in continued fractions, Acta Math. Sci. Ser. B. 41 (6) (2021) 1896--1910.  


 \bibitem{Fur77} H. Furstenberg, Ergodic behavior of diagonal measures and a theorem of Szemer\'edi on arithmetic progressions, J. Analyse Math. 31 (1977) 204--256. 

\bibitem{Fur81} 
 H. Furstenberg, Recurrence in ergodic theory and combinatorial number theory, M. B. Porter Lectures. Princeton University Press, Princeton, NJ. 1981.


\bibitem{FK78} H. Furstenberg, Y. Katznelson, An ergodic Szemer\'edi theorem for commuting transformations, J. Analyse Math. 34 (1978) 275--291 (1979). 

 

\bibitem{GHWW}G. Gonz\'alez Robert, M. Hussain, B. Ward, L. White, Continued fractions with large prime partial quotients, arXiv: 2507.17167. 

\bibitem{G} I.J. Good, The fractional dimensional theory of continued fractions,  Proc. Camb. Philos. Soc.  37 (1941) 199--228. 


\bibitem{GT08} B. Green, T. Tao, The primes contain arbitrarily long arithmetic progressions, Ann. of Math. 167 (2) (2008) 481--547. 


\bibitem{Hir73} K.E. Hirst, Continued fractions with sequences of partial quotients,  Proc. Amer. Math. Soc. 38 (1973), 221--227. 

\bibitem{IK} M. Iosifescu, C. Kraaikamp, Metrical theory of continued fractions, Mathematics and its Applications, 547. Kluwer Academic Publishers, Dordrecht, 2002. 

\bibitem{I72} H. Iwaniec, Primes of the type $\phi(x,y)+A$ where $\phi$ is a quadratic form, Acta Arith. 21 (1972) 203--234. 

\bibitem{JR12} T. Jordan, M. Rams, Increasing digit subsystems of infinite iterated function systems,  Proc. Amer. Math. Soc. 140 (4) (2012) 1267--1279. 



\bibitem{KKV17} C. Kalle, T. Kempton, 
 E. Verbitskiy, The random continued fraction transformation,
Nonlinearity 30 (3) (2017) 1182--1203.


 \bibitem{Khi64} A.Y. Khinchin, Continued Fractions, University of Chicago Press,
Chicago, III.-London, 1964. 

\bibitem{Kra91}  C. Kraaikamp, A new class of continued fraction expansions, Acta Arith. 57 (1) (1991) 1--39.  

\bibitem{Lin60} J.V. Linnik, An asymptotic formula in an additive problem of
Hardy-Littlewood, Izv. Akad. Nauk SSSR Ser. Mat. 24 (1960) 629--706.  




\bibitem{Mat07} K. Matom\"aki, Prime numbers of the form $p=m^2+n^2+1$ in short intervals, Acta Arith. 128 (2) (2007) 193--200. 

\bibitem{MW99}
R.D. Mauldin, M. Urba\'nski, 
 Conformal iterated function systems with applications to the geometry of continued
fractions, Trans. Amer. Math. Soc. 351 (12) (1999), 4995--5025. 

\bibitem{MW88} R.D. Mauldin, S.C. Williams,
Hausdorff dimension in graph directed constructions, Trans. Amer. Math. Soc. 309 (2) (1988), 811--829. 



\bibitem{MV07} H.L. Montgomery, R.C. Vaughan,  Multiplicative Number Theory. I. Classical Theory, 
Cambridge Studies in Advanced Mathematics, 97. Cambridge University Press,
2007. 

\bibitem{Mo46} P.A.P. Moran,
Additive functions of intervals and Hausdorff measure,
Proc. Camb. Philos. Soc. 42 (1946) 15--23. 


\bibitem{Mo71} Y. Motohashi, On the distribution of prime numbers which are of the form ``$x^2+y^2+1$''. II, Acta Math. Acad. Hungar. Sci. 
22 (1971/72) 207--210. 

\bibitem{NT}  Y. Nakajima, H. Takahasi,   Hausdorff dimension of sets with restricted, slowly growing partial quotients in semi-regular continued fractions, J. Math. Soc. Japan 77 (3) (2025) 903--916.  



\bibitem{P97} Y.B. Pesin,
Dimension Theory in Dynamical Systems. 
Contemporary views and applications. Chicago Lectures in Mathematics. University of Chicago Press, Chicago, IL, 1997. 

\bibitem{PW97} Y. Pesin, H. Weiss,
 A multifractal analysis of equilibrium measures for conformal expanding maps
and Moran-like geometric constructions,
J. Stat. Phys. 86 (1-2) (1997) 233--275. 

\bibitem{PS53} I.I. Piatetski-Shapiro, On the distribution of prime numbers in sequences of the form $[f(n)]$, Mat. Sbornik N. S. 33 (75) (1953) 559--566. 

\bibitem{PS72} G. P\'olya, G. Szeg\H{o}, Problems and Theorems in Analysis I,  Berlin: Springer-Verlag, 1972.

\bibitem{R85} G. Ramharter, Eine Bemerkung \"uber gewisse Nullmengen von Kettenbr\"uchen, Ann. Univ. Sci. Budapest. E\"otv\"os Sect. Math. 28 (1985) 11--15 (1986).


\bibitem{RW01} J. Rivat, J. Wu, Prime numbers of the form $[n^c]$, Glasg. Math. J. 43 (2)  (2001) 237--254.




\bibitem{SY19} K. Saito, Y. Yoshida,
Arithmetic progressions in the graphs of slightly curved sequences, J. Integer Seq. 22 (2) (2019) Art. 19.2.1, 25pp.




\bibitem{SZ23} T.I. Schindler, R. Zweim\"uller,
Prime numbers in typical continued fraction expansions, 
Boll. Unione Mat. Ital. 16 (2) (2023) 259--274.


\bibitem{SP19} Y.-C. Sun, H. Pan,
The Green-Tao theorem for primes of the form $x^2+y^2+1$, Monatshefte Math. 189 (4) (2019) 715--733.  

\bibitem{S75} E. Szemer\'edi, On sets of integers containing no $k$ elements in arithmetic progression, Acta Arith. 27 (1975) 199--245. 



\bibitem{TZ}  T. Tao, T.  Ziegler, The  primes contain arbitrarily long polynomial progressions, Acta Math. 201 (2) (2008) 213--305. 



\bibitem{Tera18} J. Ter\"av\"ainen, The Goldbach problem for primes that are sums of two squares plus one, Mathematika 64 (1) (2018) 20--70.





\bibitem{TW}X. Tong, B.-W. Wang, How many points contain arithmetic progressions in their continued fraction expansion? Acta Arith. 139 (4) (2009) 369--376. 

\bibitem{Wae27} B.L. Van der Waerden, Beweis einer Baudetschen Vermutung, Nieuw Arch. Wiskd., II. Ser., 15 (1927) 212--216.


\bibitem{WW08} B.-W. Wang, J. Wu,  A problem of Hirst on continued fractions with sequences of partial quotients, Bull. Lond. Math. Soc. 40 (1) (2008) 18--22. 

\bibitem{Wu98} J. Wu, Primes of the form $p=1+m^2+n^2$ in short intervals, Proc. Amer. Math. Soc. 126 (1) (1998) 1--8. 


\bibitem{ZC2}Z.-L. Zhang, C.-Y. Cao, On points with positive density of the digit sequence in infinite iterated function systems, J. Aust. Math. Soc. 102 (3) (2017) 435--443.


 
 
 
 








\end{thebibliography}
\end{document}